\pdfoutput=1
\documentclass{amsart}
\usepackage{amsthm, amssymb,mathrsfs,diagrams,fullpage,enumerate,mathtools}
\usepackage[breaklinks,pagebackref]{hyperref}

\newcommand{\into}{\hookrightarrow}
\renewcommand{\le}{\leqslant}
\renewcommand{\ge}{\geqslant}
\DeclareMathOperator{\myIm}{Im}
\renewcommand{\Im}{\myIm}

\newcommand{\QQ}{\mathbf{Q}}

\newcommand{\CC}{\mathbf{C}}
\newcommand{\ZZ}{\mathbf{Z}}
\newcommand{\Qp}{\QQ_p}
\newcommand{\Zp}{\ZZ_p}
\renewcommand{\SS}{\mathbf{S}}
\renewcommand{\AA}{\mathbf{A}}

\newcommand{\cF}{\mathcal{F}}
\newcommand{\cG}{\mathcal{G}}
\newcommand{\cH}{\mathcal{H}}
\newcommand{\cO}{\mathcal{O}}
\newcommand{\cT}{\mathcal{T}}
\newcommand{\cU}{\mathcal{U}}
\newcommand{\cV}{\mathcal{V}}
\newcommand{\cX}{\mathcal{X}}
\newcommand{\cY}{\mathcal{Y}}

\newcommand{\fp}{\mathfrak{p}}
\newcommand{\fq}{\mathfrak{q}}
\newcommand{\fN}{\mathfrak{N}}
\newcommand{\fd}{\mathfrak{d}}

\newcommand{\fm}{\mathfrak{m}}

\DeclareMathOperator{\GL}{GL}

\DeclareMathOperator{\Fil}{Fil}

\DeclareMathOperator{\syn}{syn}
\DeclareMathOperator{\Eis}{Eis}

\DeclareMathOperator{\Tr}{Tr}
\DeclareMathOperator{\image}{image}

\newcommand{\dR}{\mathrm{dR}}
\newcommand{\rig}{\mathrm{rig}}
\newcommand{\ord}{\mathrm{ord}}
\newcommand{\et}{\text{\textup{\'et}}}
\newcommand{\FP}{\mathrm{fp}}

\newtheorem{definition}{Definition}[subsection]
\newtheorem{propdef}[definition]{Proposition-Definition}
\newtheorem{proposition}[definition]{Proposition}
\newtheorem{lemma}[definition]{Lemma}
\newtheorem{corollary}[definition]{Corollary}
\newtheorem{conjecture}[definition]{Conjecture}

\newtheorem{theorem}{Theorem}

\theoremstyle{remark}
\newtheorem{remark}[definition]{Remark}
\newtheorem{note}[definition]{Note}
\newtheorem{notation}[definition]{Notation}

\begin{document}

\author[D.~Loeffler]{David Loeffler}
\address[Loeffler]{Mathematics Institute\\
Zeeman Building, University of Warwick\\
Coventry CV4 7AL, UK}
\email{d.a.loeffler@warwick.ac.uk}

\author[C.~Skinner]{Christopher Skinner}
\address[Skinner]{Mathematics Department\\
Princeton University\\
Fine Hall, Washington Road\\
Princeton NJ 08544-1000, USA}
\email{cmcls@princeton.edu}

\author[S.L.~Zerbes]{Sarah Livia Zerbes}
\address[Zerbes]{Department of Mathematics \\
University College London\\
Gower Street, London WC1E 6BT, UK}
\email{s.zerbes@ucl.ac.uk}
\thanks{The first and the third authors are grateful to acknowledge support from the following grants: Royal Society University Research Fellowship (Loeffler); ERC Consolidator Grant ``Euler Systems and the Birch--Swinnerton-Dyer conjecture'' (Zerbes).}

\subjclass[2010]{11F41, 11F67, 11F80, 19F27}
\keywords{Hilbert modular forms, Asai $L$-functions, regulators, syntomic cohomology.}

\title{Syntomic regulators of Asai--Flach classes}

\begin{abstract}
 In this paper, we derive a formula for the $p$-adic syntomic regulators of Asai--Flach classes. These are cohomology classes forming an Euler system associated to a Hilbert modular form over a quadratic field, introduced in an earlier paper \cite{LLZ16} by Antonio Lei and the first and third authors. The formula we develop here is expressed in terms of differential operators acting on overconvergent Hilbert modular forms; it is analogous to existing formulae for the regulators of Beilinson--Flach classes, but a novel feature is the appearance of a projection operator associated to a critical-slope Eisenstein series. We conclude the paper with numerical calculations giving strong evidence for the non-vanishing of these regulators in an explicit example.
\end{abstract}

 \maketitle

 \section{Introduction}
  
  \subsection{Aims of the paper} Let $F$ be a real quadratic field and $p$ a prime split in $F$, $p \cO_F = \fp_1 \fp_2$.
  Let $\cF$ be a Hilbert modular newform over $F$, of level coprime to $p$ and weights $\ge 2$. Associated to $\cF$ is a 4-dimensional $p$-adic representation of $\operatorname{Gal}(\overline{\QQ} / \QQ)$ (the \emph{Asai Galois representation} of $\cF$), which is the tensor induction of the (perhaps more familiar) 2-dimensional representation of $\operatorname{Gal}(\overline{\QQ} / F)$ associated to $\cF$. The preceding paper \cite{LLZ16}, by Antonio Lei and the first and third authors, defines a collection of Galois cohomology classes (\emph{\'etale Asai--Flach classes}) for the Asai Galois representation, and proves that these form an Euler system; however, the question of whether this Euler system is non-trivial remains open in general. 
  
  The purpose of the present paper is to give an explicit formula for the $p$-adic syntomic Asai--Flach class, which is the image of the \'etale Asai--Flach class under the Bloch--Kato logarithm map. We express the pairing between the syntomic class $\operatorname{AJ}_{\syn}^{[\cF, j]}$ and the differential associated to $\cF$ using the theory of overconvergent modular forms. Our result is somewhat analogous to the formulae of \cite{BDR-BeilinsonFlach, KLZ1a} in the setting of Rankin--Selberg convolutions, although there are important differences, such as the lack of any immediate connection to $p$-adic $L$-functions. We use this to give very strong numerical evidence (although a little less than a fully rigorous proof) for the non-vanishing of the 3-adic Asai--Flach classes for an explicit example of a Hilbert modular eigenform over $\QQ(\sqrt{13})$.
    
  \subsection{Statement of results}
  
   We now state our results slightly more formally. Our first main result does not involve Hilbert modular forms at all, but is a result about the Eisenstein classes for $\GL_2 / \QQ$. Let $N \ge 1$ be coprime to $p$, and let $L$ be a $p$-adic field containing the $N$-th roots of unity.
   
   \begin{theorem}
    Let $k \ge 0$, and let $\chi: (\ZZ / N\ZZ)^\times \to L^\times$ be a Dirichlet character modulo $N$ with $\chi(-1) = (-1)^k$. If $k = 0$, assume $\chi$ is not trivial. Define the \emph{critical-slope Eisenstein quotient} as the unique 1-dimensional quotient of the space
    \[ S_{k+2}^\dag(N, L) / \theta^{k+1}\left(S_{-k}^\dag(N, L)\right) \]
    on which the Hecke operators $T(\ell) - 1 - \ell^{k+1} \chi(\ell)$ (for $\ell \nmid Np$), $U(\ell) - 1$ (for $\ell \mid N$), and $U(p) - p^{k+1} \chi(p)$ act as 0.
    
    If we identify $S_{k+2}^\dag(N, L) / \theta^{k+1}\left(S_{-k}^\dag(N, L)\right)$ with a rigid cohomology group as in Equation \eqref{eq:rigidcoho3} below, then the linear functional given by pairing with the $\chi$-isotypical part of the weight $k+2$ Eisenstein class factors through this quotient, and maps it isomorphically to $L$. Moreover, this linear functional maps the critical-slope Eisenstein eigenform $E^{(k+2)}_{\mathrm{crit}, \chi} \in S_{k+2}^\dag(N, L)$ to an explicit product of $p$-adic Dirichlet $L$-values.
   \end{theorem}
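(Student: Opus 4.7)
The overall approach is to identify the quotient $S_{k+2}^\dag(N,L)/\theta^{k+1}(S_{-k}^\dag(N,L))$ with a rigid cohomology group (as the theorem allows), analyse the Hecke structure of the Eisenstein part, and then carry out an explicit Coleman-integration computation to obtain the $L$-value formula.

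Under the identification with the relevant piece of $H^1_{\rig}(Y_1(N),\Sym^k\cH^\vee)$, the Hecke action transports to the quotient. I would first show that the Eisenstein system for $\chi$ contributes exactly one line on which $U(p)$ acts as $p^{k+1}\chi(p)$: this is the source of the $1$-dimensional critical-slope Eisenstein quotient. The complementary ordinary Eisenstein line does not appear because it sits in the complementary Hodge piece rather than the rigid quotient used here, in analogy with Coleman's classicality criterion (the ordinary stabilisation is classical and is detected by the Hodge filtration on $H^1_{\dR}$, not by the cokernel of $\theta^{k+1}$). Once this multiplicity-one statement is in place, Hecke-equivariance of the rigid realisation of the Eisenstein class, combined with the fact that the class is annihilated by the ``non-Eisenstein'' Hecke operators, automatically forces the linear functional obtained by pairing with the $\chi$-isotypical Eisenstein class to factor through the critical-slope quotient and send a generator to a nonzero element of $L$.

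The main work is the explicit evaluation on $E^{(k+2)}_{\mathrm{crit},\chi}$. My plan is to represent the rigid realisation of the Eisenstein class by an explicit closed rigid $1$-form $\omega_{\Eis}$ on $Y_1(N)^{\rig}$, construct a Coleman primitive of $\omega_{\Eis}$ on the complement of the supersingular discs, and then compute the pairing as a sum of residue-like contributions at the cusps in terms of the $q$-expansion coefficients of $E^{(k+2)}_{\mathrm{crit},\chi}$. After unwinding the definitions of partial Dirichlet $L$-values, the result should match a product of Kubota--Leopoldt values $L_p(\chi,-j)$ for appropriate $j$, exactly as in the standard $p$-adic interpolation of Eisenstein $q$-coefficients. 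The delicate point --- paralleling the Rankin--Selberg computations of \cite{BDR-BeilinsonFlach, KLZ1a}, though simpler because only a single form is involved --- is careful bookkeeping of the Euler factors at $p$: the choice of the \emph{critical-slope} stabilisation rather than the ordinary one is precisely what dictates which Euler factor appears (or is absent) in the final product of $L$-values. This is the step I expect to absorb most of the effort.
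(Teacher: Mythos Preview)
Your outline has the right ingredients (Hecke equivariance, residues at cusps, an Eisenstein primitive), but it misses the main structural step in the paper's argument and reverses the direction of one key construction.

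\textbf{The missing ``compactification'' step.} You identify $S_{k+2}^\dag/\theta^{k+1}(S_{-k}^\dag)$ with ``$H^1_{\rig}(Y_1(N),\Sym^k\cH^\vee)$'', but this is not the right group: it is $H^1_{\rig, c\text{-}\partial}(\bar Y^{\ord},\cH^{(k)})$, the cohomology of the \emph{ordinary locus} with compact support towards the cusps but not towards the supersingular points. The Eisenstein class $\Eis^k_{\rig,N}$ lives in $H^1_{\rig}(\bar Y,\cH^{[k]}(1))$, and Poincar\'e duality does not directly pair these two groups. The paper's first lemma constructs a \emph{unique} Hecke-equivariant lift $\widetilde{\Eis}{}^k_{\rig,N}$ of the Eisenstein class to $H^1_{\rig,c\text{-}ss}(\bar Y^{\ord},\cH^{[k]}(1))$ (compact support towards the supersingular locus), which \emph{is} the Poincar\'e dual of $H^1_{\rig,c\text{-}\partial}$. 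The uniqueness uses the Gysin sequence and the fact that the Hecke eigensystems supported on the supersingular divisor come from cusp forms new at $p$, so cannot coincide with the Eisenstein system. Without this lift you have no linear functional on the correct space; your sentence ``Hecke-equivariance \dots\ automatically forces the linear functional \dots\ to factor through'' presupposes a functional that has not yet been defined. The $U(p)$-compatibility also comes out of this: the lifted class is $\varphi$-invariant, and the adjoint of $\varphi$ on the other side is $p^{-1-k}\langle p\rangle^{-1}U(p)$.

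\textbf{The primitive is on the other side.} For the explicit period, you propose to take a Coleman primitive of the Eisenstein class $\omega_{\Eis}$. The paper does the opposite: it is the class of $E^{(k+2)}_{\mathrm{crit},\chi}$ that is exact over the ordinary locus (it vanishes at every ordinary cusp), and its primitive is the \emph{ordinary} Eisenstein series $F^{(-k)}_{\ord,\chi}$ of weight $-k$. One writes the Poincar\'e pairing as a sum of residues at supersingular points, uses $\omega=\nabla A$ with $A=F^{(-k)}_{\ord,\chi}$ and the residue theorem to transfer the sum to the cusps, and then computes the constant terms of the two Eisenstein series at the cusps of maximal width. This is what produces the product $L_p(\chi^{-1},1+k)\,L(\chi,-1-k)$. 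Your plan of integrating $\omega_{\Eis}$ would not give access to this cuspidal residue formula in the same way. Finally, note that your claim that the functional ``sends a generator to a nonzero element of $L$'' is not automatic: the paper observes that the value on $E^{(k+2)}_{\mathrm{crit},\chi}$ involves $L_p(\chi^{-1},1+k)$, which could in principle vanish (this is exactly Bella\"iche's pathological case where the critical-slope generalised eigenspace is non-semisimple).
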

   
   Now let $F$ be a real quadratic field, with $p = \fp_1 \fp_2$ split in $F$ as before; let $\sigma_i$ be the embedding $F \into L$ corresponding to the prime $\fp_i$. Let $\cF$ be a Hilbert modular eigenform, of level $U_1(\fN)$ for some $\fN$ coprime to $p$. Choose an embedding of the coefficient field of $\cF$ into $L$, and suppose that $\cF$ has weights $(k_1 + 2, k_2 + 2)$ at the embeddings $\sigma_1, \sigma_2$ respectively, where $k_i \ge 0$. We write $\cF^{[\fp_1, \fp_2]}$ for the form obtained from $\cF$ by setting to 0 all Fourier--Whittaker coefficients $c(\fm, \cF)$ with $\fm$ divisible by one or both of the $\fp_i$. This is an element of the space $S^\dagger_{(k_1 + 2, k_2 + 2)}(\fN, L)$ of overconvergent Hilbert modular forms of tame level $\fN$ and weight $(k_1 + 2, k_2 + 2)$.
   
   \begin{theorem}
    The form $\cF^{[\fp_1, \fp_2]}$ is in the image of the differential operator 
    \[ \Theta_1: S^\dagger_{(-k_1, k_2 + 2)}(\fN, L) \into S^\dagger_{(k_1 + 2, k_2 + 2)}(\fN, L), \]
    and for any integer $0 \le j \le \min(k_1, k_2)$, we have the formula
    \[ 
     \left\langle \mathrm{AF}^{[\cF, j]}_{\mathrm{syn}}, \omega_{\cF}\right\rangle = (*) \cdot \lambda_{\mathrm{Eis}}\left( \left[ \Theta_1^{-1}\left( \cF^{[\fp_1, \fp_2]} \right)\right]_{k_1 - j}\right), 
    \]
    where $\left[ \Theta_1^{-1}\left( \cF^{[\fp_1, \fp_2]} \right)\right]_{k_1 - j} \in S^{\dagger}_{k_1 + k_2 - 2j + 2}(N, L)$ is the Rankin--Cohen bracket, $(*)$ is an explicit non-zero constant, and $\lambda_{\mathrm{Eis}}$ denotes the linear functional defined by pairing with the level $N$ Eisenstein class. In particular, if the projection of $\left[ \Theta_1^{-1}\left( \cF^{[\fp_1, \fp_2]} \right)\right]_{k_1 - j}$ to the critical-slope Eisenstein quotient is non-zero, then the class $\mathrm{AF}^{[\cF, j]}_{\mathrm{syn}}$ does not vanish.
   \end{theorem}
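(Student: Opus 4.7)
For the first assertion, the differential operator $\Theta_1$ acts on the $q$-expansion of a Hilbert modular form by multiplying the $\fm$-th coefficient by (essentially) $\sigma_1(\fm)$. Since every Fourier coefficient of $\cF^{[\fp_1, \fp_2]}$ is supported on ideals $\fm$ coprime to $\fp_1 \fp_2$, one may invert $\Theta_1$ formally by dividing coefficient-wise. The verification that the resulting formal $q$-expansion actually defines an overconvergent form of weight $(-k_1, k_2+2)$ is standard: $p$-depletion in the $\sigma_1$-direction confines the form away from the $\fp_1$-supersingular locus, and division by units is controlled on any affinoid neighbourhood of the ordinary locus, exactly as in Coleman's elliptic treatment.

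For the explicit formula, the plan is to adapt the Besser--Nekov\'a\v{r} syntomic computation used in \cite{BDR-BeilinsonFlach, KLZ1a} to the Asai--Flach setting. The \'etale Asai--Flach class is constructed in \cite{LLZ16} as the pushforward along the diagonal $\iota \colon Y_1(N) \hookrightarrow Y_1(\fN)$ of a cup product of motivic Eisenstein classes with $p$-adic motivic differentials of appropriate weights. Replacing the \'etale Eisenstein classes by Besser's syntomic Eisenstein classes, the same diagram computes $\mathrm{AF}^{[\cF, j]}_{\syn}$, and the pairing $\langle \mathrm{AF}^{[\cF,j]}_{\syn}, \omega_{\cF}\rangle$ becomes, by the syntomic projection formula, a pairing on $Y_1(N)$ between the weight $k_1+k_2-2j+2$ Eisenstein class and a rigid cohomology class built from $\iota^*(\omega_{\cF})$ and the auxiliary differentials.

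The bridge to overconvergent forms then proceeds in two steps. First, on the relevant rigid cohomology group over the ordinary locus of $Y_1(N)$, one may replace $\cF$ by $\cF^{[\fp_1, \fp_2]}$, since the unit-root decomposition underlying the syntomic Eisenstein class annihilates contributions from the loci where either $\fp_i$ is supersingular --- the Hilbert analogue of the step used in \cite{KLZ1a} for Rankin--Selberg classes. Second, the restriction-to-diagonal functor on Fourier expansions factors as the Rankin--Cohen bracket $[-]_{k_1-j}$ preceded by $\Theta_1^{-1}$, the latter being meaningful precisely because the form is $\fp_1$-depleted. Collecting the factorials from the differential operators, the Poincar\'e duality normalization, and the Euler-type contributions depending on $j$ yields the constant $(*)$, and the ``in particular'' clause then follows directly from Theorem~1.

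The principal obstacle will be the rigorous translation between the three cohomological languages involved --- syntomic cohomology of the Hilbert modular surface, rigid cohomology of $Y_1(N)$, and overconvergent modular forms via $q$-expansions --- and, in particular, verifying that the rigid-cohomology pairing produced by the pushforward is precisely the one for which Theorem~1 supplies the critical-slope factorization through $S_{k+2}^\dagger(N,L) / \theta^{k+1}(S_{-k}^\dagger(N,L))$. A secondary delicacy is the interaction between the two successive depletions at $\fp_1$ and $\fp_2$ and the Rankin--Cohen bracket, which mixes both components: unlike the elliptic Rankin--Selberg case where one inverts a single $\theta$-operator, here the compatibility of $\Theta_1^{-1}$ with the restriction process will require a careful Fourier-expansion argument.
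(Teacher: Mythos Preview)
Your proposal has a genuine gap in the first assertion. You claim that the overconvergence of $\Theta_1^{-1}(\cF^{[\fp_1,\fp_2]})$ is ``standard'' and follows ``exactly as in Coleman's elliptic treatment''. The paper explicitly flags this as the hard part: it is automatic that the formal antiderivative exists as a \emph{$p$-adic} Hilbert modular form (as a limit of $\theta_1^{p^n(p-1)-k_1-1}(\cF^{[\fp_1]})$), but showing it is \emph{overconvergent} is not a $q$-expansion argument. The paper states this as Conjecture~\ref{conj:thetaimage} for $\cF^{[\fp_1]}$, can only prove it under the extra hypothesis that $\cF$ is non-ordinary at $\fp_2$ (Proposition~\ref{prop:sstheta}), and for $\cF^{[\fp_1,\fp_2]}$ appeals to Fornea's argument (Proposition~\ref{prop:fornea}). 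Both arguments go through rigid cohomology: one uses that $U(\fp_1)$ is invertible on $H^2_{\rig, c-\partial}$ to force $\cF^{[\fp_1]}$ into $\image\Theta_1 + \image\Theta_2$, and then a slope comparison to kill the $\Theta_2$ component. Your ``division by units on an affinoid'' sketch does not supply this, and indeed cannot, since the analogous statement for $\cF^{[\fp_1]}$ alone remains open.

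For the regulator formula, your outline captures the overall shape (syntomic projection formula, reduction to a rigid pairing on $Y_1(N)$, identification with overconvergent forms), but the mechanism by which $\cF$ gets replaced by $\cF^{[\fp_1,\fp_2]}$ is not the one you describe. There is no ``unit-root decomposition annihilating supersingular contributions''. What the paper actually does is work in Besser's finite-polynomial cohomology $H^2_{\FP}$ relative to the Asai Euler polynomial $P_p(\cF,T)$, and exploit the algebraic identity
\[
  P_p(T_1T_2)=(1-\alpha_1\beta_1\alpha_2\beta_2 T_1^2T_2^2)\,P_{\fp_1}(T_1)P_{\fp_2}(T_2)+b(T_1,T_2)P_{\fp_2}(T_2)+b'(T_1,T_2)P_{\fp_1}(T_1),
\]
with $b,b'$ chosen so that every monomial $T_1^xT_2^y$ in $b$ has $x>y$ (and vice versa in $b'$). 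A support lemma on Fourier--Whittaker expansions then shows that the $b$- and $b'$-terms, after pullback along $\iota$, lie in $\ker U(p)^{\bullet}$ and hence are exact. The surviving term is exactly $(1-\alpha_1\beta_1\alpha_2\beta_2 V(p)^2)\cF^{[\fp_1,\fp_2]}$, and its $\Theta_1$-antiderivative, pushed through the Clebsch--Gordan map and Proposition~\ref{prop:ocprojectionbracket}, produces the Rankin--Cohen bracket $[\Theta_1^{-1}\cF^{[\fp_1,\fp_2]}]_{k_1-j}$ together with the explicit Euler factor $(*)$. None of the star-product / polynomial-splitting machinery appears in your proposal, and without it you cannot justify discarding the cross-terms or obtain the correct Euler factor.
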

  
%
%
%
   For precise definitions of the notations used in the statement, see the main body of the paper. Assuming a certain hypothesis regarding the rate of convergence of various power series, we have computed explicitly this projection for an example with $p = 3$ and $F = \QQ(\sqrt{13})$, and verified that the critical-slope projection is indeed non-zero.
  
  \subsection{Relations to other work} 
  
   The Asai--Flach classes in the cohomology of a Hilbert modular surface can be regarded as a ``degenerate case'' of diagonal cycles on the product of a Hilbert surface and an elliptic curve. Since the initial release of this paper in preprint form, analogues of our regulator formula in this diagonal-cycle case have been announced by Blanco-Chac\'on and Sols \cite{blancosols} and by Fornea \cite{fornea17}; there is a substantial overlap between their computations and ours. This is also the topic of forthcoming work of Zhaorong Jin.
  
  \subsection*{Acknowledgements} 
  
   We would like to thank Massimo Bertolini, John Coates and Henri Darmon for their interest and for stimulating conversations relating to the topic of this paper. We are also very grateful to John Voight for assistance with the numerical example. The ideas for this paper were developed while the first and third authors were visiting the Institute for Advanced Study; they would like to thank the Institute for their hospitality. 

 \section{Preliminaries on elliptic modular forms}
  \label{sect:OCMF}
  
  We start by recalling some facts about elliptic modular forms and their $p$-adic analogues.
  
  \subsection{Nearly holomorphic modular forms}
  
   Let $\cH$ be the upper half-plane. Recall (cf.~\cite[\S 2.1.1]{urban14}) that a $C^\infty$ function $f: \cH \to \mathbf{C}$ is said to be a \emph{nearly-holomorphic modular form} of level $N$, weight $r$ and degree $\le n$ if:
   \begin{itemize}
    \item The function $f$ transforms like a modular form of weight $r$ under $\Gamma_1(N)$.
    \item The absolute value $|f(\gamma \tau)|$ is bounded as $\Im \tau \to \infty$, for every $\gamma \in \GL_2^+(\QQ)$.
    \item The function $f$ can be written in the form
    \[ \sum_{j = 0}^n f_j(\tau) (\Im \tau)^{-j} \]
    where $f_j$ are holomorphic functions.
   \end{itemize}
   We write $M_r^{\le n}(N, \CC)$ for the space of such functions.
   
   \begin{definition}
    We say $f \in M_r^{\le n}(N, \CC)$ is \emph{strongly cuspidal} if all the $f_j$ vanish at $\infty$, and the same holds with $f$ replaced by $f \mid_r \gamma$ for any $\gamma \in \operatorname{SL}_2(\ZZ)$. We write $\SS_r^{\le n}(N, \CC)$ for the space of strongly cuspidal forms. 
   \end{definition}
   
   One knows that the Maass--Shimura differential operator $\delta = \tfrac{1}{2\pi i}\left( \tfrac{\mathrm{d}}{\mathrm{d}\tau} + \tfrac{r}{2i \Im(\tau)}\right)$ gives maps
   \[ 
   M_r^{\le n}(N, \CC) \to M_{r + 2}^{\le n+1}(N, \CC),\qquad\SS_r^{\le n}(N, \CC) \to \SS_{r + 2}^{\le n+1}(N, \CC).
   \]
   
   Shimura has shown that if $r > 2n$ the inclusion $M_r(N, \CC) \into  M_r^{\le n}(N, \CC)$ has a left inverse, the ``holomorphic projection'' map $\Pi^{\mathrm{hol}}$, characterised by the condition that $\Pi^{\mathrm{hol}}(\delta^j f) = 0$ for all $j \in \{ 1, \dots, n\}$ and all holomorphic modular forms $f \in M_{r-2j}(N)$. This map clearly sends $\SS_r^{\le n}(N)$ to $S_r(N)$.
   
  \subsection{Geometric interpretation}
  
   Let $\cH^{(r)}$ denote\footnote{For consistency with our previous works, we have used square brackets for ``homological'' objects, and round brackets for ``cohomological'' ones. Thus $\cH^{[r]}$ is the dual of $\cH^{(r)}$. The sheaves $\cH^{(r)}$ and $\cH^{[r]}$ are actually isomorphic to each other, but their filtrations and their natural Hecke actions are not the same, so we shall not treat this isomorphism as an identification.} the $r$-th symmetric power of the first relative de Rham cohomology sheaf of the universal elliptic curve $\mathcal{E} / Y_1(N)$, extended to a vector bundle on $X_1(N)$ as in \cite[\S 2.2.1]{urban14}. The $n$-th power of the Hodge line bundle $\omega^{r}$ embeds naturally in $\cH^{(r)}$, and one has
   \[ 
    M_r^{\le n}(N, \CC) = H^0\left(X_1(N)_{\CC}, \cH^{(n)} \otimes \omega^{r-n}\right),\qquad \SS_r^{\le n}(N, \CC) = H^0\left(X_1(N)_\CC, \cH^{(n)} \otimes \omega^{r-n}(-C)\right),
   \]
   where $C$ is the divisor of cusps. (The first formula is \cite[Proposition 1]{urban14}, and the second is proved similarly.) We can use this to define $M_r^{\le n}(N, L)$ and $\SS_r^{\le n}(N, L)$ for any coefficient field $L$ of characteristic 0 containing the $N$-th roots of unity.\footnote{This is in order to avoid problems with the non-rationality of the cusp $\infty$ in the standard $\QQ$-model of $Y_1(N)$.}

   \begin{remark}
    \label{rmk:parsheaf}
    Note that the space $S_r^{\mathrm{nh}}(N, \CC)$ of nearly-holomorphic cusp forms defined in \cite[\S 2.3]{darmonrotger14}, for $r \ge 2$, is a subspace of our space $M_r^{\le (r-2)}(N, \CC)$, but it is not the same as our space $\SS_r^{\le (r-2)}(N, \CC)$ of strongly cuspidal forms. Darmon and Rotger work with a certain ``parabolic'' sheaf $(\cH^{(k)} \otimes \Omega^1_{X_1(N)})_{\mathrm{par}}$, intermediate between $\cH^{(r-2)} \otimes \omega^2$ and $\cH^{(r-2)} \otimes \Omega^1 = \cH^{(r-2)} \otimes \omega^2(-C)$. In terms of functions on $\cH$, this corresponds to requiring that $f_0(\infty) = 0$ but with no condition on the higher $f_j$ (and similarly at the other cusps of $\Gamma_1(N)$).
   \end{remark}
%

  \subsection{Nearly overconvergent forms}
  
   Let $p \nmid N$ be prime, and let $L$ be a finite extension of $\Qp$, again containing the $N$-th roots of unity. Following \cite[\S 3.2.1]{urban14}, we make the following definitions:
   
   \begin{definition}
    Let $r \in \ZZ$ and $n \in \ZZ_{\ge 0}$. We define the space of nearly-overconvergent $p$-adic modular forms of degree $\le n$, and its subspace of strongly cuspidal forms, by
   \[ 
    M^{\dagger, \le n}_r(N, L) = H^0\left(X_1(N)^{\rig}, j^\dagger \left(\cH^{(n)} \otimes \omega^{r-n}\right)\right),\qquad 
    \SS^{\dagger, \le n}_r(N, L) = H^0\left(X_1(N)^{\rig}, j^\dagger\left(\cH^{(n)} \otimes \omega^{r-n}(-C)\right)\right)
    \]
    where $X_1(N)^{\rig}$ is the rigid-analytic space over $L$ associated to $X_1(N)$, $C$ is the divisor of cusps, and $j$ is the inclusion of the ordinary locus $X_1(N)^{\ord}$ into $X_1(N)^{\mathrm{rig}}$.
   \end{definition}
   
   For $n = 0$ these are the familiar spaces $M^\dagger_r(N, L)$ and $S^\dagger_r(N, L)$ of overconvergent modular (resp.~cusp) forms with $q$-expansion coefficients in $L$. We shall often omit the coefficient field $L$ or the tame level $N$ (or both) from the notation if these are clear from context. The differential operator $\delta$ is defined on these spaces, and we have the following crucial fact:
   
   \begin{proposition}
    For any integer $k \ge 0$ the operator
    \[ 
     \delta^{k + 1} : M_{-k}^\dagger \to M_{k + 2}^{\dagger, \le (k + 1)} 
    \]
    has image contained in $S_{k + 2}^{\dagger}$, and it coincides with Coleman's differential operator $\theta^{k + 1} = \left( q \tfrac{\mathrm{d}}{\mathrm{d}q}\right)^{k + 1}$.\qed
   \end{proposition}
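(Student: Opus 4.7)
My plan is to derive an explicit closed form for the algebraic ``nearly-holomorphic expansion'' of $\delta^n(f)$ on the ordinary locus, and to read off both conclusions when $n = k+1$. On $X_1(N)^{\ord}$, the Frobenius unit-root splitting $\cH^{(1)} = \omega \oplus U$ (with $U \cong \omega^{-1}$ via the de Rham pairing) induces, as in \cite[\S3.2--3.3]{urban14}, an isomorphism $\cH^{(n)}\otimes\omega^{r-n}|_{\ord} \cong \bigoplus_{j=0}^{n}\omega^{r-2j}$, so every $F\in M_r^{\dagger,\le n}$ admits a unique expansion $F = \sum_{j=0}^n f_j\, Y^j$ with $f_j \in M_{r-2j}^\dagger$, where $Y$ is a formal generator of the unit-root direction.

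On such expansions a direct calculation---the rigid analogue of the complex-analytic formula $\delta_r = \tfrac{1}{2\pi i}(\partial_\tau - r/(4\pi \Im\tau))$, which uses horizontality of $U$ under the Gauss--Manin connection---yields
\[
  \delta\bigl(g\, Y^j\bigr) \;=\; \theta(g)\, Y^j \;-\; (s+j)\, g\, Y^{j+1},
\]
where $s$ is the weight of $g$ and $\theta = q\tfrac{\mathrm{d}}{\mathrm{d}q}$. Writing $\delta^n(f) = \sum_{j=0}^n c_j^{(n)} Y^j$ for $f \in M_{-k}^\dagger$ (so $r = -k$), one obtains the recursion
\[
  c_j^{(n+1)} \;=\; \theta\bigl(c_j^{(n)}\bigr) + (k - 2n + j - 1)\, c_{j-1}^{(n)},
\]
which, starting from $c_0^{(0)} = f$, is solved in closed form by
\[
  c_j^{(n)} \;=\; \binom{n}{j}\cdot\prod_{i=n-j}^{n-1}(k-i)\cdot \theta^{n-j}(f),
\]
as a routine induction on $n$ using Pascal's identity confirms.

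Setting $n = k+1$, the product $\prod_{i=k+1-j}^{k}(k-i)$ contains the factor $k - k = 0$ for every $j \ge 1$, so $c_j^{(k+1)} = 0$ and only the $Y^0$-term of $\delta^{k+1}(f)$ survives, equal to $c_0^{(k+1)} = \theta^{k+1}(f)$. This identifies $\delta^{k+1}$ with Coleman's operator. Cuspidality of the image is immediate from the $q$-expansion---the constant coefficient is killed since $\theta^{k+1}$ sends $a_n q^n \mapsto n^{k+1} a_n q^n$---together with the analogous vanishing at the other cusps, a form of Coleman's theorem on iterated $\theta$. The only non-routine step is the action formula for $\delta$ in the unit-root expansion, which is Urban's \cite[Prop.~3.3.5]{urban14}; the rest is essentially bookkeeping.
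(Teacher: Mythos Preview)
The paper does not prove this proposition at all: the \qed\ symbol appended to the statement signals that it is being recalled as a known fact (it is the overconvergent analogue of Bol's identity, due essentially to Coleman and spelled out in Urban's framework). So there is no ``paper's own proof'' to compare against.

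Your argument is a correct and self-contained derivation. The unit-root expansion and the action formula $\delta(g\,Y^j)=\theta(g)\,Y^j-(s+j)\,g\,Y^{j+1}$ are exactly what one extracts from Urban's setup, and the closed form $c_j^{(n)}=\binom{n}{j}\prod_{i=n-j}^{n-1}(k-i)\,\theta^{n-j}(f)$ checks out against the recursion (I verified the small cases and the inductive step). The vanishing for $j\ge 1$ at $n=k+1$ then follows because the product picks up the factor $k-k=0$, which is precisely the Bol-type cancellation. For cuspidality, your appeal to the $q$-expansion at every cusp is the right idea; strictly speaking this is Coleman's theorem that $\theta^{k+1}(M_{-k}^\dagger)\subset S_{k+2}^\dagger$, and you are right to flag it as the one point that needs a reference rather than a one-line remark. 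A small stylistic comment: you cite ``Urban's Prop.~3.3.5'' for the action formula; make sure the numbering matches the published version, since this is the only genuinely non-elementary input.
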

   
  \subsection{Rigid cohomology}
   \label{sect:rigidcoho}
   Let $k \ge 0$ be an integer, and $K$ any field of characteristic 0. We have the following general sheaf-theoretic fact:
   
   \begin{proposition}
    \label{prop:hypercoho}
    Consider the complexes of sheaves on $X_1(N)_K$ given by
    \begin{align*}
     \mathrm{DR}^\bullet\left(\cH^{(k)}\right) &=
     \left[ \cH^{(k)} \rTo^{\nabla} \cH^{(k)} \otimes \Omega^1(C) \right],&
     \mathrm{BGG}^\bullet\left(\cH^{(k)}\right) &= 
     \left[ \omega^{-k} \rTo^{\theta^{k + 1}} \omega^{k + 2} \right]\\     
     \mathrm{DR}_c^\bullet\left(\cH^{(k)}\right) &=
     \left[ \cH^{(k)}(-C) \rTo^{\nabla} \cH^{(k)} \otimes \Omega^1, \right],&
     \mathrm{BGG}_c^\bullet\left(\cH^{(k)}\right) &= 
     \left[ \omega^{-k}(-C) \rTo^{\theta^{k + 1}} \omega^{k + 2}(-C) \right]\\
     \mathrm{DR}_{\mathrm{par}}^\bullet\left(\cH^{(k)}\right) &=
     \left[ \cH^{(k)} \rTo^{\nabla} (\cH^{(k)} \otimes \Omega^1)_\mathrm{par} \right],&
     \mathrm{BGG}_{\mathrm{par}}^\bullet\left(\cH^{(k)}\right) &= 
     \left[ \omega^{-k} \rTo^{\theta^{k + 1}} \omega^{k + 2}(-C) \right]
    \end{align*}
    In each case, there are maps of complexes $\mathrm{BGG}^\bullet_? \to \mathrm{DR}_?^\bullet$ which are quasi-isomorphisms.\qed
   \end{proposition}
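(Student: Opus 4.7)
This is a variant of Faltings' BGG resolution for the local system $\cH^{(k)}$ on the modular curve, adapted to three different boundary conditions at the cusps. The plan is to treat all three cases by a single filtered-complex argument, and the key input is the Hodge filtration on $\cH^{(k)}$ together with Griffiths transversality.

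First I would set up the Hodge filtration. Since $\cH$ carries the filtration $0 \subset \omega \subset \cH$ with graded pieces $\omega$ and $\omega^{-1}$, taking the $k$-th symmetric power gives a filtration $\mathrm{Fil}^\bullet$ on $\cH^{(k)}$ of length $k+1$ with $\mathrm{gr}^i \cong \omega^{2i-k}$ for $0 \le i \le k$; in particular $\mathrm{Fil}^k\cH^{(k)} = \omega^k$ and $\cH^{(k)}/\mathrm{Fil}^1 = \omega^{-k}$. The Gauss--Manin connection $\nabla$ satisfies Griffiths transversality, $\nabla(\mathrm{Fil}^i) \subset \mathrm{Fil}^{i-1} \otimes \Omega^1(\log C)$, so it strictly decreases the Hodge index.

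Next I would endow $\mathrm{DR}^\bullet(\cH^{(k)})$ with the induced filtration, whose associated graded pieces are two-term complexes of line bundles
\[
\mathrm{gr}^i\,\mathrm{DR}^\bullet = \bigl[\,\omega^{2i-k} \xrightarrow{\,\overline{\nabla}\,} \omega^{2(i-1)-k} \otimes \Omega^1(C)\,\bigr],
\]
and after applying the Kodaira--Spencer isomorphism $\Omega^1 \cong \omega^2$ the target becomes $\omega^{2i-k}$. The crucial local computation is that for $1 \le i \le k$ the map $\overline{\nabla}$ is multiplication by a non-zero scalar depending only on $i$ and $k$ (computed by differentiating a monomial in a local frame for $\cH$), hence an isomorphism. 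Thus the intermediate graded subquotients are acyclic, and only the extremal pieces $i = 0$ (giving $\omega^{-k}$ in degree $0$) and $i = k+1$ (giving $\omega^{k+2}$ in degree $1$) survive. A standard spectral-sequence argument applied to the filtered complex then promotes this to a quasi-isomorphism with the two-term complex $\omega^{-k} \to \omega^{k+2}$, which is $\mathrm{BGG}^\bullet$. The explicit comparison map is built from the natural inclusion $\omega^{k+2} = \omega^k \otimes \omega^2 \hookrightarrow \mathrm{Fil}^k \cH^{(k)} \otimes \Omega^1(C)$ in degree $1$, together with a lift in degree $0$ that is well-defined modulo the image of the intermediate pieces (the key point being their acyclicity). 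The surviving differential is identified with Coleman's $\theta^{k+1} = (q\tfrac{\mathrm{d}}{\mathrm{d}q})^{k+1}$ by the local $q$-expansion formula $\nabla = q\tfrac{\mathrm{d}}{\mathrm{d}q}$ on the Serre--Tate coordinate, after choosing compatible trivialisations.

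For the cuspidal and parabolic variants the same argument runs, with only the pole/vanishing orders at cusps changing. For $\mathrm{DR}_c$ one starts from the subsheaf $\cH^{(k)}(-C)$ and removes the log-poles from the target, producing $\mathrm{BGG}_c$; for $\mathrm{DR}_{\mathrm{par}}$ one uses the parabolic sheaf of Remark 2.1.5, which sits between the other two and yields $\mathrm{BGG}_{\mathrm{par}}$.

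The main obstacle I anticipate is the parabolic case: because the parabolic sheaf $(\cH^{(k)} \otimes \Omega^1)_{\mathrm{par}}$ is a genuine intermediate object between $\cH^{(k)}(-C) \otimes \Omega^1$ and $\cH^{(k)} \otimes \Omega^1(C)$, one must track carefully how the Hodge filtration interacts with the ``half-vanishing'' condition at the cusps and verify that the acyclicity of $\mathrm{gr}^i$ for $1 \le i \le k$ survives with these modified pole orders. The other technical point is confirming that the scalar obtained on each $\mathrm{gr}^i$ is non-zero in all characteristics under consideration; both of these are explicit local checks at the cusps and at the supersingular locus, and should pose no genuine obstruction but will occupy the bulk of the write-up.
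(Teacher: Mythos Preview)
Your proposal outlines the standard proof via the Hodge filtration and Griffiths transversality, and the sketch is essentially correct. However, you should note that the paper does not actually prove this proposition: it is stated as a ``general sheaf-theoretic fact'' with a \qed\ at the end of the statement and no proof environment following it. The paper only adds, immediately afterwards, a description of the maps (natural inclusion in degree~1; in degree~0 the map is characterised by the condition that its composite with the projection $\cH^{(k)} \to \omega^{-k}$ is multiplication by $(-1)^k k!$). So there is nothing to compare your argument against in the paper itself.

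One small correction to your sketch: you describe the degree-0 map as ``a lift \dots\ well-defined modulo the image of the intermediate pieces'', but in fact the map $\omega^{-k} \to \cH^{(k)}$ is a genuine map of sheaves (an explicit splitting of the quotient $\cH^{(k)} \to \omega^{-k}$, scaled by $(-1)^k k!$), not merely something defined up to acyclic ambiguity. This is what the paper's remark after the proposition is specifying. Also, your worry about ``all characteristics'' is unnecessary here, since $K$ is a field of characteristic~0.
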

   
   Here $(\cH^{(k)} \otimes \Omega^1)_\mathrm{par}$ is the subsheaf of $\cH^{(k)} \otimes \Omega^1(C)$ mentioned in Remark \ref{rmk:parsheaf} above, and $\nabla$ denotes the Gauss--Manin connection. The map $\mathrm{BGG}^\bullet\left(\cH^{(k)}\right) \to \mathrm{DR}^\bullet(\cH^{(k)})$ is the natural inclusion in degree 1, and in degree 0 it is characterised by the fact that its composite with the natural map $\cH^{(k)} \to \omega^{-k}$ is multiplication by $(-1)^k k!$; the other maps are characterised similarly. 
   
   If we let $K = \Qp$, where $p \nmid N$, and let $j^\dagger$ denote the inclusion of the ordinary locus $X_1(N)^{\ord}$ in $X_1(N)^{\rig}$ as above, then the hypercohomology groups $\mathbb{H}^*\left(X_1(N)^{\rig}, j^\dag \mathrm{DR}^\bullet_?\left(\cH^{(k)}\right)\right)$ compute various flavours of rigid cohomology of the special fibre (with coefficients in $\cH^{(k)}$). The hypercohomology groups of $j^\dagger\mathrm{DR}^\bullet\left(\cH^{(k)}\right)$ and $j^\dagger\mathrm{DR}^\bullet_{\mathrm{par}}\left(\cH^{(k)}\right)$ compute the rigid cohomology of the mod $p$ varieties $\bar Y_1(N)^{\ord}$ and $\bar X_1(N)^{\ord}$ respectively. As in \cite[\S 6.5]{HLTT16}, we interpret the hypercohomology of $j^\dag \mathrm{DR}_c^\bullet$ as ``rigid cohomology of $\bar Y_1(N)^{\ord}$ with compact supports towards the cusps'' (but not towards the supersingular locus), and we denote it by $H^1_{\rig, c-\partial}\left(\bar Y_1(N)^{\ord}, \cH^{(k)}\right)$.
   
   Combining this with the quasi-isomorphisms of Proposition \ref{prop:hypercoho}, and the fact that $X_1(N)^{\ord}$ is affinoid (so all higher sheaf cohomology groups vanish), we obtain presentations in terms of overconvergent modular forms for these three rigid cohomology groups. More precisely, for $L$ a finite extension of $\Qp$ containing the $N$-th roots of unity, we have isomorphisms 
   \begin{subequations}
    \label{eq:rigidcoho}
    \begin{align}
     \label{eq:rigidcoho1}
     H^1_\rig\left(\bar Y_1(N)^{\ord}, \cH^{(k)}\right) \otimes_{\Qp} L &\cong \frac{M_{k + 2}^\dagger(N, L)}{\theta^{k + 1}\left(M_{-k}^\dagger(N, L)\right)},\\
     H^1_{\rig}\left(\bar X_1(N)^{\ord}, \cH^{(k)}\right) \otimes_{\Qp} L &\cong \frac{S_{k + 2}^\dagger(N, L)}{\theta^{k + 1}\left(M_{-k}^\dagger(N, L)\right)},\\
     \label{eq:rigidcoho3}
     H^1_{\rig, c-\partial}\left(\bar Y_1(N)^{\ord}, \cH^{(k)}\right) \otimes_{\Qp} L &\cong \frac{S_{k + 2}^\dagger(N, L)}{\theta^{k + 1}\left(S_{-k}^\dagger(N, L)\right)}.
    \end{align}
   \end{subequations}
   All three isomorphisms are clearly compatible with Hecke operators away from $p$, and the action of the $p$-power Frobenius map $\varphi$ on the rigid cohomology corresponds to the operator $p^{k+1} \langle p \rangle V_p$ on $M_{k + 2}^\dagger(N, L)$, where $V_p$ acts on $q$-expansions as $q \mapsto q^p$. (The operator $\langle p \rangle$ appears because the cusp $\infty$ is not rational in our model of $Y_1(N)$; see \cite[\S 6.1]{KLZ1a}.)
   
   We can also consider the rigid cohomology $H^1_{\rig, c-ss}\left(\bar Y_1(N)^{\ord}, \cH^{(k)}\right)$ with compact supports towards the supersingular points (but not the cusps). If $\cH^{[k]}$ denotes the dual of $\cH^{(k)}$ (so that $\cH^{[k]} \cong \cH^{(k)}$ as isocrystals, but the filtration and Frobenius actions are shifted), then one obtains a perfect Poincar\'e duality pairing
   \[ 
    H^1_{\rig, c-\partial}\left(\bar Y_1(N)^{\ord}, \cH^{(k)}\right) \times H^1_{\rig, c-ss}\left(\bar Y_1(N)^{\ord}, \cH^{[k]}(1)\right) \to \Qp, 
   \]
   compatible with the action of Frobenius.
   
   \begin{remark} \
    \begin{enumerate}
     \item Presentations of rigid cohomology similar to \eqref{eq:rigidcoho} are fundamental in the $p$-adic regulator computations of \cite{darmonrotger14} and \cite{KLZ1a}. However, unlike these previous works, in the present paper we shall project to an \emph{Eisenstein} eigenspace in the rigid cohomology, rather than a cuspidal one; so it is important to distinguish carefully between the three slightly different cohomology spaces \eqref{eq:rigidcoho1}--\eqref{eq:rigidcoho3}. Our account is based on the description of the theory for Hilbert modular forms given in \cite{tianxiao}. (There is a minor error in \cite{darmonrotger14} at this point -- it is claimed in equation (2.30) of \emph{op.cit.} that the quotient $\frac{S_{k + 2}^\dagger(N, L)}{\theta^{k + 1}\left(S_{-k}^\dagger(N, L)\right)}$ computes parabolic cohomology.)
     \item The cohomology groups $H^\bullet_{\rig, c-ss}\left(\bar Y_1(N)^{\ord}, \cH^{(k)}\right)$ are more difficult to describe directly in terms of overconvergent modular forms. They can be interpreted as the cohomology of the mapping fibre of restriction to the ``infinitesimal boundary'' of the supersingular residue discs.
    \end{enumerate}
   \end{remark}

  \subsection{Overconvergent projection operators}

   There exist two slightly different generalisations of the holomorphic projection operator to nearly-overconvergent modular forms.
   
   \subsubsection{Urban's overconvergent projector}
    
    In \cite[\S 3.3.4]{urban14}, Urban shows that  whenever $r \notin \{2, 3, \dots, 2n\}$ there is an isomorphism
    \[ M_r^{\dagger, \le n} = \bigoplus_{j = 0}^n \delta^j \left(M_{r-2j}^\dagger\right)\]
    and hence there is a unique projection map (denoted by $\mathcal{H}^{\dagger}$ in \emph{op.cit.}) onto $M_r^\dagger$, characterised by vanishing on the subspace $\bigoplus_{j = 1}^n \delta^j(M_{r-2j}^\dagger)$. This map evidently sends $\SS_r^{\dagger, \le n}$ to $S_r^{\dagger}$.

   \subsubsection{Darmon--Rotger's overconvergent projector}
   
    For $r = k + 2 \ge 2$, Darmon and Rotger have defined a space $S_{k+2}^{\mathrm{n-oc}}$ intermediate between our spaces $M_{k+2}^{\dagger, \le k}$ and $\SS_{k+2}^{\dagger, \le k}$  (see \cite[Definition 2.4]{darmonrotger14}), and a map (denoted by $\Pi^{\mathrm{oc}}$ in \emph{op.cit.}) 
    \[ 
     S_{k + 2}^{\mathrm{n-oc}} \to S_{k + 2}^\dagger / \theta^{k+1}\left( M_{-k}^\dagger\right).
    \]
    This map is defined as follows: $S_{k + 2}^{\mathrm{n-oc}}$ is the overconvergent sections of $(\cH^{(k)} \otimes \Omega^1)_\mathrm{par}$, and $\Pi^{\mathrm{oc}}$ sends such a section $f$ to the element of $S_{k + 2}^\dagger / \theta^{k+1}\left( M_{-k}^\dagger\right)$ representing the cohomology class of $f$ in $ H^1_\rig\left(\bar X_1(N)^{\ord}, \cH^{(k)}\right)$. Note that this is only well-defined modulo $\theta^{k+1}\left( M_{-k}^\dagger\right)$, rather than $\theta^{k+1}\left( S_{-k}^\dagger\right)$ as claimed in \emph{op.cit.}, because of the error in (2.30) of \emph{op.cit.} mentioned above. However, the same construction with $\cH^{(k)} \otimes \Omega^1$ in place of $(\cH^{(k)} \otimes \Omega^1)_{\mathrm{par}}$ does give a well-defined map
    \[ \SS_{k + 2}^{\dagger, \le k} \to S_{k + 2}^\dagger / \theta^{k+1}\left( S_{-k}^\dagger\right), \]
    which we denote by the same symbol $\Pi^{\mathrm{oc}}$. This map is characterised by vanishing on $\delta( \SS_{k}^{\le k} )$; in particular, if $n$ is small enough that Urban's projector is defined on $\SS_{k + 2}^{\dagger, \le n}$, then the restriction of $\Pi^{\mathrm{oc}}$ to this subspace coincides with the image of Urban's projector in the quotient.
  
   
 \section{A ``compactification'' of the $\GL_2$ Eisenstein class}

  We begin with some computations relating to the Eisenstein classes for $\GL_2 / \QQ$; our goal is to understand the linear functional defined by pairing with the Eisenstein class in terms of the presentations of rigid cohomology given in Equation \eqref{eq:rigidcoho}. We fix an integer $N \ge 4$, and abbreviate $Y_1(N)$ simply by $Y$.
  
  \subsection{Lifting to compact supports}
  
   \begin{definition}
    For $k \ge 0$, let
    \[ \Eis^k_{\rig, N} \in H^1_{\rig}\left(\bar{Y}, \cH^{[k]}(1)\right)^{\varphi = 1} \]
    be the rigid realisation of the Eisenstein class, as in \cite[\S 4.2]{KLZ1a}.
   \end{definition}

   This class is annihilated by the Hecke operators $U'(\ell) -1$ (for $\ell \mid N$) and $T'(\ell) - 1 - \ell^{k+1} \langle \ell \rangle$ (for $\ell \nmid N$); cf.~\cite[Remark 4.3.5]{KLZ1b}.
   
   \begin{lemma}
    The class $\Eis^k_{\rig, N}$ admits a \emph{unique} lift $\widetilde{\Eis}^k_{\rig, N} \in H^1_{\rig, c-ss}\left( \bar{Y}, \cH^{[k]}(1)\right)$ characterised by the following property: for every prime $\ell \nmid Np$, we have
    \[
    T'(\ell) \left( \widetilde{\Eis}^k_{\rig, N} \right) = (1 + \ell^{k+1} \langle \ell^{-1} \rangle) \widetilde{\Eis}^k_{\rig, N}.
    \]
    Moreover, this class lies in the $\varphi = 1$ eigenspace, and in the $U'(\ell) = 1$ eigenspace for every prime $\ell \mid N$.
   \end{lemma}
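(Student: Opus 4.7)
The plan is to analyze the natural forgetful map
\[
H^1_{\rig, c-ss}\left(\bar Y^{\ord}, \cH^{[k]}(1)\right) \to H^1_{\rig}\left(\bar Y^{\ord}, \cH^{[k]}(1)\right)
\]
via a localization long exact sequence, and isolate the Eisenstein Hecke eigensystem. Both sides carry compatible actions of $\varphi$ and of Hecke operators away from $p$. The kernel and cokernel of this map fit into a Hecke-equivariant exact sequence whose extra terms come from the rigid cohomology of the supersingular residue discs, equivalently (by the mapping-fibre interpretation recalled after Proposition \ref{prop:hypercoho}) from the cohomology of the infinitesimal boundary annuli around each supersingular point.

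Second, I would argue that the Hecke systems of eigenvalues appearing in these boundary contributions do not include the Eisenstein system $T'(\ell) \mapsto 1 + \ell^{k+1}\langle \ell^{\pm 1}\rangle$ for $\ell \nmid Np$. Via Serre--Tate deformation theory, the cohomology of the supersingular residue discs (with coefficients in $\cH^{[k]}$) should be identifiable with a Brandt-style Hecke module for the definite quaternion algebra $B$ ramified at $p$ and $\infty$; Jacquet--Langlands then guarantees that every Hecke system appearing here arises from a cuspidal eigenform of weight $k+2$, so the reducible Eisenstein system is excluded. If a clean identification of this kind proves awkward, an alternative route is to pass through the algebraic de Rham cohomology of $\bar Y$ over $\Qp$ (where the Eisenstein eigenspace is one-dimensional by Eichler--Shimura) via a specialisation/comparison argument with $H^1_{\rig, c-ss}$ of $\bar Y^{\ord}$.

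Once the Eisenstein system is ruled out on the boundary, the forgetful map becomes an isomorphism on the generalised Eisenstein eigenspace, giving both existence and uniqueness of $\widetilde{\Eis}^k_{\rig, N}$. The $\varphi = 1$ and $U'(\ell) = 1$ properties then follow formally from uniqueness: $\varphi$ and each $U'(\ell)$ for $\ell \mid N$ commute with the $T'(\ell')$ for $\ell' \nmid Np$ and fix $\Eis^k_{\rig, N}$, so applying them to $\widetilde{\Eis}^k_{\rig, N}$ produces another class in $H^1_{\rig, c-ss}$ lifting $\Eis^k_{\rig, N}$ with the same $T'(\ell')$-eigenvalues, and uniqueness forces this class to coincide with $\widetilde{\Eis}^k_{\rig, N}$.

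The main obstacle is the second step: producing a sufficiently explicit description of the Hecke module structure on the supersingular residue-disc cohomology to exclude the Eisenstein eigensystem. This is a geometric input essentially independent of the specific Eisenstein class at hand; once isolated it should reduce to standard (but not entirely trivial) facts about the supersingular Hecke module and its classification via Jacquet--Langlands.
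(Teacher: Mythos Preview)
Your proposal is correct and takes essentially the same approach as the paper: the paper uses the Gysin exact sequence with target $H^1_{\rig}(\bar Y, \cH^{[k]}(1))$ (rather than $H^1_{\rig}(\bar Y^{\ord})$), identifies the kernel directly as $H^0_{\rig}(\bar Z, \cH^{[k]}(1))/\image H^0_{\rig}(\bar Y)$, and rules out the Eisenstein system there exactly via the Jacquet--Langlands input you invoke (the Hecke eigensystems on the supersingular locus correspond to cusp forms of level $\Gamma_1(N)\cap\Gamma_0(p)$ new at~$p$), deriving the $\varphi=1$ and $U'(\ell)=1$ properties from uniqueness just as you do. One small point your sketch glosses over: when $k=0$ the constant functions in $H^0_{\rig}(\bar Z)$ do carry the trivial-character Eisenstein eigensystem, but they coincide with the image of $H^0_{\rig}(\bar Y)$ and hence vanish in the quotient.
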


   \begin{proof}
    Let $\bar Z$ be the subscheme of supersingular points in $\bar Y$. The Gysin sequence for rigid cohomology gives us a long exact sequence
    \[ 0\rTo \frac{H^0_{\rig}\left(\bar Z, \cH^{[k]}(1)\right)}{\image H^0_{\rig}\left(\bar{Y}, \cH^{[k]}(1)\right) }
    \rTo H^1_{\rig, c-ss}\left( \bar{Y}^{\ord}, \cH^{[k]}(1)\right) 
    \rTo H^1_{\rig}\left(\bar{Y}, \cH^{[k]}(1)\right)
    \rTo0\]
    where the exactness at the right-hand end is a consequence of the fact that $\bar Z$ is zero-dimensional, so its $H^1_{\rig}$ vanishes. So the \emph{existence} of a lift is obvious, and to check uniqueness, it suffices to check that the $T'(\ell) = 1 + \ell^{k+1} \langle \ell^{-1} \rangle$ eigenspace is zero in the first group in the above sequence.
    
    However, the systems of Hecke eigenvalues appearing in $H^0_{\rig}\left(\bar{Z}, \cH^{[k]}(1)\right)$ are well-understood. If $k = 0$ there is a 1-dimensional subspace generated by the constant function, which is exactly the image of $H^0_{\rig}(\bar Y, \Qp(1))$; if $k \ge 1$, this image is zero. In either case, the remaining eigenspaces correspond to the Hecke eigenvalues of cusp forms of level $\Gamma_1(N) \cap \Gamma_0(p)$ which are new at $p$. Hence the Hecke operators $T'(\ell)$ cannot act via the above system of eigenvalues on any non-zero element in this quotient.
    
    Since the Hecke operators commute with the Frobenius, and the Eisenstein class $\Eis^k_{\rig}$ lies in the $\varphi = 1$ eigenspace and in the $U'(\ell) = 1$ eigenspaces for $\ell \mid N$, it follows that the same is true of the lifted class.
   \end{proof}

   Via the Poincar\'e duality pairing 
   \[ 
   H^1_{\rig, c-\partial}\left(\bar Y^{\ord}, \cH^{(k)}\right) \times H^1_{\rig, c-ss}\left(\bar Y^{\ord}, \cH^{[k](1)}\right) \to \Qp, 
   \]
   we can therefore regard this ``compactified'' Eisenstein class as a linear functional on $H^1_{\rig, c-\partial}\left(\bar Y^{\ord}, \cH^{(k)}\right)$, extending the linear functional $\Eis^k_{\rig, N}$ on $H^1_{\rig, c}(\bar Y, \cH^{(k)})$. As we have seen above, the space $H^1_{\rig, c-\partial}\left(\bar Y^{\ord}, \cH^{(k)}\right)$ can be computed in terms of overconvergent modular forms.

   \begin{definition}
    We define the \emph{critical-slope Eisenstein quotient} of $S^\dagger_{k + 2}(N, L)$ to be the maximal $L$-vector space quotient of $S^\dagger_{k + 2}(N, L) / \theta^{k + 1}\left(S_{-k}^\dagger(N, L)\right)$ on which the following Hecke operators are zero:
    \begin{itemize}
     \item the Hecke operators $T(\ell) - 1 - \ell^{k + 1} \langle \ell \rangle$ for primes $q \nmid Np$;
     \item the operators $U(\ell) - 1$ for $\ell \mid N$;
     \item the operator $U(p) - p^{k + 1} \langle p \rangle$.
    \end{itemize}
   \end{definition}

   \begin{proposition}
    \label{prop:Eisclasseigen}
    Under the isomorphism
    \[
    H^1_{\rig, c-\partial}\left(\bar Y^{\ord}, \cH^{(k)}\right) \otimes_{\Qp} L \cong \frac{S_{k+2}^\dagger(N, L) }{\theta^{k+1}\left(S_{-k}^\dagger(N, L)\right)},
    \]
    the linear functional on $S^\dagger_{k+2}(N, L)$ given by pairing with $\widetilde{\Eis}^k_{\rig, N}$ factors through projection to the critical-slope Eisenstein quotient.\qed
   \end{proposition}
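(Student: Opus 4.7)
The plan is to show that the linear functional $\lambda(x) := \langle x, \widetilde{\Eis}^k_{\rig, N}\rangle$ vanishes on $\image(T(\ell) - 1 - \ell^{k+1}\langle\ell\rangle)$ for $\ell \nmid Np$, on $\image(U(\ell) - 1)$ for $\ell \mid N$, and on $\image(U(p) - p^{k+1}\langle p\rangle)$. The tool throughout is Poincar\'e duality, combined with the explicit Hecke and Frobenius eigenvalues of $\widetilde{\Eis}^k_{\rig, N}$ established in the preceding lemma.

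For the Hecke operators away from $p$, I would invoke the standard adjointness relations for the Poincar\'e pairing on $Y_1(N)$: the operators $T(\ell)$, $U(\ell)$, $\langle\ell\rangle$ acting on $H^1_{\rig, c-\partial}\left(\bar Y^{\ord}, \cH^{(k)}\right)$ are adjoint under the pairing to $T'(\ell)$, $U'(\ell)$, $\langle\ell^{-1}\rangle$ respectively on $H^1_{\rig, c-ss}\left(\bar Y^{\ord}, \cH^{[k]}(1)\right)$. Substituting $y = \widetilde{\Eis}^k_{\rig, N}$ and using the eigenvalues from the lemma yields
\[
\lambda(T(\ell) x) = \lambda(x) + \ell^{k+1} \lambda(\langle\ell\rangle x),\qquad \lambda(U(\ell) x) = \lambda(x),
\]
which gives the first two vanishing conditions.

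For the prime $p$, I would route the argument through the Frobenius condition $\varphi \widetilde{\Eis}^k_{\rig, N} = \widetilde{\Eis}^k_{\rig, N}$ instead. Frobenius-compatibility of the duality pairing, together with the trivial action of $\varphi$ on the target $\Qp$, gives $\lambda \circ \varphi = \lambda$, so that $\lambda$ vanishes on $\image(1-\varphi)$. Under the identification \eqref{eq:rigidcoho3} with overconvergent forms, $\varphi$ becomes $p^{k+1}\langle p\rangle V_p$, and the elementary relation $U_p V_p = 1$ on $q$-expansions produces the operator identity
\[ U(p) - p^{k+1}\langle p\rangle = U(p)(1-\varphi). \]
Since $U(p)$ commutes with $\varphi$ on this cohomology (both being induced by correspondences compatible with the $p$-power Frobenius on the special fibre), $U(p)$ preserves $\image(1-\varphi)$, and so $\lambda$ also annihilates $\image(U(p) - p^{k+1}\langle p\rangle)$, completing the proof.

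The hard part is not the algebraic manipulation but verifying that the various equivariance properties of Poincar\'e duality really do hold for our non-standard cohomology groups (compact supports towards the cusps but not towards the supersingular locus, paired with its dual variant). This reduces to careful bookkeeping with the correspondence conventions of \cite{KLZ1a} and the Tate twists of Section \ref{sect:rigidcoho}, and should be routine given the set-up. The commutation of $U(p)$ and $\varphi$ is likewise standard but worth recording explicitly, as it is precisely what converts the Frobenius-invariance of $\widetilde{\Eis}^k_{\rig, N}$ into the desired $U(p)$-condition on the dual side.
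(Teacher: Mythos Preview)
Your argument is correct and follows essentially the same approach as the paper: adjointness of Hecke operators under Poincar\'e duality handles the primes $\ell \ne p$, and $\varphi$-invariance of $\widetilde{\Eis}^k_{\rig, N}$ handles the prime $p$. The only cosmetic difference is that the paper works directly with the adjoint $\varphi^{-1} = p^{-1-k}\langle p\rangle^{-1} U(p)$ on the $c\text{-}\partial$ side, which yields the $U(p)$ condition without the extra factorisation $U(p)(1-\varphi)$ and commutation step you introduce; both routes are valid.
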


   \begin{proof}
    For $\ell \ne p$, the operators $T'(\ell) - 1 - \ell^{k + 1} \langle \ell^{-1} \rangle$ and $U'(\ell) - 1$ annihilate the Eisenstein class, so the linear functional given by pairing with this class must factor through the quotient where the adjoints of these operators act as 0.
    
    To see how the linear functional interacts with $U(p)$, we use the fact that the Eisenstein class is invariant under $\varphi$, whose adjoint is $\varphi^{-1} = p^{-1-k} \langle p \rangle^{-1} U(p)$.  So the linear functional factors through the cokernel of the map $1 - \varphi^{-1} = 1 - p^{-1-k} \langle p^{-1} \rangle U(p)$.
   \end{proof}

   Note that for any character $\chi: (\ZZ / N\ZZ)^\times \to \overline{\QQ}_p^\times$ such that $\chi(-1) = (-1)^k$, the $\chi$-eigenspace for the diamond operators in the critical-slope Eisenstein quotient is 1-dimensional. Moreover, the Eisenstein class defines a non-zero linear functional on each such eigenspace. Hence Proposition \ref{prop:Eisclasseigen} characterises $\widetilde\Eis{}^k_{\rig, N}$ up to a unit in $\overline{\QQ}_p^\times$ for each character $\chi$ of the appropriate sign.

  \subsection{The ``Eisenstein period''}

   As well as the critical-slope Eisenstein quotient described above, there is also a critical-slope Eisenstein \emph{subspace} of $S_{k+2}^\dag(N, L)$, defined as the largest subspace where the above operators act as zero. This is of course spanned by classical modular forms: for each $\chi$ of the appropriate sign (non-trivial if $k = 0$) we can consider the level $N$ Eisenstein series
   \[ E^{(k+2)}_{\chi} = \frac{L(\chi, -1-k)}{2} + 
   \sum_{n \ge 1} q^n \Bigg( \sum_{\substack{d \mid n \\ (d, N) = 1}} \chi(d) d^{k+1}\Bigg). 
   \]
   Then $E^{(k+2)}_{\mathrm{crit}, \chi} \coloneqq E^{(k+2)}_{\chi}(\tau) - E^{(k+2)}_{\chi}(p\tau)$ is a classical form of level $\Gamma_1(N) \cap \Gamma_0(p)$, which vanishes at all the cusps contained in the ordinary locus, and is therefore in $S^\dagger_{k+2}(N, \overline{\QQ}_p)$. This form spans the $\chi$-isotypical part of the critical-slope Eisenstein subspace.
   
   It follows from the computations of Bella\"iche \cite{bellaiche11a} that the map from the critical-slope subspace to the critical-slope quotient is an isomorphism if and only if a certain value of the $p$-adic Dirichlet $L$-function of $\chi$ is non-zero; otherwise, this map is the zero map. In this section, we shall give an alternative proof of this result, by computing explicitly the Poincar\'e duality pairing of the two classes involved. Together with the results of the previous section, this completes the proof of Theorem A from the introduction. 
   
   \begin{proposition}
    Suppose $\chi$ is primitive modulo $N$. Then we have
    \[ \left\langle \widetilde\Eis{}^k_{\rig, N}, E^{(k+2)}_{\mathrm{crit}, \chi} \right\rangle = 
    \frac{(-1)^{k+1} k! N^k}{4 G(\chi^{-1})} L_p(\chi^{-1}, 1+k) L(\chi, -1-k).\]
   \end{proposition}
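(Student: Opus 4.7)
My plan is to establish the identity in two main steps: first, a Frobenius reduction to strip off the $p$-stabilization and reduce to a ``bare'' pairing of the Eisenstein class with the un-stabilized $E^{(k+2)}_\chi$; and second, a residue-calculus computation of this bare pairing using an explicit Coleman representative of the rigid Eisenstein class near the cusp $\infty$.

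\emph{Step 1: Frobenius reduction.} Since $E^{(k+2)}_{\mathrm{crit},\chi} = (1 - V_p) E^{(k+2)}_\chi$ at the $q$-expansion level, and the rigid Frobenius $\varphi$ acts as $p^{k+1}\chi(p) V_p$ on the $\chi$-eigenspace of $M^\dagger_{k+2}/\theta^{k+1}(M^\dagger_{-k}) \cong H^1_\rig(\bar Y^{\ord}, \cH^{(k)})\otimes L$, the $\varphi$-invariance of $\widetilde\Eis{}^k_{\rig, N}$ combined with Frobenius-equivariance of Poincar\'e duality should give
\[ \left\langle \widetilde\Eis{}^k_{\rig,N},\, E^{(k+2)}_{\mathrm{crit},\chi}\right\rangle = \left(1 - \chi^{-1}(p)\,p^{-(k+1)}\right)\cdot \Lambda, \]
where $\Lambda$ denotes a suitably extended pairing of $\widetilde\Eis{}^k_{\rig,N}$ with $E^{(k+2)}_\chi$. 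Some care is needed here since $E^{(k+2)}_\chi$ itself is not in $S^\dagger_{k+2}$; one can either extend the Poincar\'e pairing to the larger cohomology group $H^1_\rig(\bar Y^{\ord},\cH^{(k)})$, or compute $\Lambda$ intrinsically as a residue of an explicit representative of the Eisenstein class, as in the next step.

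\emph{Step 2: Residue calculation and identification with $L$-values.} I would next compute $\Lambda$ using an explicit Coleman-analytic representative of $\Eis^k_{\rig,N}$ on a formal tubular neighborhood of the cusp $\infty$ in $X_1(N)^{\rig}$; such a representative comes from the Siegel-unit / Eisenstein-series constructions in \cite[\S 4]{KLZ1a}. The resulting residue computation factors $\Lambda$ as the product of the constant term $L(\chi,-1-k)/2$ of $E^{(k+2)}_\chi$ at $\infty$ with a ``period'' of the Eisenstein class at $\infty$; the latter, via the Mellin-transform description of the Kubota--Leopoldt $p$-adic $L$-function, equals $(1-\chi^{-1}(p)p^{-(k+1)})^{-1} L_p(\chi^{-1},1+k)$ up to a combinatorial scalar. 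This scalar works out to $(-1)^{k+1} k! N^k / (2 G(\chi^{-1}))$, with $k!$ coming from the quasi-isomorphism $\mathrm{BGG}^\bullet \to \mathrm{DR}^\bullet$ of Proposition~\ref{prop:hypercoho}, $N^k$ and $G(\chi^{-1})^{-1}$ from the $N$-th-root-of-unity twists at the cusps of $Y_1(N)$, and the sign $(-1)^{k+1}$ from the dual/Tate twist in $\cH^{[k]}(1)$. Combining the Euler factor from Step~1 with this $\Lambda$ cancels $(1-\chi^{-1}(p)p^{-(k+1)})$ and produces precisely the claimed formula.

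\emph{Main obstacle.} The delicate part is the explicit residue computation in Step~2, namely the identification of the Coleman period of $\widetilde\Eis{}^k_{\rig,N}$ at $\infty$ with the Kubota--Leopoldt value $L_p(\chi^{-1},1+k)$. Conceptually this reflects the well-known principle that a Coleman integral of an Eisenstein class \emph{is} a $p$-adic $L$-function, but matching the Siegel-unit normalization of $\Eis^k_{\rig,N}$ used in \cite{KLZ1a} with the standard conventions for $L_p(\chi^{-1},s)$, and keeping track of the Gauss-sum and cusp-width bookkeeping, is where the bulk of the technical work lies.
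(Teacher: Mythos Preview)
Your plan has the right overall shape --- a residue computation in rigid cohomology --- but it misses the key structural insight that drives the paper's argument, and the residues are being taken at the wrong cusps.

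The paper does \emph{not} strip off the $p$-stabilisation. On the contrary, the whole point of working with $E^{(k+2)}_{\mathrm{crit},\chi}$ rather than the un-stabilised $E^{(k+2)}_\chi$ is that the former is \emph{exact} as an overconvergent differential: it is $\theta^{k+1}$ of the ordinary weight $-k$ Eisenstein series $F^{(-k)}_{\ord,\chi}$. Writing $E^{(k+2)}_{\mathrm{crit},\chi} = \nabla A$ with $A$ the image of $F^{(-k)}_{\ord,\chi}$ in $H^0(X^{\rig}, j^\dagger \cH^{(k)})$, integration by parts converts the pairing (which is a priori a sum of residues over the \emph{supersingular} points, since that is where the primitive $G$ of the Eisenstein class lives) into a sum of residues of $\{A,\mu\}$ over the \emph{cusps}. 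Your Step~1 discards exactly this structure: the un-stabilised $E^{(k+2)}_\chi$ is neither in $S^\dagger_{k+2}$ nor exact, so neither the pairing $\Lambda$ nor a residue-transfer argument is available for it. The difficulty you flag (``some care is needed here since $E^{(k+2)}_\chi$ itself is not in $S^\dagger_{k+2}$'') is not a technicality but the heart of the matter.

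Second, once one has transferred to a sum over cusps, the non-zero contributions come from the cusps of width $N$ --- those lying over $0 \in X_0(N)$ --- and \emph{not} from $\infty$. Both $F^{(-k)}_{\ord,\chi}$ and the differential $\mu$ representing the Eisenstein class vanish at all cusps of width $<N$. So your plan to compute a ``Coleman period at $\infty$'' would yield zero. The $p$-adic $L$-value $L_p(\chi^{-1},1+k)$ enters the formula not as an Euler factor from Frobenius reduction (note $s=1+k$ is a non-critical point, so there is no simple relation to a classical $L$-value) but as the constant term of the ordinary $p$-adic Eisenstein series $F^{(-k)}_{\ord,\chi}$ at the width-$N$ cusps; the factor $L(\chi,-1-k)$ appears as the constant term of the dual Eisenstein differential at those same cusps. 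Summing $\chi(c)$ times these constant terms over $c \in (\ZZ/N\ZZ)^\times/(\pm 1)$ then gives the stated formula, with the $(-1)^k k!$ coming from the $\mathrm{BGG}^\bullet \to \mathrm{DR}^\bullet$ normalisation and the $N^k/G(\chi^{-1})$ from the constant-term computation at the width-$N$ cusps.
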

   
   Here $G(\chi^{-1}) = \sum_{u \in (\ZZ / N\ZZ)^\times} \chi(u)^{-1} \zeta_N^{u}$ is the Gauss sum, and $L_p(\chi, s)$ denotes the $p$-adic $L$-function of $\chi$, so that $L_p(\chi, s) = (1 - p^{-s} \chi(p)) L(\chi, s)$ when $s \le 0$ is an integer such that $(-1)^s = -\chi(-1) = (-1)^{k+1}$. Thus the right-hand side of the above formula is the product of a critical $L$-value and a non-critical $p$-adic $L$-value.
   
   \begin{remark}
    This $p$-adic $L$-value could potentially be zero; this is exactly the pathological case identified by Bella\"iche in which the critical-slope Eisenstein generalised eigenspace is non-semisimple of dimension $> 1$. If $k = 0$ then this does not occur, since $L_p(\chi^{-1}, 1)$ is the $p$-adic logarithm of a cyclotomic unit, and therefore non-zero.
   \end{remark}
   
   We shall give only an outline of the proof of the proposition. Firstly, we note that the Poincar\'e duality pairing can be expressed in terms of residues. Classes in $H^1_{c-\partial}(\bar Y^{\ord}, \cH^{(k)})$ are represented by $\cH^{(k)}$-valued overconvergent 1-differentials on $X^{\ord}$; while classes in $H^1_{c-ss}(\bar Y^{\ord}, \cH^{[k]}(1))$ are represented by pairs $[\mu, G]$, where $\mu$ is an overconvergent $\cH^{[k]}$-valued 1-differential on $Y^{\ord}$ with at most simple poles at the cusps, and $G$ is a section of $\cH^{[k]}$ over the ``infinitesimal boundary'' of the supersingular region $Y^{\mathrm{ss}}$ satisfying $\nabla G = \mu$. One then finds that the Poincar\'e duality pairing is given, up to sign, by the formula
   \[ \Big\langle [\omega], [\mu, G] \Big\rangle = \sum_{x \in Y^{\mathrm{ss}}} \operatorname{Res}_{x}(\{ \omega,  G\} ),\]
   where $\{,\}$ denotes the pairing $\cH^{(k)} \times \cH^{[k]} \to L$, and $\operatorname{Res}_x$ denotes the residue map at $x$.
   
   In our case, $\omega$ is the class of the differential associated to the form $E^{(k+2)}_{\mathrm{crit}, \chi}$, which is exact; we may write it as $\nabla(A)$ for some overconvergent $\cH^{(k)}$-valued analytic function on $X^{\ord}$. Since $\operatorname{Res}_x( \{ \nabla A, G\}) = -\operatorname{Res}_x (\{ A, \nabla G\}) = -\operatorname{Res}_x (\{ A, \mu\})$, and the sum of the residues of the differential $\{ A, \mu\}$ must be zero, we can write this as
   \[ 
   \Big\langle [\nabla A], [\mu, G]\Big\rangle = \sum_{c \in C} \operatorname{Res}_c( \{A, \mu\}),
   \]
   where $C$ is the set of cusps. We may take for $A$ the image in $H^0\left(X^{\rig}, j^\dag \operatorname{DR}^1(\cH^{[k]})(1)\right) \otimes_L \overline{L}$ of the ordinary Eisenstein series of weight $-k$,
   \[ F^{(-k)}_{\ord, \chi} = (?) + \sum_{n \ge 1} \Bigg( \sum_{\substack{dd' = n \\ (d, N) = (d'\!,\, p) = 1}} \chi(d) (d')^{-1-k}\Bigg) q^n, \]
   where $? = \tfrac{1}{2}\zeta_p(1+k)$ if $\chi = 1$ and $? = 0$ otherwise. In terms of the BGG complex, the pairing beween $\cH^{[k]}$ and $\cH^{(k)} \otimes \Omega^1$ is induced by the pairing 
   \[ \{-, -\} : \omega^{-k} \times \omega^{k+2}(-C) \to \omega^2(-C) = \Omega^1, \quad \{f, g\} = (-1)^k k! f g, 
   \]
   so we are reduced to computing
   \[ (-1)^k k! \sum_{c \in C} F^{(-k)}_{\ord, \chi}(c) \cdot F^{(k+2)}_{0,1/N}(c). \]
   It is well-known that $X_1(N)$ has exactly $\tfrac{1}{2}\phi(d)\phi(N/d)$ cusps of width $d$ for each integer $d \mid N$, and both Eisenstein series vanish at all cusps of width $< N$. The cusps of width $N$ are exactly those lying above the cusp $0$ of $X_0(N)$, and they biject with $(\ZZ / N \ZZ)^\times  / (\pm 1)$. If $c$ is an element of this quotient, then the constant terms of the two Eisenstein series are respectively $\frac{N^{k+1}\chi(c)}{2G(\chi^{-1})} L_p(\chi^{-1}, 1+k)$ and $-N^{-1} \left( \sum_{\substack{n = c \bmod N}} n^{-s}\middle)\right|_{s = -1-k}$, and summing over $c \in (\ZZ / N \ZZ)^\times  / (\pm 1)$ gives the result.
   
   \begin{remark}
    The above formula has a complex-analytic counterpart. The Eisenstein series $E^{(k+2)}_{\mathrm{crit}, \chi}$ vanishes at every $p$-ordinary cusp of the modular curve of level $\Gamma_1(N) \cap \Gamma_0(p)$, i.e. every cusp above the cusp $\infty$ of $\Gamma_0(p)$. The Atkin--Lehner involution $W_{Np}$ interchanges these with the cusps above $0$, so the product $E^{(k+2)}_{\mathrm{crit}, \chi} \cdot W_{Np}\left(E^{(k+2)}_{\mathrm{crit}, \chi}\right)$ vanishes at every cusp and the Petersson product
    \[ \left\langle E^{(k+2)}_{\mathrm{crit}, \chi}, W_N\left(E^{(k+2)}_{\mathrm{crit}, \chi}\right)\right\rangle\]
    is well-defined. Using the well-known fact that $\langle f, g \rangle$ is the residue at $s = k+2$ of the Rankin--Selberg $L$-function $L(\overline{f}, g, s)$ (up to an explicit non-zero constant factor), one can compute the above pairing as a product of various explicit constants and the quantity $L(\chi^{-1}, 1+k) L(\chi, -1-k)$. 
   \end{remark}

\subsection{Small levels}

In order to compute explicit examples, it will be convenient to relax the assumption that $N \ge 4$. Of course the modular curve $Y_1(N)$ does not exist as a fine moduli space for $N \le 3$, so we shall use the following workaround: we choose an auxilliary prime $q$ not dividing $Np$, and form the modular curve of level $\Gamma_1(N) \cap \Gamma(q)$. The cohomology groups of this curve (and its compactifications, special fibres, etc) will all carry an action of $\GL_2(\mathbf{F}_q)$, and we simply project to the invariants under the action of this group. With these conventions, we can define $\Eis^k_{\rig, N}$ and $\widetilde\Eis{}^k_{\rig, N}$ for any $N \ge 1$ and $k \ge 0$. (Note that if $N \le 2$ and $k$ is odd, or if $N = 1$ and $k = 0$, then these classes are both 0.)

 \section{Preliminaries on Hilbert modular forms}
  
  Virtually all of the theory of overconvergent modular forms and rigid cohomology described in \S\ref{sect:OCMF} can be generalised to the setting of Hilbert modular forms (although in the present paper we shall not need to consider holomorphic projection operators in the Hilbert setting). We shall let $F$ denote a real quadratic field. A \emph{weight} will be a quadruple of integers $\mu = (r_1, r_2, t_1, t_2)$ such that $r_1 + 2t_1 = r_2 + 2t_2$. 
  
  \subsection{Nearly-holomorphic Hilbert modular forms}
  
   As in \cite{LLZ16}, we interpret Hilbert modular forms of level $U \subset \GL_2(\AA_{F, f})$ as functions on the quotient $(\GL_2(\AA_{F, f}) \times \cH_F) / U $ which are holomorphic on each coset of $\cH_F$, and transform appropriately under left translation by $\GL_2^+(F)$. The restriction of any such function to $\cH$ has a \emph{Fourier--Whittaker expansion}
   \[ 
    \cF(\tau_1, \tau_2) = \sum_{\lambda \gg 0 } \sigma_1(\lambda)^{-t_1} \sigma_2(\lambda)^{-t_2} c(\lambda, \cF) \exp\left( 2 \pi i \left[\tau_1 \sigma_1(\lambda) + \tau_2 \sigma_2(\lambda)\right]\right),
   \]
   where the Fourier--Whittaker coefficients $c(-, \cF)$ are smooth functions on $\mathbf{A}_F^\times$.
   
   \begin{definition} 
    Define a \emph{nearly-holomorphic Hilbert modular form} over $F$, of weight $\mu$, degree $\le (n_1, n_2)$ and level $U \subset \GL_2(\AA_{F, f})$, to be a $C^\infty$ function
    \[ f: (\GL_2(\AA_{F, f}) \times \cH_F) / U \to \mathbf{C}\]
    which transforms appropriately under left translation by $\GL_2^+(F)$, and whose restriction to every coset of $\cH_F$ can be written in the form 
    \[ \sum_{i_1 = 0}^{n_1} \sum_{i_2 = 0}^{n_2} f_{i_1 i_2}(\tau_1, \tau_2) (\Im \tau_1)^{-i_1} (\Im \tau_2)^{-i_2} \]
    with the $f_{i_1i_2}$ holomorphic and bounded at $\infty$. We write  $M_{\mu}^{\le (n_1, n_2)}(\fN, \CC)$ for the space of such forms.
   \end{definition}
   
   As in the case $F = \QQ$, we have two notions of cuspidality: one can require that all the $f_{i_1 i_2}$ vanish at $\infty$ (strong cuspidality) or only that $f_{00}$ does so (weak cuspidality). We write $\SS_{\mu}^{\le (n_1, n_2)}(\fN, \CC)$ for the subspace of strongly cuspidal forms.
   
   \begin{note}
    There are two Shimura--Maass derivative operators $\delta_1$ and $\delta_2$, one for each real place. The operator $\delta_1$ is a map
    \[ 
     M_{\mu}^{\le (n_1, n_2)}(\fN, \CC) \to M_{\mu + (2, 0, -1, 0)}^{\le (n_1 + 1, n_2)}(\fN, \CC) 
    \]
    given by $\tfrac{1}{2\pi i}\left( \tfrac{\partial}{\partial \tau_1} + \tfrac{r_1}{2i \Im(\tau_1)}\right)$ on each coset of $\cH_F$; and similarly for $\delta_2$.
   \end{note} 
   
   These spaces also have a geometric interpretation. Let $Y_1(\fN)$ be the Hilbert modular surface, and $Y_1^*(\fN)$ the finite covering of a subset of the components of $Y_1(\fN)$ described in \cite{LLZ16}. On $Y_1^*(\fN)$ the $\mathscr{G}$-equivariant line bundle $\omega^{(r_1, r_2)}$ is defined for any $r_i \in \ZZ$, and if $r_i \ge 0$ this embeds in the vector bundle $\cH^{(r_1, r_2)}$. The tensor product
   \[ \cH^{(n_1, n_2)} \otimes \omega^{(r_1-n_1, r_2-n_2)} \otimes \mathrm{det}^{(t_1, t_2)} \]
   (where $\mathrm{det}$ denotes the trivial line bundle with its $\mathscr{G}$-equivariant structure twisted by the determinant map) descends to $Y_1(\fN)$; we denote the descent by $\cH^{(\mu, n_1, n_2)}$. Then, for any characteristic 0 field $L$ containing the $N$-th roots of 1, where $N$ is the integer such that $\fN \cap \ZZ = N\ZZ$, we may define
   \[ M_{\mu}^{\le (n_1, n_2)}(\fN, L) = H^0\left( X_1(\fN),  \cH^{(\mu, n_1, n_2)}\right), \SS_{\mu}^{\le (n_1, n_2)}(\fN, L) = H^0\left( X_1(\fN), \cH^{(\mu, n_1, n_2)}(-C)\right)\]
   where $X_1(\fN)$ denotes the smooth toroidal compactification of $Y_1(\fN)$, and $C$ denotes its boundary divisor; and this is consistent with the previous definition when $L = \CC$.
  
   Finally, if $N$ is as above, there is a pullback map
   \[ \iota^*: M^{\le (n_1, n_2)}_{\mu}(\fN, L) \to M^{\le (n_1 + n_2)}_{r_1 + r_2}(N, L)
   \]
   given by setting $\tau_1 = \tau_2 = \tau$.

  \subsection{Overconvergent and nearly-overconvergent $p$-adic Hilbert modular forms}

   We choose a prime $p \nmid \fN$ unramified in $F$, a finite extension $L / \Qp$ containing $\mu_N$, and an embedding $\sigma_1: F \into L$. 
   
   \subsubsection*{Overconvergent forms} If $\mu = (r_1, r_2, t_1, t_2)$ is a weight, then we define spaces $M^\dagger_{\mu}(\fN, L) \supseteq S^\dagger_{\mu}(\fN, L)$ of overconvergent $p$-adic Hilbert modular (resp. cusp) forms of level $U_1(\fN)$ and weight $\mu$ with coefficients in $L$ as in \cite[\S 3]{tianxiao}:
   \[ 
    M_{\mu}^{\dagger}(\fN, L) = H^0\left( X_1(\fN)^{\rig}_L, j^\dagger \omega^{(\mu)}\right),
    \quad
    S_{\mu}^{\dagger}(\fN, L) = H^0\left( X_1(\fN)^{\rig}_L, j^\dagger \omega^{(\mu)}(-C)\right).
   \] 
   
   These spaces have the following properties:
   \begin{itemize}
    
    \item Overconvergent forms $\cF \in M^\dagger_\mu(\fN, L)$ have Fourier--Whittaker coefficients $c(\fm, \cF) \in L$, for every fractional ideal $\fm \subseteq \fd^{-1}$; and if $\cF$ is cuspidal (or if $(r_1, r_2) \ne (0, 0)$) then $\cF$ is uniquely determined by its Fourier--Whittaker coefficients.
    
    \item The space $M^\dagger_{\mu}(\fN, L)$ has an action of the normalised Hecke operators $\cT(\fq)$ for each prime $\fq \nmid p \fN$, and $\cU(\fq)$ for $\fq \mid p \fN$, having the expected effect on Fourier--Whittaker coefficients; in particular, we have \(  c(\fm, \cU(\fq)\cF) = c(\fm\fq, \cF) \). These operators preserve the subspaces of cusp forms.
    
    \item The operators $\cU(\fp)$ for $\fp \mid p$ have right inverses $\cV(\fp)$ satisfying
    \[ 
     c(\fm, \cV(\fp)\cF) = 
     \begin{cases}
      0 & \text{if $\fp \nmid \fm$,}\\
      c(\fm/\fp, \cF) & \text{if $\fp \mid \fm$.}
     \end{cases}
    \]
    
    \item The operator $\cU(p) = \prod_{\fp \mid p} \cU(\fp)$ is compact.
    
    \item If $r_1, r_2 \ge 0$, then $M_\mu^{\dagger}(\fN, L)$ contains the space of classical Hilbert modular forms of weight $\mu$ and level $U_1(\fN) \cap U_0(p)$ with coefficients in $L$ as a Hecke-invariant subspace (and the Fourier--Whittaker coefficients and Hecke operators coincide with the classical ones on this subspace).
   \end{itemize}
   
   We shall frequently omit the field $L$ and/or the level $\fN$ from the notation if these are clear from context. As with classical Hilbert modular forms, the spaces $S^\dagger_\mu$ are independent of $(t_1, t_2)$ up to a canonical isomorphism, so we shall also occasionally omit $(t_1, t_2)$ from the notation and just write $S^\dagger_{(r_1, r_2)}(\fN, L)$; this identification twists the Fourier--Whittaker coefficients and the actions of the operators $\cT(\fq)$ and $\cU(\fq)$ by a power of $\operatorname{Nm}(\fq)$. 
   
   \subsubsection*{Nearly-overconvergent forms} Exactly as in the elliptic modular case, we can define nearly-overconvergent spaces by replacing the line bundles $\omega^{(\mu)}$ with the larger vector bundles $\cH^{(\mu, n_1, n_2)}$, for $n_1, n_2 \ge 0$. The resulting spaces $M^{\dagger, \le (n_1, n_2)}_{\mu}(\fN, L) \supset \SS^{\dagger, \le (n_1, n_2)}_{\mu}(\fN, L)$ have actions of the operators $\delta_1$ and $\delta_2$.

  \subsection{Theta operators and rigid cohomology}
   
   We now assume $p = \fp_1 \fp_2$ is split in $F$, and (without loss of generality) $\fp_1$ is the prime corresponding to the embedding $\sigma_1: F \into L \subseteq \overline{\QQ}_p$.
   
   \begin{proposition}[Tian--Xiao] \
    \begin{enumerate}[(i)]
     \item \cite[Proposition 3.24]{tianxiao} Every slope of the operator $\cU(\fp_i)$ acting on $S_{\mu}^\dagger(\fN)$ is $\ge t_i$.
     \item \cite[Remark 2.17(1)]{tianxiao} If $r_1 \ge 2$, the operator $\delta_1^{(r_1 - 1)}$ is a injective map
     \[ 
      \Theta_1: S_{w_1(\mu)}^\dagger \into S_{\mu}^\dagger,
     \]
     where $w_1(\mu) = (2-r_1, r_2, t_1 + r_1 - 1, t_2)$, which preserves Fourier--Whittaker coefficients\footnote{Given our conventions for Fourier--Whittaker expansions, the fact that $\Theta_1$ preserves Fourier--Whittaker coefficients, while decreasing $t_1$ by $r_1 - 1$, amounts to stating that it acts on Fourier--Whittaker expansions in the same way as the operator $\left( \tfrac{1}{2\pi i} \tfrac{\partial}{\partial \tau_1}\right)^{r_1 - 1}$ on $\cH_F$.}, and commutes with the action of the normalised Hecke operators $\cT(\fq)$ and  $\cU(\fq)$. In particular, the image of $\Theta_1$ is a Hecke-invariant subspace of $S^\dagger_{\mu}$ on which every slope of $\cU(\fp_1)$ is $\ge t_1 + r_1 - 1$. The same holds mutatis mutandis for $\Theta_2$ if $r_2 \ge 2$.
    \end{enumerate}
   \end{proposition}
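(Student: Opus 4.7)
My plan is to address parts (i) and (ii) separately, following the geometric framework of \cite{tianxiao}.

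For part (i), I would exhibit an $\cO_L$-integral structure on $S_\mu^\dagger(\fN)$ preserved by the ``geometric'' (unnormalized) action of the correspondence defining $\cU(\fp_i)$; the normalization factor $p^{t_i}$ that appears when converting this to the Fourier--Whittaker action $c(\fm, \cF) \mapsto c(\fm \fp_i, \cF)$ is precisely what produces the slope bound $\geq t_i$. Concretely, one picks a basis of the integral lattice, computes the matrix of the unnormalized correspondence (which has $p$-integral entries by integrality of the correspondence over $\mathrm{Spec}\,\cO_L$), and applies standard Newton polygon considerations to conclude that $p^{-t_i}$ times an integral operator has slopes $\geq -t_i$ below the slopes of the original; after inverting signs, all slopes of $\cU(\fp_i)$ are $\geq t_i$.

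For part (ii), I would first verify the Fourier--Whittaker assertion and Hecke-compatibility by direct computation at an Archimedean place: writing $\delta_1 = \tfrac{1}{2\pi i}(\partial/\partial \tau_1) + \tfrac{r_1}{4\pi \Im \tau_1}$ and iterating $(r_1 - 1)$ times on a weight-$w_1(\mu)$ form, the polynomial-in-$\Im \tau_1$ corrections telescope (because the weight in the $\sigma_1$-direction is $2 - r_1$) and one is left with multiplication of $c(\lambda, \cF)$ by $\sigma_1(\lambda)^{r_1 - 1}$, which in the normalized conventions of the paper amounts to preservation of Fourier--Whittaker coefficients. Injectivity is then immediate, commutation with $\cT(\fq)$ and $\cU(\fq)$ for $\fq \neq \fp_1$ follows from the explicit formulae for these operators on Fourier--Whittaker coefficients, and the slope bound on the image under $\cU(\fp_1)$ follows by applying part (i) to forms of weight $w_1(\mu)$ (whose $t_1$-component is $t_1 + r_1 - 1$).

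The substantive geometric content, and the main obstacle, is the claim that $\delta_1^{r_1 - 1}$ lands in $S_\mu^\dagger$ rather than merely in the nearly-overconvergent space $M^{\dagger, \leq (r_1 - 1, 0)}_\mu$. This is the Hilbert analogue of Coleman's theorem on the theta operator. I would prove it by invoking the unit-root splitting $\cH^{(1,0)}|_{X_1(\fN)^{\ord}} = \omega^{(1,0)} \oplus \mathcal{U}_1$, which extends to a strict neighbourhood of the ordinary locus with controlled growth, to induce a decomposition of $\cH^{(r_1 - 1, 0)}$ on that neighbourhood; then an iterated Kodaira--Spencer computation would show that all ``non-holomorphic'' components of $\delta_1^{r_1 - 1} \cF$ vanish identically whenever $\cF$ has weight $w_1(\mu)$ with $r_1 \geq 2$, essentially because each application of the nilpotent Kodaira--Spencer map lowers the $\sigma_1$-weight by $2$ and $(r_1 - 1)$ applications applied to a weight-$(2 - r_1)$ section land in zero. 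Tracing overconvergence carefully through this splitting is the delicate part, and is what requires the full machinery developed in the cited references of Tian--Xiao.
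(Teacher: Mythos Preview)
The paper does not supply a proof of this proposition at all: it is stated with attribution to Tian--Xiao, with precise pointers to \cite[Proposition~3.24]{tianxiao} and \cite[Remark~2.17(1)]{tianxiao}, and no argument is given in the body of the paper. So there is no in-paper proof against which to compare your proposal.

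That said, your outline is a plausible reconstruction of how these facts are established. For (i), the integrality of the unnormalised (geometric) $U_{\fp_i}$-correspondence, together with the normalisation factor, is indeed the mechanism behind the slope bound. For (ii), you correctly isolate the substantive point: that $\delta_1^{r_1-1}$ applied to a weight-$w_1(\mu)$ overconvergent form is genuinely overconvergent of weight $\mu$, not merely nearly-overconvergent; and you are right that injectivity, Hecke-equivariance, and the slope statement for the image follow formally from this together with (i). One small correction: $\Theta_1$ does commute with $\cU(\fp_1)$ as well (not only with $\cU(\fq)$ for $\fq\ne\fp_1$); in the normalised Fourier--Whittaker picture this is immediate, since $\Theta_1$ preserves coefficients while $\cU(\fp_1)$ shifts the index by $\fp_1$.

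The one place your sketch is imprecise is the explanation of \emph{why} the non-holomorphic components vanish. The Kodaira--Spencer map is an isomorphism, not a nilpotent operator, and the vanishing is not produced by iterating it until it hits zero. A cleaner route (and the one actually used in \cite{tianxiao}, and invoked later in this paper in Proposition~\ref{prop:tianxiao}) is the BGG complex: there is an algebraic map of complexes $\mathrm{BGG}^\bullet_c(\cH^{(\mu-2)}) \hookrightarrow \mathrm{DR}^\bullet_c(\cH^{(\mu-2)})$ on $X$, and the compatibility of this map with the differentials is precisely the assertion that $\Theta_1$ lands in $\omega^{\mu}$. Because the BGG map is algebraic over $X$, overconvergence is automatic, without any delicate analysis of growth near the supersingular locus via the unit-root splitting.
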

   
   \begin{notation}
    Somewhat abusively, we shall write $U(\fp_i)$ for the operator $p^{-t_i} \cU(\fp_i)$ (for $i = 1, 2$), and $U(p) = U(\fp_1) U(\fp_2)$. Similarly, we write $V(\fp_i) = p^{t_i} \cV(\fp_i)$, and $V(p) = V(\fp_1) V(\fp_2)$.
   \end{notation}
  
   These operators are then well-defined on $S^{\dagger}_{(r_1, r_2)}$ (independent of the choice of the $t_i$). In this language, part (i) of the theorem states that these operators have all slopes $\ge 0$, and part (ii) states that $\Theta_1$ increases the slopes of $U(\fp_1)$ by $r_1 - 1$ (while leaving the slopes of $U(\fp_2)$ unchanged).
   
   The differential operators $\Theta_i$ have a geometric interpretation via rigid cohomology. To state this, we shall need to introduce some notation. Suppose $(r_1, r_2) = (k_1 + 2, k_2 + 2)$ with $k_i \ge 0$.
   
   \begin{notation} \
    \begin{itemize}
     \item Let $\cY$ be the smooth model over $\Zp$ of the Hilbert modular variety $Y_1(\fN)$.
     \item Let $\cX$ be the smooth toroidal compactification of $\cY$, and $X$ its generic fibre.
     \item Let $\mathcal{C} = \cX - \cY$ be the boundary, which is a relative simple normal crossing divisor over $\Zp$.
     \item Let $\bar X, \bar Y, \bar C$ be the special fibres of $\cX, \cY, \mathcal{C}$. 
     \item We let $\bar Z$ denote the closed subvariety of $\bar Y$ parametrising Hilbert--Blumenthal abelian surfaces which are non-ordinary at one or both of $\{\fp_1, \fp_2\}$ (the vanishing locus of the total Hasse invariant).
     \item We write $\bar Y^{\ord} = \bar Y - \bar Z$, and similarly $\bar X^{\ord}$.
     \item We write $\cH^{(\mu - 2)}$ for the F-isocrystal on $\cY$ corresponding to the algebraic representation of $\operatorname{Res}_{F / \QQ} \GL_2$ of weight $(k_1, k_2, t_1, t_2)$.
    \end{itemize}
   \end{notation}
   
   \begin{proposition}[Tian--Xiao, {\cite[Theorem 3.5 \& Lemma 4.11]{tianxiao}}]
    \label{prop:tianxiao}
    The complex of sheaves on $X$ given by
    \[ \mathrm{BGG}_c^\bullet\left(\cH^{(\mu-2)}\right) = 
    \left[ \omega^{w_1w_2(\mu)} \rTo^{(\Theta_2, -\Theta_1)}\omega^{w_1(\mu)} \oplus \omega^{w_2(\mu)} \rTo^{(\Theta_1, \Theta_2)} \omega^{\mu} \right](-C)\]
    maps quasi-isomorphically to the de Rham complex $\mathrm{DR}^\bullet_c\left(\cH^{(\mu-2)}\right)$ relative to the cuspidal divisor $C$; and taking overconvergent sections over the tube of $\bar X^{\ord}$ induces an isomorphism 
    \[
     \frac{S_{\mu}^\dagger(\fN, L)}{\Theta_1\left(S^\dagger_{w_1(\mu)}(\fN, L)\right) + \Theta_2\left(S^\dagger_{w_2(\mu)}(\fN, L)\right)} \cong H^2_{\rig, c-\partial}\left(\bar Y^{\ord}, \cH^{(\mu - 2)}\right) \otimes_{\Qp} L
    \]
    for any $p$-adic field $L$ containing the $N$-th roots of unity.
   \end{proposition}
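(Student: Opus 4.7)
The plan is to mimic the elliptic-modular argument leading to \eqref{eq:rigidcoho3}: first establish a BGG-type quasi-isomorphism of complexes of sheaves on the smooth toroidal compactification $X$, then compute the hypercohomology of its pullback to the tube $]\bar X^{\ord}[$, exploiting that $\bar X^{\ord}$ is affine.

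For the sheaf-theoretic step, I would construct the maps of complexes $\mathrm{BGG}_c^\bullet(\cH^{(\mu-2)}) \to \mathrm{DR}_c^\bullet(\cH^{(\mu-2)})$ on $X$ so that in each degree, the composition with the natural projections onto the appropriate line-bundle graded pieces of the Hodge filtration of $\cH^{(\mu-2)}$ is a specified non-zero constant, generalising the characterisation recalled after Proposition \ref{prop:hypercoho}. In particular, $\omega^{w_1 w_2(\mu)}(-C)$ and $\omega^\mu(-C)$ sit naturally inside $\cH^{(\mu-2)}(-C)$ and $\cH^{(\mu-2)} \otimes \Omega^2(-C)$ as the extremal Hodge strata (using $\Omega^2_X \cong \omega^{(2,2,-1,-1)}$ for the latter, a twist of the determinant of the Hodge bundle), and the middle-degree map is pinned down similarly by specifying its components onto $\omega^{w_1(\mu)}$ and $\omega^{w_2(\mu)}$. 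Commutativity with the differentials identifies the induced line-bundle operators with $\Theta_1, \Theta_2$ (with signs as displayed), via the Kodaira--Spencer isomorphism. To check quasi-isomorphism, I would filter both complexes by a suitable variant of the Hodge filtration: on associated graded pieces, the ``extra'' summands of $\mathrm{DR}_c^\bullet$ cancel in pairs between consecutive degrees, leaving only the BGG summands.

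For the analytic step, one takes $j^\dagger$-sections of the BGG complex. Since $\bar X^{\ord}$ is affine (the total Hasse invariant is a section of an ample line bundle on $\bar X$ with zero locus $\bar Z$), the rigid space $]\bar X^{\ord}[$ is quasi-Stein, so higher $j^\dagger$-cohomology of any coherent sheaf vanishes. Hence the hypercohomology of $j^\dagger \mathrm{BGG}_c^\bullet$ is computed by the three-term global-sections complex
\[ S_{w_1 w_2(\mu)}^\dagger(\fN, L) \rTo^{(\Theta_2, -\Theta_1)} S_{w_1(\mu)}^\dagger(\fN, L) \oplus S_{w_2(\mu)}^\dagger(\fN, L) \rTo^{(\Theta_1, \Theta_2)} S_\mu^\dagger(\fN, L), \]
and reading off $H^2$ yields the claimed cokernel description.

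The principal obstacle is the first step: the sheaf-theoretic BGG quasi-isomorphism requires careful control over the Hodge filtration of $\cH^{(\mu-2)}$ both in the interior $\cY$ and near the cuspidal boundary $\mathcal{C}$ (where the $-C$ twists interact delicately with the degeneration of the universal abelian surface), together with the combinatorics of the $\operatorname{Res}_{F/\QQ} \GL_2$ Weyl-group elements $w_1, w_2$ that index the summands. Once this is done, the rigid-analytic step is largely formal, modulo the non-trivial but standard affineness of $\bar X^{\ord}$.
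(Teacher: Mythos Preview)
The paper does not give its own proof of this proposition; it is quoted directly from Tian--Xiao, with the two citations corresponding roughly to your two steps. Your outline follows their strategy, and you have correctly identified the sheaf-theoretic BGG quasi-isomorphism (their Theorem~3.5) as the substantive first ingredient.

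There is, however, a genuine gap in your second step. Your claim that $\bar X^{\ord}$ is affine, justified by ``the total Hasse invariant is a section of an ample line bundle on $\bar X$'', is false. The toroidal boundary $\bar C \subset \bar X$ consists of configurations of rational curves lying over each cusp of the minimal compactification, and these curves are entirely contained in the ordinary locus: the semi-abelian degenerations parametrised by the boundary are tori, hence ordinary, so the Hasse invariant extends invertibly across $\bar C$. Since $\bar X^{\ord}$ contains complete curves, it cannot be affine. Equivalently, the Hodge bundle on $\bar X$ is only the pullback of an ample bundle along the contraction $\bar X \to \bar X^{\min}$; it is big and nef but has degree zero on every boundary component, so it is not ample. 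Thus the modular-curve argument ``$X_1(N)^{\ord}$ is affinoid, hence higher sheaf cohomology vanishes'' does not transfer verbatim to the surface case.

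The vanishing $H^i\big(X^{\rig}, j^\dagger \omega^{\nu}(-C)\big)=0$ for $i>0$ that you need is nonetheless true, but establishing it is exactly the content of the second reference (Lemma~4.11 of \cite{tianxiao}), whose proof requires a genuine extra argument (comparison with the minimal compactification, where the ordinary locus \emph{is} affine, together with control of the pushforward along $\bar X \to \bar X^{\min}$). You should invoke that result rather than the incorrect affineness claim.
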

  
   (As in the case of modular curves, one can define similarly groups $H^2_{\rig, c-ss}\left(\bar Y^{\ord}, \cH^{(\mu - 2)}\right)$ with compact support towards the supersingular locus, but we will not use them here.)
 
  \subsection{Rankin--Cohen brackets}
 
   Let $\cF$ be a holomorphic Hilbert cusp form of weight $(\underline{r}, \underline{t})$, where $\underline{r} = (r_1, r_2)$, and level $U_1(\fN)$. 

   \begin{propdef}[Rankin--Cohen, cf.~\cite{zagier94}]
    \label{propdef}
    In the above setting, for any $n \ge 0$, the function on $\cH_{\QQ}$ defined by
    \begin{align*}
     [\cF]_n &\coloneqq \sum_{a_1 + a_2 = n} (-1)^{a_1} \binom{r_1 + n - 1}{a_2} \binom{r_2 + n - 1}{a_1}\iota^*\left(  \delta_1^{a_1} \delta_2^{a_2}\cF\right) \\
     &= \sum_{a_1 + a_2 = n} (-1)^{a_1} \binom{r_1 + n - 1}{a_2} \binom{r_2 + n - 1}{a_1}\iota^*\left(  \theta_1^{a_1} \theta_2^{a_2}\cF\right)
    \end{align*}
    (where $\theta_j$ is the differential operator $\tfrac{1}{2\pi i} \tfrac{\partial}{\partial z_j} = q_j \tfrac{\partial}{\partial q_j}$ on $\cH_F$) is a holomorphic modular form of weight $r_1 + r_2 + 2n$ and level $U_{1}(N)$, where $\fN \cap \ZZ = N\ZZ$. We call this the \emph{$n$-th Rankin--Cohen bracket} of $\cF$.
   \end{propdef}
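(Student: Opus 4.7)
The plan is to exploit the two displayed expressions for $[\cF]_n$ in complementary ways: the $\delta$-form makes modularity essentially automatic, while the $\theta$-form is manifestly holomorphic, so it suffices to verify that the two expressions agree. Each Maass--Shimura operator $\delta_i$ increases the $i$-th weight by $2$, decreases $t_i$ by $1$, and preserves the parallel-weight condition; hence $\delta_1^{a_1}\delta_2^{a_2}\cF$ is a nearly-holomorphic Hilbert modular form of weight $(r_1+2a_1,\,r_2+2a_2,\,t_1-a_1,\,t_2-a_2)$ and degree $\le (a_1, a_2)$. Pulling back along the diagonal $\tau_1=\tau_2=\tau$ yields a nearly-holomorphic elliptic modular form of weight $r_1+r_2+2n$ and degree $\le n$ for the congruence group $\Gamma_1(N)$. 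As a linear combination of such forms, $[\cF]_n$ is \emph{a priori} a nearly-holomorphic modular form of the asserted weight and level, so only holomorphicity and the equality with the $\theta$-expression remain to be shown.

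Both remaining claims follow by substituting into the $\delta$-expression the standard identity
\[
\delta_i^{a_i} = \sum_{j_i=0}^{a_i}\binom{a_i}{j_i}\frac{\Gamma(r_i+a_i)}{\Gamma(r_i+a_i-j_i)}\left(-\frac{1}{4\pi\Im\tau_i}\right)^{j_i}\theta_i^{a_i-j_i},
\]
expanding, and restricting to $\tau_1=\tau_2=\tau$. Collecting by the total power of $(\Im\tau)^{-1}$, the zeroth-order piece reproduces the $\theta$-expression exactly, whereas the coefficient of $(\Im\tau)^{-j}$ for each $j\ge 1$ is a sum, over decompositions $a_1+a_2=n$ and $j_1+j_2=j$, of Rankin--Cohen weights against products of Pochhammer factors. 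A Chu--Vandermonde identity, applied after fixing $(j_1, j_2)$ and summing over $a_1$, forces each such coefficient to vanish, yielding simultaneously the equality of the two displayed formulae and the holomorphicity of $[\cF]_n$.

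The main obstacle is this combinatorial cancellation. Because the Rankin--Cohen weight $(-1)^{a_1}\binom{r_1+n-1}{a_2}\binom{r_2+n-1}{a_1}$ factors as a product of a function of $a_1$ and a function of $a_2$, the vanishing decouples into two independent one-variable identities, each a short Pochhammer/Vandermonde calculation — precisely the content of Zagier's classical Rankin--Cohen identity for elliptic forms, applied separately in each variable. Once that input is in place, the rest of the argument is formal; an alternative, more conceptual route, which would bypass the combinatorics entirely, is to identify $[\cF]_n$ up to an explicit nonzero constant with the holomorphic projection $\Pi^{\mathrm{hol}}(\iota^*(\delta_1^{a_1}\delta_2^{a_2}\cF))$ for any single term, but carrying this out requires verifying that the Rankin--Cohen coefficients match the projection coefficients, which is of comparable difficulty to the cancellation above.
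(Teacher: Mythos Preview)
Your approach is correct and matches the paper's: the paper likewise observes that the $\delta$-expression is nearly-holomorphic of the right weight and level, that the $\theta$-expression is visibly holomorphic, and that the equality of the two is the only substantive point. The sole difference is that the paper outsources this equality to \cite[Theorem 1]{lanphier08} rather than redoing the Zagier-type combinatorial cancellation you sketch; your direct argument is exactly what lies behind that citation.
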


   The equality of the two expressions for $[\cF]_n$ is part of \cite[Theorem 1]{lanphier08}. From the first expression one sees that $[\cF]_n$ is a nearly-holomorphic modular form of level $N$, weight $r_1 + r_2 + 2n$ and degree $\le n$, and from the second expression one sees that it is actually holomorphic. Note that $[\cF^\sigma]_n = (-1)^n [\cF]_n$, and that $[\cF]_0$ is just $\iota^*(\cF)$. Moreover, the brackets $[\cF]_n$ are unchanged if one twists $\cF$ by a power of the adele norm character.
   
   \begin{proposition}[Lanphier]
    We have
    \[ \iota^*\left( \delta_1^n \cF \right) =
    \sum_{j=0}^n \frac{(-1)^j\binom{n}{j}\binom{r_1 + n - 1}{n-j}}
    {\binom{r_1 + r_2 + 2j - 2}{j} \binom{r_1 + r_2 + n + j - 1}{n-j}}
    \delta^{n-j}\left( [\cF]_j\right). \]
   \end{proposition}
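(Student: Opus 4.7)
Both sides are nearly-holomorphic elliptic modular forms of weight $r_1+r_2+2n$ and degree $\le n$ on $\Gamma_1(N)$. The plan is to expand each side as a polynomial in the non-holomorphic variable $Y := -1/(4\pi \Im\tau)$ with coefficients that are holomorphic functions, and then match these coefficients to reduce the statement to a combinatorial identity in binomial coefficients. Throughout, I will use the facts that $\Im\tau_1 = \Im\tau_2 = \Im\tau$ on the diagonal and that $\theta \circ \iota^* = \iota^*\circ (\theta_1+\theta_2)$, since $q = q_1 q_2$ along $\tau_1=\tau_2$.

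\textbf{Step 1: expand the Maass--Shimura derivatives.} For any holomorphic form $g$ of weight $r$, one has the classical expansion
\[ \delta^m g \;=\; \sum_{i=0}^{m} \binom{m}{i}\, \frac{(r+m-1)!}{(r+m-1-i)!}\, Y^i\, \theta^{m-i} g, \]
and a formally identical Hilbert version for $\delta_1^n \cF$ involving $\theta_1^{n-i}$, $(r_1+n-1)!/(r_1+n-1-i)!$, and $Y_1 := -1/(4\pi \Im\tau_1)$. Applying these expansions to both sides of the proposed formula and restricting the Hilbert side to $Y_1 = Y$, both sides become polynomials in $Y$ of degree $\le n$ whose coefficients are explicit holomorphic pullbacks.

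\textbf{Step 2: reduce to holomorphic identities.} Equating the coefficient of $Y^i$ for each $i \in \{0,\dots,n\}$ reduces the proposition to a family of $n+1$ purely holomorphic identities; the $i=0$ case is
\[ \iota^*\left(\theta_1^{n}\cF\right) \;=\; \sum_{j=0}^{n} \alpha_j\, \theta^{n-j}\!\left([\cF]_j\right), \]
where $\alpha_j$ denotes the coefficient appearing in the statement, and the higher-$i$ cases are similar with shifted binomial and factorial factors. Using the uniqueness of the Shimura decomposition $M_r^{\le n} \cong \bigoplus_{j=0}^{n} \delta^j M_{r-2j}$ (valid in our weight range), the identities for $i \ge 1$ in fact follow formally from the $i=0$ case once one knows that the $Y$-expansions on both sides come from elements lying in the expected summands of this decomposition.

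\textbf{Step 3: reduce to a binomial identity.} Substituting the second (holomorphic) expression for $[\cF]_j$ from Proposition-Definition \ref{propdef} and applying $\theta^{n-j}$ via the commutation $\theta^{m}\circ\iota^* = \iota^*\circ(\theta_1+\theta_2)^m$, the right-hand side of the $i=0$ identity becomes a linear combination of $\iota^*(\theta_1^A\theta_2^B\cF)$ with $A+B = n$. Matching coefficients against the left-hand side $\iota^*(\theta_1^n\cF)$ reduces the whole proposition to a single closed-form binomial identity, which asserts that a certain Saalsch\"utz-type sum of products of three binomial coefficients vanishes unless $(A,B) = (n,0)$, in which case it equals $1$.

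\textbf{Main obstacle.} The heart of the proof is this combinatorial identity. It is a balanced hypergeometric summation, naturally related to the orthogonality relation for Jacobi polynomials, since the Rankin--Cohen coefficients are values of such polynomials. I would verify it either by Wilf--Zeilberger certification, by induction on $n$ (using $\delta_1^{n} = \delta_1\circ\delta_1^{n-1}$ together with the Leibniz-type rule for $\delta$ acting on $\delta^{n-j}([\cF]_j)$), or by invoking the explicit inversion of the Rankin--Cohen bracket decomposition established in \cite{lanphier08}; any of these routes isolates the purely combinatorial step and completes the proof.
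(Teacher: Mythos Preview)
The paper's own proof is a one-line citation to \cite[Theorem~1]{lanphier08}, together with the observation that Lanphier's identity --- proved there for products of two elliptic modular forms --- carries over verbatim to the Hilbert setting, since the operators $\delta_1,\delta_2,\delta$ and the Rankin--Cohen brackets satisfy identical formal relations in both cases. Your final fallback in Step~3 (``invoking the explicit inversion \dots established in \cite{lanphier08}'') is therefore exactly the paper's argument.

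Your more detailed outline is essentially a sketch of Lanphier's own computation, and is correct in substance. One point in Step~2 deserves explicit justification, however: the reduction from all $Y^i$ coefficients to the single $i=0$ case amounts to the claim that the map ``take the $Y^0$ part'' is injective on $M_{r_1+r_2+2n}^{\le n}$. This is true, but it does not follow from the uniqueness of the Shimura decomposition alone. The cleanest way to see it is that the $Y^0$ part of a nearly-holomorphic form is a \emph{quasimodular} form, and the subspaces $\theta^{n-j}(M_{r_1+r_2+2j})$ for $0\le j\le n$ are linearly independent inside the quasimodular forms of weight $r_1+r_2+2n$, since they occupy distinct levels of the depth (i.e.\ $E_2$-degree) filtration. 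Equivalently, the well-known bijection between nearly-holomorphic forms and quasimodular forms (sending $\sum_i f_i Y^i$ to $f_0$) is an isomorphism, not merely a surjection. With that sentence added, your Steps~1--3 give a genuine proof, and the Saalsch\"utz-type identity you isolate at the end is precisely the combinatorial heart of Lanphier's argument.
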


   \begin{proof}
    See \cite[Theorem 1]{lanphier08}. (Lanphier's result is stated for a product of two elliptic modular forms, but the same identity is valid in the Hilbert setting also.)
   \end{proof}
   
   \begin{corollary}
    If $\cF$ is a (holomorphic) Hilbert cusp form, then
    \[  
     [\cF]_n = (-1)^n\binom{r_1 + r_2 + 2n-2}{n} \left( \Pi_{\mathrm{hol}} \circ 
     \iota^* \circ \delta_1^n\right)(\cF).
    \]
   \end{corollary}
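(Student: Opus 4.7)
The plan is to read this off directly from Lanphier's identity (the preceding proposition) by applying the holomorphic projection operator $\Pi_{\mathrm{hol}}$ term by term. Since $\cF$ is a cusp form, each $[\cF]_j$ is a holomorphic cusp form of weight $r_1+r_2+2j$ (by Proposition-Definition \ref{propdef}), so $\delta^{n-j}[\cF]_j$ lies in $\SS_{r_1+r_2+2n}^{\le n-j}(N,\CC)$. The target weight $r=r_1+r_2+2n$ satisfies $r>2n$ provided $r_1+r_2>0$, which holds since $\cF$ is a nonzero Hilbert cusp form, so Shimura's holomorphic projection $\Pi_{\mathrm{hol}}$ is defined on this space.

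Next, I invoke the characterising property of $\Pi_{\mathrm{hol}}$: for every $j\in\{1,\dots,n\}$ and every holomorphic $f\in M_{r-2j}(N)$, one has $\Pi_{\mathrm{hol}}(\delta^j f)=0$. Applied to Lanphier's formula, every term with $n-j\ge 1$ is killed, and the sole surviving term is the $j=n$ one, in which $\delta^{n-j}=\mathrm{id}$ and $\Pi_{\mathrm{hol}}$ acts as the identity on the holomorphic form $[\cF]_n$. The coefficient of this surviving term is
\[
\frac{(-1)^n\binom{n}{n}\binom{r_1+n-1}{0}}{\binom{r_1+r_2+2n-2}{n}\binom{r_1+r_2+2n-1}{0}}
=\frac{(-1)^n}{\binom{r_1+r_2+2n-2}{n}},
\]
so rearranging gives exactly the claimed formula.

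There is no real obstacle; the only small subtleties are (i) confirming that Lanphier's identity makes sense in the Hilbert setting (which is asserted in the preceding proposition), (ii) checking the weight hypothesis for $\Pi_{\mathrm{hol}}$, which is automatic for cusp forms of nonzero weight, and (iii) tracking that the binomial denominator $\binom{r_1+r_2+2n-2}{n}$ is nonzero (again automatic for $r_1+r_2\ge 2$, so no division-by-zero issue arises). Once these are observed the corollary is immediate.
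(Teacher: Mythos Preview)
Your proof is correct and follows essentially the same approach as the paper: verify that $\Pi_{\mathrm{hol}}$ is well-defined on the target (weight $r_1+r_2+2n$, degree $\le n$, with $r_1+r_2>0$), apply it term by term to Lanphier's identity so that every term with $j<n$ dies, and read off the surviving coefficient. Your explicit computation of that coefficient and your remarks on the minor subtleties are accurate and match the paper's reasoning.
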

   
   \begin{proof}
    We note firstly that Shimura's projection operator is well-defined, since $\iota^*(\delta_1^n \cF)$ has weight $r_1 + r_2 + 2n$ and degree $\le n$, and $r_1 + r_2 > 0$. Applying $\Pi_{\mathrm{hol}}$ to Lanphier's formula, all the terms go to 0 except the $j = n$ term, and this proves the first equality. The second follows immediately by replacing $\cF$ with $\cF^\sigma$.
   \end{proof}

   The theory of Rankin--Cohen brackets extends to overconvergent forms:
   
   \begin{proposition}
    If $\cF$ is an overconvergent Hilbert cusp form of weight $(r_1, r_2)$, then there are overconvergent elliptic cusp forms $[\cF]_n$ (the Rankin--Cohen brackets of $\cF$) of weight $r_1 + r_2 + 2n$, for all $n \ge 0$, given by the same formulae as in Proposition-Definition \ref{propdef}.
   \end{proposition}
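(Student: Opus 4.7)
The plan is to verify that each ingredient in the complex-analytic formula of Proposition-Definition \ref{propdef} has an overconvergent $p$-adic analogue, and that the finite linear combination actually lands in $S^\dagger_{r_1 + r_2 + 2n}(N, L)$ rather than merely in the larger nearly-overconvergent space.

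First, the Maass--Shimura operators $\delta_1, \delta_2$ have $p$-adic counterparts acting on nearly-overconvergent Hilbert modular forms (obtained by replacing $\omega^{(\mu)}$ with $\cH^{(\mu, n_1, n_2)}$ in the geometric definitions), and they preserve strong cuspidality: if $\cF \in S^\dagger_{(r_1, r_2)}(\fN, L)$, then $\delta_1^{a_1} \delta_2^{a_2} \cF$ lies in $\SS^{\dagger, \le (a_1, a_2)}_{(r_1 + 2a_1, r_2 + 2a_2)}(\fN, L)$. On Fourier--Whittaker coefficients these operators act as the partial theta operators $\theta_i^{a_i} = (q_i \partial_{q_i})^{a_i}$, matching the second formula of Proposition-Definition \ref{propdef}.

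Next, the diagonal pullback $\iota^*$ has a $p$-adic incarnation as pullback along the morphism of rigid spaces induced by the functor $E \mapsto E \otimes_{\ZZ} \cO_F$ on elliptic curves with suitable level structure (using the auxiliary $\Gamma(q)$-trick of the small-levels subsection when $N < 4$). Since $p = \fp_1 \fp_2$ splits in $F$, the HBAV $E \otimes \cO_F$ is ordinary at both $\fp_i$ if and only if $E$ is ordinary at $p$; hence the preimage of the Hilbert ordinary locus contains the elliptic ordinary locus, and $\iota^*$ carries strongly-cuspidal nearly-overconvergent Hilbert forms to strongly-cuspidal nearly-overconvergent elliptic forms, sending
\[ \SS^{\dagger, \le (a_1, a_2)}_{(r_1 + 2a_1, r_2 + 2a_2)}(\fN, L) \longrightarrow \SS^{\dagger, \le a_1 + a_2}_{r_1 + r_2 + 2(a_1 + a_2)}(N, L) \]
and acting on $q$-expansions by $(q_1, q_2) \mapsto (q, q)$.

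Combining these, the explicit finite sum in Proposition-Definition \ref{propdef} defines an element of $\SS^{\dagger, \le n}_{r_1 + r_2 + 2n}(N, L)$. To show that it lies in the pure-degree subspace $S^\dagger_{r_1 + r_2 + 2n}(N, L)$, I would compare $q$-expansions: the $q$-expansion of the sum is determined by that of $\cF$ through a universal combinatorial recipe, and on classical holomorphic $\cF$ this recipe yields the $q$-expansion of the classical $[\cF]_n$, which is genuinely holomorphic by Lanphier's identity. Hence the sum carries no nearly-holomorphic degree and defines the required overconvergent elliptic cusp form; cuspidality is preserved throughout by the above.

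The principal obstacle is the rigorous $p$-adic construction of $\iota^*$: producing a morphism of the relevant integral models, extending it compatibly to toroidal compactifications so that elliptic cusps are sent to Hilbert cusps, and identifying the pullback of the sheaves $\omega^{(\mu)}$ and $\cH^{(\mu, n_1, n_2)}$ with their elliptic counterparts up to the expected symmetric-power twist. Once this geometric setup is in place, the remainder of the argument is a routine transcription of the complex-analytic calculation onto $q$-expansions.
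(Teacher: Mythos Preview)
Your proposal is correct and follows essentially the same route as the paper's proof: both arguments set up the $p$-adic $\delta_i$ operators on nearly-overconvergent Hilbert forms, pull back along $\iota$ to obtain a nearly-overconvergent elliptic cusp form in $\SS^{\dagger,\le n}_{r_1+r_2+2n}(N,L)$, and then invoke the same formal cancellation of positive-degree terms that occurs in the classical case to conclude the result lies in $S^\dagger_{r_1+r_2+2n}(N,L)$. The paper phrases this last step as ``the same computation as in the classical case'', while you phrase it as a universal $q$-expansion recipe verified on classical forms; these amount to the same thing, since the vanishing of the higher-degree part is a polynomial identity in the Fourier--Whittaker coefficients. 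Your explicit discussion of why $\iota^{-1}$ of the Hilbert ordinary locus contains the elliptic ordinary locus (using that $p$ splits) is a detail the paper leaves implicit.
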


   \begin{proof}
    The case $n = 0$ is obvious from the definition of overconvergent Hilbert modular forms, as sections of the automorphic line bundles $\omega^{(r_1, r_2)}$ over a strict neighbourhood of the ordinary locus in the Hilbert modular variety. Pulling such a section back to the image of $Y_1(N)$ gives a section of $\iota^*\left( \omega^{(r_1, r_2)} \right) = \omega^{r_1 + r_2}$ over the ordinary locus of $Y_1(N)$, and (by considering Fourier--Whittaker coefficients) this section must vanish at the cusps of $Y_1(N)$, and therefore defines an overconvergent modular form.
    
    For $n \ge 1$, we use the theory of nearly-overconvergent $p$-adic modular forms. We may interpret $\theta_1^{a_1} \theta_2^{a_2} \cF$, for $\cF \in S^\dagger_{(r_1, r_2)}$ and any $a_1, a_2$ with $a_1 + a_2 = n$, as the degree 0 part of the nearly-overconvergent Hilbert modular form $\delta_1^{a_1} \delta_2^{a_2} \cF$, which has a polynomial Fourier--Whittaker expansion in which the Fourier--Whittaker coefficients are polynomials in two variables $X_1, X_2$ of degree $\le n$. Pulling this back via $\iota$ gives a nearly-overconvergent elliptic cusp form of weight $r_1 + r_2 + 2(a_1 + a_2)$; and exactly the same computation as in the classical case shows that in the linear combination defining $[\cF]_n$, all the positive-degree terms cancel to 0, and the result is an overconvergent form.
   \end{proof}
   
   We now relate Rankin--Cohen brackets to overconvergent projection operators. We let $r_1, r_2, n$ be integers, with $n \ge 1$ (the case $n = 0$ being trivial), and write $t = r_1 + r_2 + 2n$.
   
   \begin{proposition}
    \label{prop:ocprojectionbracket}
    If $\cF \in S^{\dag}_{(r_1, r_2)}(\fN, L)$, then in $\mathbf{S}^{\dag, \le n}_{t}$ we have the equality 
    \[ 
     [\cF]_n = (-1)^n \binom{t - 2}{n}\iota^*\left( \delta_1^n \cF\right) \bmod \delta\left( \mathbf{S}^{\dag, \le n-1}_{t-2}\right).
    \]
    In particular,
    \begin{enumerate}[(i)]
     \item if $r_1 + r_2 \ge 1$, then Urban's overconvergent projector is defined on $\iota^*\left( \delta_1^n \cF\right)$, and maps it to the overconvergent form $(-1)^n \binom{t - 2}{n}^{-1} [\cF]_n$; 
     \item if $r_1 + r_2 \ge 2-n$, then Darmon and Rotger's overconvergent projector is defined on $\iota^*\left( \delta_1^n \cF\right)$, and maps it to the image of $(-1)^n \binom{t - 2}{n}^{-1}[\cF]_n$ modulo $\theta^{t - 1}\left( S^{\dag}_{2-t}\right)$.
    \end{enumerate}
   \end{proposition}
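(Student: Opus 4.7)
The plan is to deduce the entire proposition from Lanphier's identity (the preceding proposition) by isolating its leading $j=n$ term. Applying that identity in $\mathbf{S}^{\dag, \le n}_t$ gives
\[ \iota^*(\delta_1^n \cF) = \sum_{j=0}^n c_{n,j}\, \delta^{n-j}([\cF]_j), \]
and a direct substitution shows that $c_{n,n} = (-1)^n/\binom{t-2}{n}$, while for each $j < n$ the summand $\delta^{n-j}([\cF]_j) = \delta(\delta^{n-j-1}[\cF]_j)$ visibly lies in $\delta(\mathbf{S}^{\dag, \le n-j-1}_{t-2}) \subseteq \delta(\mathbf{S}^{\dag, \le n-1}_{t-2})$. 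Reducing modulo this image and rescaling by $(-1)^n \binom{t-2}{n}$ then produces the stated congruence. One minor subtlety is that Lanphier's identity was originally formulated for classical holomorphic forms, but since it is really a polynomial identity on Fourier--Whittaker expansions, and since each $[\cF]_j$ is a well-defined overconvergent elliptic cusp form by the previous proposition, the formula carries over verbatim to the overconvergent setting.

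For parts (i) and (ii) the strategy is identical: apply the relevant projector to the Lanphier decomposition and use that all terms with $j < n$ are killed. Under the hypothesis $r_1+r_2 \ge 1$ we have $t \ge 2n+1$, hence $t \notin \{2,\dots,2n\}$, so Urban's projector is defined on $M^{\dag, \le n}_t$; since it vanishes on $\bigoplus_{j \ge 1}\delta^j(M^{\dag}_{t-2j})$ by construction, it sends $\iota^*(\delta_1^n \cF)$ to $c_{n,n}[\cF]_n$, proving (i). Under the weaker assumption $r_1+r_2 \ge 2-n$ we only obtain $n \le t-2$, but this is exactly what is needed for $\iota^*(\delta_1^n\cF) \in \mathbf{S}^{\dag, \le n}_t \subseteq \mathbf{S}^{\dag, \le t-2}_t$ to lie in the domain of Darmon--Rotger's projector; the analogous vanishing on images of $\delta$ yields (ii) modulo $\theta^{t-1}(S^{\dag}_{2-t})$.

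There is essentially no serious obstacle in this argument: the proposition is a repackaging of Lanphier's identity, and the only genuine bookkeeping lies in verifying the weight conditions under which each of the two projectors is well-defined, which is exactly what the hypotheses in (i) and (ii) guarantee.
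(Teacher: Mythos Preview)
Your argument via Lanphier's identity is correct when $r_1+r_2\ge 1$, and in that range it coincides with the paper's own approach. The gap is that the main congruence (and hence part (ii)) must also be proved for weights with $r_1+r_2\le 0$, and here Lanphier's identity is not available: the coefficients
\[
c_{n,j}=\frac{(-1)^j\binom{n}{j}\binom{r_1+n-1}{n-j}}{\binom{r_1+r_2+2j-2}{j}\binom{r_1+r_2+n+j-1}{n-j}}
\]
are rational functions of $r_1,r_2$, and the binomial coefficients in the denominator vanish for various $j$ once $r_1+r_2\le 0$. So your assertion that ``it is really a polynomial identity on Fourier--Whittaker expansions'' and therefore ``carries over verbatim'' is not correct in this range; the identity simply fails to make sense. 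The paper flags exactly this point, and in fact remarks afterwards that in the intermediate range $2-2n\le r_1+r_2\le 0$ it is not even known whether $\iota^*(\delta_1^n\cF)$ lies in $\sum_a\delta^a(S^\dag_{t-2a})$ at all.

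The paper's fix is to bypass Lanphier entirely for the congruence. One writes $[\cF]_n=\iota^*(P(\delta_1,\delta_2)\cF)$ with $P(X,Y)=\sum_a(-1)^a\binom{r_1+n-1}{n-a}\binom{r_2+n-1}{a}X^aY^{n-a}$, and observes the elementary identity $P(X,-X)=(-1)^n\binom{t-2}{n}X^n$. Hence $P(X,Y)-(-1)^n\binom{t-2}{n}X^n=(X+Y)Q(X,Y)$ for some $Q\in\ZZ[X,Y]$ homogeneous of degree $n-1$, and since $\iota^*\circ(\delta_1+\delta_2)=\delta\circ\iota^*$ one gets
\[
[\cF]_n-(-1)^n\binom{t-2}{n}\,\iota^*(\delta_1^n\cF)=\delta\bigl(\iota^*Q(\delta_1,\delta_2)\cF\bigr)\in\delta\bigl(\mathbf{S}^{\dag,\le n-1}_{t-2}\bigr),
\]
valid for all $r_1,r_2$. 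Parts (i) and (ii) then follow from this congruence exactly as you describe. So the missing idea is this polynomial-in-$(\delta_1,\delta_2)$ argument; your proof as written establishes only the case $r_1+r_2\ge 1$.
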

  
   \begin{proof}
    If $r_1 + r_2 \ge 1$ then Lanphier's identity is valid for $\cF \in S^{\dag}_{(r_1, r_2)}(\fN, L)$, so we may argue as in the case of classical forms. However, when $r_1 + r_2 \le 0$, we cannot argue in this fashion, since some of the binomial coefficients in the denominator are 0. Hence we use a slightly different argument. 
    
    We can write $[\cF]_n$ as $\iota^*\left( P(\delta_1, \delta_2) \cF\right)$, where $P$ is the polynomial $\sum_{a=0}^n (-1)^a \binom{r_1 + n-1}{n-a} \binom{r_2 + n-1}{a} X^a Y^{n-a}$. Since $P(X, -X) = (-1)^n \binom{t-2}{n}X^n $, we can write 
    \[ P(X, Y) = (-1)^n \binom{t-2}{n} X^n + (X + Y) Q(X, Y) \]
    for some homogenous polynomial $Q \in \ZZ[X, Y]$ of degree $n-1$. Since $\iota^*( (\delta_1 + \delta_2)\mathcal{G}) = \delta(\iota^* \cG)$ for any nearly-overconvergent $\cG$, we have that 
    \[ [\cF]_n - (-1)^n \binom{t - 2}{n}\iota^*\left( \delta_1^n \cF\right) = \delta\left(\iota^* Q(\delta_1, \delta_2)\cF\right) \in \delta\left( \mathbf{S}^{\dag, \le n-1}_{t-2}\right).\qedhere\]
   \end{proof}
  
   \begin{remark}
    Lanphier's formula is also valid if $r_1 + r_2 < 2-2n$. In the intermediate cases $2-2n \le r_1+r_2\le 0$, we do not know whether $\iota^*\left( \delta_1^n \cF\right)$ lies in $\sum_{a = 0}^n \delta^a\left( S^{\dag}_{t-2a}\right)$.
   \end{remark}
%
%
%
%
%
   
  \subsection{P-depletion}
  
  \begin{definition}
   If $\cF \in S_\mu^\dagger$, and $\mathfrak{a}$ is a square-free product of primes dividing $p$, we define the $\mathfrak{a}$-depletion of $\cF$ by
   \[ 
   \cF^{[\mathfrak{a}]} = \Big(1 - \cV(\mathfrak{a}) \cU(\mathfrak{a})\Big) \cF, 
   \]
   so that $c(\fm, \cF^{[\mathfrak{a}]}) = 0$ if $\mathfrak{a} \mid \fm$, and $c(\fm, \cF^{[\mathfrak{a}]}) = c(\fm, \cF)$ otherwise.
  \end{definition}
  
  We advance here a conjecture relating these depletion operators to the differential operators $\Theta_i$ in the case of $p$ a split prime.
  
  \begin{conjecture}
   \label{conj:thetaimage}
   Suppose $r_1, r_2 \ge 2$, and assume $\cF$ is a classical eigenform of level $\fN$. Then $\cF^{[\fp_1]}$ is in the image of the map $\Theta_1: S^\dag_{w_1(\mu)} \into S^{\dag}_{\mu}$.
  \end{conjecture}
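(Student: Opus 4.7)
The plan is to adapt Coleman's classical inverse-theta argument for $p$-depleted elliptic forms to the Hilbert setting with split prime $p = \fp_1\fp_2$. Since $\Theta_1 = \delta_1^{r_1 - 1}$ acts on Fourier--Whittaker expansions by multiplication by $\sigma_1(\lambda)^{r_1 - 1}$, the natural candidate preimage of $\cF^{[\fp_1]}$ is the formal series $\cG$ with
\[ c(\lambda, \cG) = \sigma_1(\lambda)^{-(r_1 - 1)}\, c(\lambda, \cF^{[\fp_1]}) \]
for $\lambda$ satisfying $v_{\fp_1}(\lambda) = 0$, and $c(\lambda, \cG) = 0$ otherwise. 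The $\fp_1$-depletion of $\cF$ annihilates exactly those $\lambda$ with $\fp_1 \mid \lambda$, so the formula is well-posed; moreover, since $\sigma_1(\lambda)$ is a $\fp_1$-adic unit throughout the support of $\cG$, its Fourier--Whittaker coefficients are $p$-adically bounded, and tautologically $\Theta_1(\cG) = \cF^{[\fp_1]}$ as formal series.

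The substantive task is to upgrade $\cG$ from a bounded formal $p$-adic expansion to a genuine element of $S^\dag_{w_1(\mu)}(\fN, L)$, i.e.\ to produce a section of $\omega^{w_1(\mu)}$ over a strict neighbourhood of the full ordinary locus of $\bar Y$. I would proceed in two stages. First, I would deduce $\fp_2$-overconvergence of $\cG$ directly from the overconvergence of $\cF^{[\fp_1]}$: because $\Theta_1$ is a purely $\fp_1$-directional differential operator, the formal inversion (which only involves $\sigma_1(\lambda)^{-(r_1-1)}$, not $\sigma_2(\lambda)$) commutes with the $\fp_2$-adic growth estimates defining overconvergence in the $\fp_2$-direction. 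Second, for $\fp_1$-overconvergence the $\fp_1$-depletion plays precisely the role that classical $p$-depletion plays in Coleman's elliptic argument: division by $\sigma_1(\lambda)^{r_1-1}$ when $\sigma_1(\lambda)$ is a $\fp_1$-adic unit is the bounded operation that allows one to invert $\Theta_1$ while preserving overconvergence.

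\textbf{Main obstacle.} The second stage is the genuine difficulty. In the elliptic case, Coleman controls $\theta^{-(k-1)}$ on $p$-depleted subspaces via a compactness argument for $U_p$ together with an explicit operator-norm bound. The Hilbert analogue demands a ``partial overconvergence'' theory in one of the two split directions: one must extend $\cG$ across the $\fp_1$-non-ordinary stratum of $\bar Z$ while already working on a strict $\fp_2$-ordinary neighbourhood, and do so compatibly with the sheaf-theoretic description of $S^\dag_{w_1(\mu)}$. A promising route is via the Goren--Oort stratification and the Tian--Xiao geometric setup underlying Proposition~\ref{prop:tianxiao}: one would show that $\Theta_1$ extends to a quasi-isomorphism of BGG-type subcomplexes on a strict neighbourhood shrinking toward the $\fp_1$-non-ordinary locus, so that sheaf-theoretic inversion on $\fp_1$-depleted sections delivers the required overconvergent preimage. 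Extracting uniform quantitative growth bounds from such an inversion, rather than merely a formal or $p$-adically bounded one, appears to be the real technical hurdle, and is presumably why the statement is advanced here as a conjecture rather than proven.
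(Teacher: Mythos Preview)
The statement you are attempting to prove is a \emph{conjecture} in the paper; the authors do not prove it in general. They write explicitly that ``this conjecture appears to be surprisingly difficult'', and they make precisely the observation you do: the formal series $\cG$ with $c(\lambda,\cG)=\sigma_1(\lambda)^{-(r_1-1)}c(\lambda,\cF^{[\fp_1]})$ is automatically a $p$-adic Hilbert modular form (as a uniform limit of $\theta_1^{p^n(p-1)-r_1+1}(\cF^{[\fp_1]})$), so the entire content is overconvergence. Your proposal correctly locates the gap but does not close it; what you describe as the ``main obstacle'' is exactly the open problem.

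What the paper \emph{does} prove is the special case where $\cF$ is non-ordinary at $\fp_2$ (Proposition~\ref{prop:sstheta}), and the method is entirely different from your Coleman-style analytic approach. The argument is cohomological: since $\cU(\fp_1)$ is invertible on $H^2_{\rig,c\text{-}\partial}(\bar Y^{\ord},\cH^{(\mu-2)})$ and annihilates $\cF^{[\fp_1]}$, the class of $\cF^{[\fp_1]}$ vanishes there, hence $\cF^{[\fp_1]}\in\operatorname{image}\Theta_1+\operatorname{image}\Theta_2$. One then uses the slope bound that $\cU(\fp_2)$ has all slopes $\ge t_2+r_2-1$ on $\operatorname{image}\Theta_2/(\operatorname{image}\Theta_1\cap\operatorname{image}\Theta_2)$, together with the non-ordinarity hypothesis (which forces the $\cU(\fp_2)$-slopes of $\cF^{[\fp_1]}$ to lie strictly between $t_2$ and $t_2+r_2-1$), to conclude that the $\Theta_2$-component is zero. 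This bypasses any direct analysis of overconvergence radii.

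Two remarks on your sketch. First, your claim that $\fp_2$-overconvergence of $\cG$ is ``direct'' because $\Theta_1$ is $\fp_1$-directional is plausible heuristically but not obviously rigorous: overconvergence is a sheaf-theoretic condition on strict neighbourhoods of the full ordinary locus, and decoupling the two directions requires more than an observation about which variable $\Theta_1$ differentiates. Second, the paper also records a workaround due to Fornea (Proposition~\ref{prop:fornea}): the \emph{doubly} depleted form $\cF^{[\fp_1,\fp_2]}$ is always in the image of $\Theta_1$, without any ordinarity hypothesis, and this is what is actually used in the proof of the main regulator formula.
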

  
  Although simple to state, this conjecture appears to be surprisingly difficult. Notice that it is automatic that $\Theta_1^{-1}(\cF^{[\fp_1]})$ exists as a $p$-adic Hilbert modular form, since we can write it as a uniform limit of the forms $\theta_1^{p^n(p-1)-r_1 + 1}(\cF^{[\fp_1]})$ as $n \to \infty$; the difficulty is ensuring that it is overconvergent. We can only prove the conjecture under an irritating additional assumption:
  
  \begin{proposition}
   \label{prop:sstheta}
   Assume that $\cF$ is non-ordinary at $\fp_2$. Then Conjecture \ref{conj:thetaimage} holds. 
  \end{proposition}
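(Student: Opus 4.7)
The plan is to construct $\cG_0 \coloneqq \Theta_1^{-1}\bigl(\cF^{[\fp_1]}\bigr)$ as a $p$-adic Hilbert modular form of weight $w_1(\mu)$---which exists because $\Theta_1$ preserves Fourier--Whittaker coefficients and $\cF^{[\fp_1]}$ has vanishing coefficients at ideals divisible by $\fp_1$---and then to show that $\cG_0$ is in fact overconvergent. My argument combines Tian--Xiao's BGG presentation of rigid cohomology (Proposition~\ref{prop:tianxiao}) with a slope decomposition for the compact operator $\cU(\fp_2)$.

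My first step is a slope-theoretic reduction. Because $\Theta_1$ commutes with $\cU(\fp_2)$ while the image of $\Theta_2$ has all $U(\fp_2)$-slopes $\ge r_2-1$, applying the slope-$<(r_2-1)$ projector $e^{(\fp_2)}$ to Tian--Xiao's isomorphism yields
\[
 \frac{\bigl(S^\dag_\mu\bigr)^{<(r_2-1)}}{\Theta_1\bigl((S^\dag_{w_1(\mu)})^{<(r_2-1)}\bigr)} \;\cong\; \bigl(H^2_{\rig, c-\partial}(\bar Y^{\ord}, \cH^{(\mu-2)}) \otimes L\bigr)^{<(r_2-1)}.
\]
Since $\cF$ is classical and non-ordinary at $\fp_2$, the Newton polygon of its $\fp_2$-Hecke polynomial forces the two $\fp_2$-stabilisation roots to have $U(\fp_2)$-valuation strictly between $0$ and $r_2-1$; consequently $\cF^{[\fp_1]}$, which lies in the $\cU(\fp_2)$-stable span $\langle\cF, \cV(\fp_2)\cF\rangle$, has all $U(\fp_2)$-slopes strictly less than $r_2-1$, and so $e^{(\fp_2)}(\cF^{[\fp_1]}) = \cF^{[\fp_1]}$. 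Under the displayed isomorphism, showing $\cF^{[\fp_1]} \in \Theta_1 S^\dag_{w_1(\mu)}$ therefore reduces to showing that its class in $H^2_{\rig, c-\partial}$ vanishes, i.e.\ that $\cF^{[\fp_1]} = \Theta_1 \cG_1 + \Theta_2 \cG_2$ for some overconvergent $\cG_1, \cG_2$.

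The main obstacle is this cohomological vanishing. My strategy is to establish the stronger statement that $\cG_0$ itself is overconvergent (which immediately yields such a decomposition with $\cG_2 = 0$), by combining two partial overconvergence facts in complementary supersingular directions. First, because $\cF^{[\fp_1]}$ is $\fp_1$-depleted and $\cU(\fp_1)$ commutes with $\Theta_1$, we have $\cU(\fp_1)\cG_0 = 0$; the Hilbert analogue of Coleman's primitive-of-a-depleted-form theorem then shows that $\cG_0$ extends across the $\fp_1$-supersingular stratum of the Hilbert modular surface. Second, because $\Theta_1$ commutes with $\cU(\fp_2)$, the form $\cG_0$ has the same $\cU(\fp_2)$-slopes as $\cF^{[\fp_1]}$, all strictly bounded above by $r_2-1$ thanks to the non-ordinarity hypothesis; via a partial classicality theorem for $p$-adic Hilbert modular forms of finite $\cU(\fp_2)$-slope below the critical value (of the kind developed by Tian--Xiao or Kassaei), this forces $\cG_0$ to also extend across the $\fp_2$-supersingular stratum. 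Gluing the two partial extensions along the ordinary locus yields the required overconvergent form $\cG_0 \in S^\dag_{w_1(\mu)}$, completing the proof.
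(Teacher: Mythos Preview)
Your Step~1 is essentially the paper's argument, and it is almost complete --- but you stop one line short and then launch into an unnecessary and problematic Step~2. Once you have reduced to showing that the class of $\cF^{[\fp_1]}$ in $H^2_{\rig, c-\partial}(\bar Y^{\ord}, \cH^{(\mu-2)})$ vanishes, you are done: the operator $\cU(\fp_1)$ acts \emph{invertibly} on this rigid $H^2$ (Tian--Xiao), and $\cF^{[\fp_1]}$ is annihilated by $\cU(\fp_1)$ by construction. That is the whole proof in the paper, and your slope analysis at $\fp_2$ already contains the remaining ingredient (that once the class in $H^2$ vanishes, the non-ordinarity at $\fp_2$ forces the $\Theta_2$-component of any preimage to be absorbed into $\image\Theta_1$). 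You simply failed to notice that the cohomological vanishing is a one-line consequence of $\fp_1$-depletion.

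Your Step~2, by contrast, is not a valid substitute. First, it is logically redundant: ``$\cG_0$ is overconvergent'' is literally the statement of the Conjecture, so invoking it as a means to prove the cohomological vanishing renders Step~1 pointless. Second, the ingredients you appeal to are not available in the paper and are genuinely delicate. The ``Hilbert analogue of Coleman's primitive-of-a-depleted-form theorem'' is exactly the kind of statement the authors warn does not obviously generalise to two-dimensional base (see the remark in \S6.1); and the final ``gluing'' of two partial analytic continuations across the $\fp_1$- and $\fp_2$-supersingular strata is not justified --- these strata meet, and having separate extensions over each does not automatically produce a single overconvergent section over a strict neighbourhood of the full ordinary locus. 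So Step~2 has a real gap, whereas the missing line in Step~1 is trivial.
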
 
  
  \begin{proof}
   The operators $\cU(\fp_1)$ and $\cU(\fp_2)$ are both invertible on $H^2_{\rig}$. Hence, since the form $\cF^{[\fp_1]}$ is in the kernel of $\cU(\fp_1)$, it must lie in the sum $\operatorname{image} \Theta_1 + \operatorname{image} \Theta_2$.
   
   We consider the projection of $\cF^{[\fp_1]}$ to the quotient
   \[
   \frac{\operatorname{image} \left(\Theta_2\right)}
   {\operatorname{image}\left(\Theta_1\right) \cap \operatorname{image} \left(\Theta_2\right)}.
   \]
   Since all the maps are Hecke-equivariant, this quotient has an action of $\cU(\fp_2)$, and all slopes of $\cU(\fp_2)$ are $\ge t_2 + k_2 - 1$. However, by assumption, $\cF$ is non-ordinary at $\fp_2$; thus $\cF^{[\fp_1]}$ lies in a sum of finite-dimensional generalised eigenspaces for $\cU(\fp_2)$ whose slopes $\sigma$ satisfy $t_2 < \sigma < t_2 + k_2 - 1$, and hence its image in this quotient is zero.
  \end{proof}
  
  We shall not in fact use this result directly; instead, we shall use the following proposition due to Fornea. However, we leave Proposition \ref{prop:sstheta} in situ, since Fornea's argument is partially based on the proof of Proposition \ref{prop:sstheta} from an earlier preprint version of this paper. 
  
  \begin{proposition}[{Fornea; \cite[Corollary 4.7]{fornea17}}]
   \label{prop:fornea}
   For $\cF$ as above, the form $\cF^{[\fp_1, \fp_2]} = (\cF^{[\fp_1]})^{[\fp_2]}$ is in the image of $\Theta_1$ (and also of $\Theta_2$). 
  \end{proposition}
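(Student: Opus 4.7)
The plan is to adapt the argument for Proposition \ref{prop:sstheta}, replacing the non-ordinariness hypothesis at $\fp_2$ by the stronger condition that $\cF^{[\fp_1,\fp_2]}$ is annihilated by $\cU(\fp_2)$. Since $\cF^{[\fp_1,\fp_2]}$ is in particular $\cU(\fp_1)$-depleted, the invertibility of $\cU(\fp_1)$ on $H^2_{\rig, c-\partial}$ (the fact invoked in the proof of Proposition \ref{prop:sstheta}) combined with the Tian--Xiao presentation of Proposition \ref{prop:tianxiao} forces an expression
\[ \cF^{[\fp_1, \fp_2]} = \Theta_1(G_1) + \Theta_2(G_2), \]
for some $G_i \in S^\dag_{w_i(\mu)}$. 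It then remains to show that the $\Theta_2$-term can be absorbed into the $\Theta_1$-term; the symmetric statement for $\Theta_2$ will follow by swapping the roles of the two primes.

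The key step is a $\cU(\fp_2)$-slope argument. By the slope-raising property of $\Theta_2$ recalled above, every $\cU(\fp_2)$-slope occurring on $\Theta_2\bigl(S^\dag_{w_2(\mu)}\bigr)$ is $\ge r_2 - 1 \ge 1$. On the other hand, the identity $\cU(\fp_2)\bigl(1 - \cV(\fp_2)\cU(\fp_2)\bigr) = 0$ gives $\cF^{[\fp_1,\fp_2]} \in \ker \cU(\fp_2)$, so in particular it lies in the slope-$0$ part of the $\cU(\fp_2)$-slope decomposition of $S^\dag_\mu$. Since $\Theta_1$ commutes with the Hecke operators at $\fp_2$, the slope decomposition restricts to $\operatorname{image}(\Theta_1)$, and one may replace $G_1$ by its slope-$<r_2-1$ component. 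Projecting the displayed equation onto this slope range kills the $\Theta_2$-summand outright and yields $\cF^{[\fp_1,\fp_2]} \in \Theta_1\bigl(S^\dag_{w_1(\mu)}\bigr)$.

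The main obstacle is technical rather than conceptual: one needs the existence of a $\cU(\fp_2)$-slope decomposition of $S^\dag_\mu$ (not merely of the compact operator $\cU(p) = \cU(\fp_1)\cU(\fp_2)$), compatible with the action of $\Theta_1$. Both ingredients are present in the Tian--Xiao framework, so this amounts to citation rather than new work. Once granted, the argument runs uniformly in the ordinary and non-ordinary cases, precisely because double depletion pins $\cF^{[\fp_1,\fp_2]}$ to the slope-$0$ part of $\cU(\fp_2)$ regardless of the $\fp_2$-behaviour of $\cF$ itself—so the restriction to non-ordinary eigenforms imposed in Proposition \ref{prop:sstheta} disappears.
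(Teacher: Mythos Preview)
Your argument contains a genuine error in the slope analysis. You claim that $\cF^{[\fp_1,\fp_2]} \in \ker \cU(\fp_2)$ implies it ``lies in the slope-$0$ part of the $\cU(\fp_2)$-slope decomposition'', but this is backwards: elements of $\ker \cU(\fp_2)$ are generalised eigenvectors for the eigenvalue $0$, which has $p$-adic valuation $+\infty$, not $0$. Thus $\cF^{[\fp_1,\fp_2]}$ lies in the \emph{infinite}-slope part, and your projection onto slopes $< r_2 - 1$ annihilates $\cF^{[\fp_1,\fp_2]}$ just as surely as it annihilates $\Theta_2(G_2)$. The resulting identity $0 = \Theta_1(\text{something})$ carries no information. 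This is not a technical slip that can be patched: the whole point of Proposition~\ref{prop:sstheta} is that the argument works when $\cF^{[\fp_1]}$ sits in a $\cU(\fp_2)$-generalised eigenspace of \emph{finite} slope strictly below $t_2 + r_2 - 1$; depleting at $\fp_2$ destroys this finite-slope property rather than strengthening it.

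The approach actually taken by Fornea (as indicated in the paper's parenthetical remark after the proposition) is to pass to a $\fp_2$-stabilisation $\cF_{\alpha_2} = (1 - \beta_2 V(\fp_2))\cF$ with $U(\fp_2)$-eigenvalue $\alpha_2$ of non-critical slope $< k_2 + 1$; since $v_p(\alpha_2) + v_p(\beta_2) = k_2 + 1$, at least one such root always exists. The form $\cF_{\alpha_2}^{[\fp_1]}$ is then a genuine $\cU(\fp_2)$-eigenvector of slope $< t_2 + r_2 - 1$, so the argument of Proposition~\ref{prop:sstheta} applies verbatim to give $\cF_{\alpha_2}^{[\fp_1]} \in \image(\Theta_1)$. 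One then recovers the doubly depleted form via $\cF^{[\fp_1,\fp_2]} = (1 - \alpha_2 V(\fp_2))\,\cF_{\alpha_2}^{[\fp_1]}$, and since $V(\fp_2)$ commutes with $\Theta_1$ and preserves $S^\dagger_{w_1(\mu)}$, the conclusion follows.
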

  
  (Fornea's argument is formulated for forms that are ordinary at $\fp_2$, but it in fact suffices to have at least one $\fp_2$-stabilisation of non-critical slope, as is clear from the proof.)

 \section{Evaluation of the regulator}
  
  We now begin the computation of the regulators of the Asai--Flach classes attached to Hilbert modular eigenforms. We assume $p = \fp_1 \fp_2$ is split in $F$, and $\sigma_1$ is the embedding of $F$ into our coefficient field $L$ corresponding to $\fp_1$, as above. 
  
  We fix, throughout his section, a level $\fN$ coprime to $p$, a weight $\mu$ of the form $(k_1 + 2, k_2 + 2, t_1, t_2)$ with $k_1, k_2 \ge 0$, and a Hilbert modular eigenform $\cF$ of level $U_1(\fN)$ and weight $\mu$ with coefficients in $L$. We assume (for simplicity) that $\fN$ is sufficiently large; the case where $\fN$ is not sufficiently large can be handled by introducing full level $q$ structure for an auxiliary prime $q$ and then passing to invariants, but we shall not spell out the details explicitly.
  
  \subsection{Cohomology classes from Hilbert eigenforms}
  
   Let $Y = Y_1(\fN)$ be the Hilbert modular surface considered as a $\Qp$-variety, and $\cY$ its smooth $\Zp$-model. Then the $\cF$-eigenspace for the Hecke operators acting on $H^2_{\dR}\left(Y, \cH^{(\mu-2)}(t_1 + t_2)\right) \otimes_{\Qp} L$ is 4-dimensional, and lifts isomorphically to the compactly-supported cohomology of $Y$. We denote this space by $M_{\dR}(\cF)$. By comparison with the rigid cohomology of the special fibre $\bar Y$ of $\cY$, the space $M_{\dR}(\cF)$ has an $L$-linear action of the Frobenius map. Moreover, for any $i \ne 2$ the $\cF$-eigenspaces in $H^i_{\dR}$ and $H^i_{\dR, c}$ vanish (and likewise in rigid cohomology).
   
   \begin{remark}
    The overconvergent filtered $F$-isocrystal $\cH^{(\mu-2)}(t_1 + t_2)$ is independent of the choice of the $t_i$, up to a canonical isomorphism, and we denote it by $\cH^{(k_1, k_2)}$. These isomorphisms twist the Hecke operators $\cT(\fq)$ by a power of $\operatorname{Norm}(\fq)$; so we obtain an identification beween the spaces $M_{\dR}(\cF)$ and $M_{\dR}(\cF[R])$ for any $R \in \ZZ$, where $\cF[R]$ is the form of weight $(k_1 + 2, k_2 + 2, t_1+R, t_2+R)$ obtained by twisting $\cF$ by the $R$-th power of the finite ad\`ele norm. We can thus regard $M_{\dR}(\cF)$ as a subspace of $H^2_{\dR}(Y, \cH^{(k_1, k_2)}) \otimes L$ canonically associated to the twisting-equivalence class of forms $\{ \cF[R] : R \in \ZZ\}$.
   \end{remark}
   
   If $\alpha_i, \beta_i$ are the roots of the polynomial $X^2 - p^{-t_i} c(\fp_i, \cF) X + p^{k_i + 1} \varepsilon_\cF(\fp_i)$, for $i = 1,2$, then the eigenvalues of $\varphi$ on $M_{\dR}(\cF)$ are the pairwise products $\{ \alpha_1 \alpha_2, \alpha_1 \beta_2, \beta_1\alpha_2, \beta_1 \beta_2\}$.
   (Note that $\alpha_i$ and $\beta_i$ are the eigenvalues of the operator $U(\fp_i)$ on the $\cF$-eigenspace at level $p \fN$.) Thus the polynomial
   \[ P_p(\cF, X) = \det(1 - X \varphi : M_{\dR}(\cF)) = (1 - \alpha_1 \alpha_2 X)\dots(1 -\beta_1 \beta_2 X) \]
   is the local Asai Euler factor of $\cF$ at $p$.
   
   \begin{notation}
    Let $P \in 1 + X L[X]$ be a monic polynomial. We write $H^*_{\FP}(\cY, \cH^{(k_1, k_2)}, P)$ for Besser's finite-polynomial cohomology of $\cY$, with coefficients in $\cH^{(k_1, k_2)}$, relative to the polynomial $P$; and $H^*_{\FP, c}(\cY, \cH^{(k_1, k_2)}, P)$ for its compactly supported analogue.
   \end{notation}

   By construction, we have a long exact sequence of $L$-vector spaces
   \[ \dots \rTo  H^i_{\FP}(\cY, \cH^{(k_1, k_2)}, P) \rTo \Fil^0 H^i_{\dR}(Y,\cH^{(k_1, k_2)})_L \rTo^{P(\varphi)} H^i_{\rig}(\bar Y, \cH^{(k_1, k_2)})_L \rTo\dots,\]
   and similarly for compactly supported cohomology. This sequence is compatible with the action of the Hecke operators away from $p$, so from the vanishing statements above, we see that the natural map
   \[ 
    H^2_{\FP, c}\left(\mathcal{Y}, \cH^{(k_1, k_2)}(n), P\right)[\cF] \to \left(\Fil^{n} M_{\dR}(\cF) \middle) \cap \middle( M_{\rig}(\cF)^{P(p^{-n}\varphi) = 0} \right)
   \]
   is an isomorphism for all $n \in \ZZ$. If $\eta$ is a class in $\Fil^n M_{\dR}(\cF)$, and $P$ is any polynomial such that $P(p^{-n}\varphi)$ annihilates $\eta$, then we write $\tilde\eta$ for the preimage of $\eta$ in $H^2_{\FP, c}$.
   
   \begin{notation}
    We let $j$ be an integer with $0 \le j \le \min(k_1, k_2)$, and we write $m = k_1 + k_2 - 2j \ge 0$.
   \end{notation}
   
   We write $\cY_{\QQ}$ for the smooth $\Zp$-model of $Y_1(N)$, where $N = \fN \cap \ZZ$ as usual. There is a morphism of filtered F-isocrystals on $\cY_{\QQ}$, the \emph{Clebsch--Gordan map},
   \[ CG^{(k_1, k_2, j)}: \iota^* \cH^{(k_1, k_2)}(j) \to \cH^{(m)}.\]
      
   \begin{proposition}
    Suppose $P(p^{-1}) \ne 0$. Then
    \[ \left\langle \log_p\left(\mathrm{AF}^{[\cF, j]}_{\et}\right), \eta\right\rangle_{\dR, Y} = \left\langle \Eis^{m}_{\syn, N}, CG^{(k_1, k_2, j)}\left( \iota^* \tilde\eta\right) \right\rangle_{\FP, P, \mathcal{Y}_\QQ}, \]
    where $\mathrm{AF}^{[\cF, j]}_{\et} \in H^1(\QQ, M_{\et}(\cF)^*(-j))$ is the Asai--Flach class defined in \cite{LLZ16}, and we have written $\log_p$ for the map
    \[ H^1_\mathrm{e}(\Qp, M_{\et}(\cF)^*(-j)) \to M_{\dR}(\cF)_{\Qp}^* / \Fil^{-j}\]
    induced by the Bloch--Kato logarithm and the de Rham comparison isomorphism $M_{\dR}(\cF) \cong \mathbf{D}_{\dR}(M_{\et}(\cF))$.
   \end{proposition}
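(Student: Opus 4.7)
The plan is to follow the template established in \cite{KLZ1a} for syntomic regulators of Beilinson--Flach classes: translate the left-hand side from étale to finite-polynomial cohomology, use the explicit construction of the Asai--Flach class as a pushforward of an Eisenstein class, and finish with the projection formula.

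First, I would use the compatibility between the Bloch--Kato logarithm and Besser's finite-polynomial syntomic regulator. Under the hypothesis $P(p^{-1}) \ne 0$, the natural map
\[
H^2_{\FP, c}\bigl(\cY, \cH^{(k_1, k_2)}(n), P\bigr)[\cF] \to \Fil^{n} M_{\dR}(\cF) \cap M_{\rig}(\cF)^{P(p^{-n}\varphi) = 0}
\]
recorded just above the proposition is an isomorphism, so $\eta$ has a canonical FP lift $\tilde\eta$. Pairing $\log_p$ of an étale class against $\eta$ then agrees with the FP pairing of a suitably defined \emph{syntomic} (equivalently, finite-polynomial) Asai--Flach class $\mathrm{AF}^{[\cF,j]}_{\FP}$ on $\cY$ against $\tilde\eta$; this is the Hilbert analogue of the étale-to-syntomic comparison used in \cite[\S 5]{KLZ1a}, and this is where the hypothesis $P(p^{-1}) \ne 0$ is consumed.

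Next, recall from \cite{LLZ16} that the étale Asai--Flach class is built from $\Eis^{m}_{\et, N}$ by pushing forward along the closed immersion $\iota: \cY_{\QQ} \hookrightarrow \cY$ and contracting the coefficient sheaf via $CG^{(k_1, k_2, j)}$. Since Besser's Gysin and pushforward maps in FP cohomology are compatible with their étale counterparts under $\log_p$, the syntomic Asai--Flach class admits the same presentation,
\[
\mathrm{AF}^{[\cF, j]}_{\FP} = \iota_*\bigl( CG^{(k_1,k_2,j)}_* \Eis^{m}_{\FP, N}\bigr),
\]
up to projection to the $\cF$-isotypic component, which is harmless inside the pairing. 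Applying the projection formula in finite-polynomial cohomology (adjunction between $\iota_*$ and $\iota^*$, together with adjunction between $CG_*$ and $CG^*$ on coefficients) then rewrites the FP pairing on $\cY$ as
\[
\bigl\langle \Eis^{m}_{\FP, N},\, CG^{(k_1,k_2,j)}(\iota^* \tilde\eta)\bigr\rangle_{\FP, P, \cY_{\QQ}},
\]
which is the right-hand side of the proposition.

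The main obstacle is verifying the three compatibilities used above (the $\log_p$-compatibility of $\iota_*$ in FP cohomology, the pushforward presentation of $\mathrm{AF}^{[\cF,j]}_{\FP}$, and the FP projection formula for $\iota$) in the specific geometry of the diagonal embedding of a modular curve into a Hilbert modular surface. None of these are conceptually new; they are the Hilbert analogues of the arguments in \cite[\S 5--7]{KLZ1a}, and the regularity of $\iota$ together with the fact that $CG^{(k_1,k_2,j)}$ is a morphism of filtered F-isocrystals means that Besser's Gysin formalism applies essentially verbatim. The one genuinely delicate point is keeping track of the twist by $j$ and the shift in Frobenius eigenvalues between $\cH^{(k_1,k_2)}$ on $\cY$ and $\cH^{(m)}$ on $\cY_{\QQ}$, which is exactly what the Clebsch--Gordan map accomplishes.
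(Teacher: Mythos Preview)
Your proposal is correct and follows essentially the same route as the paper: recall the definition of the \'etale Asai--Flach class as $(\iota_* \circ CG^{[k_1,k_2,j]})(\Eis^m_{\et,N})$, pass to syntomic cohomology via the Bloch--Kato comparison of \cite[Proposition~5.4.1]{KLZ1a}, and then apply the adjunction formula for the FP cup-product. One small imprecision: the hypothesis $P(p^{-1}) \ne 0$ is not used for the \'etale--syntomic comparison or for the lift $\tilde\eta$ (that isomorphism holds because the $\cF$-eigenspace vanishes in degrees $\ne 2$), but rather to ensure that the FP trace map $H^5_{\FP,c}(\cY,\Qp(3),P) \to L$ exists, so that the right-hand pairing is defined.
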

   
   \begin{proof}
    Let \( CG^{[k_1, k_2, j]}: \cH^{[m]} \to \iota^* \cH^{[k_1, k_2]}(-j) \) be the dual of the map $CG^{(k_1, k_2, j)}$. This map also makes sense in \'etale cohomology, and the \'etale Asai--Flach class was defined in \cite{LLZ16} as the image of the class
    \[ \mathrm{AF}^{[k_1, k_2, j]}_{\et, \fN}  \coloneqq (\iota_* \circ CG^{[k_1, k_2, j]})(\Eis^m_{\et, N})\]
    under projection to the $\cF$-eigenspace.
    
    The compatibility of syntomic and \'etale cohomology via the Bloch--Kato exponential map (cf.~\cite[Proposition 5.4.1]{KLZ1a}) gives the equality
    \[  \log_p\left(\mathrm{AF}^{[\cF, j]}_{\et}\right) = AJ_{\cF, \syn}\left( \mathrm{AF}^{[k_1, k_2, j]}_{\syn}\right),\]
    where $\mathrm{AF}^{[k_1, k_2, j]}_{\syn} = (\iota_* \circ CG^{[k_1, k_2, j]})(\Eis^m_{\syn, N})$ and $AJ_{\cF, \syn}$ is the projection map
    \begin{multline*} 
     H^3_{\syn}(\cY, \cH^{(k_1, k_2)}(2-j)) \to H^1_{\syn}(\operatorname{Spec} \ZZ_p, M_{\rig}(\cF)^*(-j)) \cong \frac{M_{\dR}(\cF)^*(-j)}{(1 - \varphi) \Fil^0 M_{\dR}(\cF)^*(-j)}\\= M_{\dR}(\cF)^* / \Fil^{-j},
    \end{multline*}
    where the last map is given by $(1 - \varphi)^{-1}$ (which is well-defined, since all eigenvalues of Frobenius on $M_{\rig}(\cF)^*(-j)$ are Weil numbers of weight $(2j - k - k' - 2) < 0$).
    
    Via the compatibility of the cup-products in rigid and finite-polynomial cohomology, 
    \[ 
     \left\langle AJ_{\cF, \syn}\left( \mathrm{AF}^{[k_1, k_2, j]}_{\syn}\right), \eta\right\rangle_{\dR, Y} = \left\langle \mathrm{AF}^{[k_1, k_2, j]}_{\syn}, \tilde\eta\right\rangle_{\FP, P, \cY};
    \]
    we use here the fact that $P(p^{-1}) \ne 0$ in order to define the right-hand side, since this is required in order to define the trace map $H^5_{\FP, c}(\cY, \Qp(3), P) \to L$. Since the cup-product in FP-cohomology satisfies the adjunction formula, we obtain the statement above.
   \end{proof}
   
   \begin{proposition}
    Suppose that at least one of the following hypotheses is satisfied:
    \begin{itemize}
     \item $m \ge 1$ (that is, we do not have $k_1 = k_2 = j$);
     \item $\cF$ is not CM, and not twist-equivalent to its internal conjugate.
    \end{itemize}
    Then the map 
    \[ CG^{(k_1, k_2, j)} \circ \iota^*: H^2_{\dR, c}\left(Y, \cH_{\dR}^{(k_1, k_2)}(1 + j)\right)_L \to H^2_{\dR, c}\left(Y_\QQ, \cH_{\dR}^{(m)}(1)\right)_L \]
    is zero on the direct summand $M_{\dR}(\cF)(1 + j)$ of the domain.
   \end{proposition}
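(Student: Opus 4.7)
My plan is to argue separately according to which of the two hypotheses holds, since the two cases rely on quite different inputs.

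In the case $m \ge 1$, I would show that the target $H^2_{\dR, c}(Y_\QQ, \cH_{\dR}^{(m)}(1))_L$ vanishes outright, making the statement automatic. Since $Y_\QQ = Y_1(N)$ is a smooth affine curve over $L$ of dimension one, Poincar\'e duality identifies this cohomology group (up to a Tate twist) with the dual of $H^0_{\dR}(Y_\QQ, (\cH^{(m)})^\vee)$, which computes the $\pi_1$-invariants of the dual local system. For $m \ge 1$ the monodromy of $\cH^{(m)}$ factors through $\Gamma_1(N) \subset \operatorname{SL}_2(\ZZ)$ acting on $\Sym^m$ of the standard representation, which is absolutely irreducible and non-trivial, so the invariants vanish. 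Alternatively, a BGG-type presentation analogous to Proposition \ref{prop:hypercoho} on the modular curve expresses the target as a quotient of $H^1(\bar X_1(N), \omega^{m+2}(-C))$, which is zero by Serre duality since $H^0(\bar X_1(N), \omega^{-m}) = 0$ for $m \ge 1$. The map is therefore trivially zero on the entire domain.

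In the case $m = 0$ (so $k_1 = k_2 = j$), the target is 1-dimensional over $L$ and a more substantive argument is required. The key step is to pass to the \'etale realisation via the $p$-adic de Rham comparison. The source summand $M_{\dR}(\cF)(1 + j)$ then corresponds, as a representation of $\operatorname{Gal}(\overline{\QQ} / \QQ)$, to a Tate twist of the 4-dimensional Asai Galois representation $\As(V_\cF)$ --- the tensor induction to $\operatorname{Gal}(\overline{\QQ} / \QQ)$ of the 2-dimensional $\operatorname{Gal}(\overline{\QQ} / F)$-representation attached to $\cF$ --- while the target corresponds to the trivial 1-dimensional Galois representation. A non-zero map would exhibit a 1-dimensional Galois-equivariant quotient of $\As(V_\cF)$, contradicting its irreducibility.

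The main obstacle, and the essential input for the case $m = 0$, is the irreducibility of $\As(V_\cF)$ under hypothesis (b). By the standard Mackey-theoretic criterion for reducibility of tensor induction from an index-2 subgroup, $\As(V_\cF)$ is reducible if and only if either $V_\cF \cong V_\cF^\sigma \otimes \chi$ for some character $\chi$ of $\operatorname{Gal}(\overline{\QQ} / F)$ --- equivalently, $\cF$ is twist-equivalent to its internal conjugate $\cF^\sigma$ --- or $V_\cF$ is induced from a character of a quadratic CM extension of $F$ (the CM case). Both possibilities are excluded by hypothesis (b), so $\As(V_\cF)$ is irreducible and the map vanishes. This reducibility criterion is standard but should be accompanied by a precise citation in the final write-up.
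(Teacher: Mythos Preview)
Your proposal is correct and follows essentially the same route as the paper's own proof: in the case $m \ge 1$ the target vanishes outright, and in the case $m = 0$ one passes to the \'etale realisation and uses that the Asai Galois representation is irreducible under hypothesis (b), so it admits no one-dimensional Galois-equivariant quotient. The paper simply asserts the irreducibility, whereas you supply the Mackey-theoretic criterion and a concrete vanishing argument for the target when $m \ge 1$; these are welcome additions but not genuinely different ideas.
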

   
   \begin{proof}
    There is nothing to prove unless $k_1 + k_2 = 2j$ (i.e.~unless $m = 0$), since otherwise the target of this map is zero. To settle the remaining case we can use the comparison with \'etale cohomology: under our hypotheses the space $M_{\et}(\cF)$ is an irreducible 4-dimensional Galois representation, and the \'etale analogue of the above map is Galois-equivariant and has target a 1-dimensional space. 
   \end{proof}
   
   Recall that $m = k_1 + k_2 - 2j \ge 0$. It follows from the above that $(CG^{(k_1, k_2, j)} \circ \iota^*)(\tilde\eta)$ lies in the image of the natural map
   \[ 
    \frac{H^1_{\rig, c}(\overline{Y}_\QQ, \cH^{(m)}(1))}{P(\varphi) \Fil^0} \to H^2_{\FP, c}\left(\cY_\QQ, \cH^{(m)}(1), P\right).
   \]
   (cf.~\cite[\S 2.5]{KLZ1a}). Moreover, this natural map is compatible with cup-product up to a factor of $\tfrac{1}{P(p^{-1})}$. We thus deduce the following:
   
   \begin{proposition}
    If $\xi \in H^1_{\rig, c}(\overline{Y}_\QQ, \cH^{(m)}(1))$ maps to $(CG^{(k_1, k_2, j)} \circ \iota^*)(\tilde\eta)$, then we have
    \[ \left\langle \log_p\left(\mathrm{AF}^{[\cF, j]}_{\et}\right), \eta\right\rangle_{\dR, Y} = 
    \frac{1}{P(p^{-1})}\left\langle \Eis^{k + k' - 2j}_{\rig}, \xi\right\rangle_{\rig, \bar Y_\QQ} = 
    \frac{1}{P(p^{-1})}\left\langle\widetilde\Eis{}^{k + k' - 2j}_{\rig}, \xi|_{\bar Y_\QQ^{\ord}}\right\rangle_{\rig, \bar Y_\QQ^{\ord}}.\]
   \end{proposition}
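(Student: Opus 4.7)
The plan is to chain the finite-polynomial cohomology pairing produced by the previous proposition with two further reductions: first, replacement of the FP-pairing on $\cY_\QQ$ by a rigid cohomology pairing on $\bar Y_\QQ$ (giving the first equality); then transfer of this rigid pairing to one on the ordinary locus $\bar Y_\QQ^{\ord}$ via the lift $\widetilde\Eis{}^{m}_{\rig, N}$ constructed in Section~3 (giving the second). Throughout I write $m = k_1 + k_2 - 2j$.

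For the first equality, I would start from the preceding proposition, whose right-hand side is the FP-pairing
\[
\left\langle \Eis^{m}_{\syn, N},\, (CG^{(k_1, k_2, j)} \circ \iota^*)(\tilde\eta) \right\rangle_{\FP, P, \cY_\QQ}.
\]
The paragraph immediately preceding the proposition statement already established that $(CG^{(k_1, k_2, j)} \circ \iota^*)(\tilde\eta)$ comes from the natural map $H^1_{\rig, c}(\bar Y_\QQ, \cH^{(m)}(1))/P(\varphi)\Fil^0 \to H^2_{\FP, c}(\cY_\QQ, \cH^{(m)}(1), P)$, with any preimage $\xi$ as in the hypothesis; on the other hand, $\Eis^{m}_{\syn, N}$ is $\varphi$-invariant with rigid realisation $\Eis^{m}_{\rig, N}$. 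The key input is the standard compatibility of the Besser cup-product between a $\varphi$-invariant syntomic class and a lifted FP-class with the corresponding rigid cup-product, which introduces a $\tfrac{1}{P(p^{-1})}$ factor coming from the normalisation of the FP trace map (cf.~\cite[\S 2.5]{KLZ1a}); the assumption $P(p^{-1}) \ne 0$ carried over from the previous proposition is exactly what makes this factor well-defined.

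For the second equality, I would appeal to the construction of $\widetilde\Eis{}^{m}_{\rig, N} \in H^1_{\rig, c-ss}(\bar Y_\QQ^{\ord}, \cH^{[m]}(1))$ established in Section~3 as the unique $\varphi$-invariant Hecke-eigen lift of $\Eis^{m}_{\rig, N}$. The restriction map $H^1_{\rig, c}(\bar Y_\QQ, \cH^{(m)}(1)) \to H^1_{\rig, c-\partial}(\bar Y_\QQ^{\ord}, \cH^{(m)}(1))$ and the extension-by-zero map $H^1_{\rig, c-ss}(\bar Y_\QQ^{\ord}, \cH^{[m]}(1)) \to H^1_{\rig}(\bar Y_\QQ, \cH^{[m]}(1))$ are adjoint under the respective Poincar\'e duality pairings on $\bar Y_\QQ$ and $\bar Y_\QQ^{\ord}$; this is a formal consequence of the Gysin long exact sequence for the pair $(\bar Y_\QQ, \bar Z)$. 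Since $\widetilde\Eis{}^{m}_{\rig, N}$ maps to $\Eis^{m}_{\rig, N}$ under the latter map and $\xi$ maps to $\xi|_{\bar Y_\QQ^{\ord}}$ under the former, this adjunction immediately identifies the two pairings.

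The principal obstacle is the careful verification of the FP/rigid cup-product compatibility, in particular isolating the exact source of the $\tfrac{1}{P(p^{-1})}$ factor and confirming the identification of $\Eis^{m}_{\syn, N}$ with the FP-class paired against $\tilde\eta$ under the Besser formalism. The second equality, by contrast, is essentially formal once the lifting result of Section~3 and the adjointness of the Gysin and restriction maps under Poincar\'e duality are granted.
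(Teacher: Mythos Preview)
Your proposal is correct and follows the paper's approach: the paper states this proposition without proof, simply writing ``We thus deduce the following'' after noting that the natural map $H^1_{\rig, c}/P(\varphi)\Fil^0 \to H^2_{\FP, c}$ is compatible with cup-product up to the factor $\tfrac{1}{P(p^{-1})}$. Your decomposition into the FP-to-rigid reduction (first equality) and the transfer to the ordinary locus via the lift $\widetilde\Eis{}^{m}_{\rig, N}$ and Poincar\'e-duality adjointness (second equality) is exactly the intended argument, and you have supplied more detail for the second step than the paper itself does.
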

   
   The restriction $\xi|_{\bar Y_\QQ^{\ord}}$ is a class in $H^1_{\rig, c-\partial}(\bar Y_{\QQ}^{\ord}, \cH^{(m)}(1))$. We have already seen that this rigid cohomology group has a simple presentation in terms of overconvergent cusp forms, and that the linear functional given by product with the rigid Eisenstein class corresponds to projection to the critical-slope Eisenstein quotient of this space. So we would like to compute a representative of $(CG^{(k_1, k_2, j)} \circ \iota^*)(\tilde\eta)$ as an overconvergent modular form.
   
  \subsection{Representatives over the ordinary locus}
  
   We now recall how classes in finite-polynomial cohomology may be constructed. We shall not work with $\cY$ itself, but rather with a slightly modified version of the toroidal compactification $\cX$: we let $\cX^{\mathrm{ord}}$ be the complement, in $\cX$, of the subscheme $\bar Z$ of supersingular points in the special fibre $\bar X$. Thus $\cX^\ord$ is a smooth, but non-proper, $\ZZ_p$-scheme, whose generic fibre is the proper $\Qp$-variety $X$, and whose special fibre is the non-proper $\mathbf{F}_p$-variety $\bar X^\ord = \bar X - \bar Z$ defined above.
   
   As before, we fix $k_1, k_2, j$ with $0 \le j \le \min(k_1, k_2)$, and we set $m = k_1 + k_2 - 2j$. We define a crude, but explicit, space which is an ``approximation'' to the finite-polynomial cohomology of $\cX^\ord$:
   
   \begin{definition}
    We let $B = B(\fN, k_1, k_2, P)$ denote the $L$-vector space consisting of pairs $(f, g)$, where $f \in S_{\mu}(\fN, L)$ is a Hilbert modular form with coefficients in $L$, and $g = (g_1, g_2) \in S^\dagger_{w_1(\mu)}(\fN, L) \oplus S^\dagger_{w_2(\mu)}(\fN, L)$ is a pair of overconvergent forms satisfying
    \[ P\left(p^{-1-j} \varphi\right)(f) = \Theta_1(g_1) + \Theta_2(g_2),\]
    modulo the equivalence relation $(f, g_1, g_2) \cong (f, g_1 + \Theta_2(h), g_2 - \Theta_1(h))$ for $h \in S_{w_1w_2(\mu)}^\dagger$.
   \end{definition}
   
   Here $\varphi$ acts on $S^\dagger_{(k_1+2, k_2 + 2)}$ as the Hecke operator $p^{k_1 + k_2 + 2} \langle p \rangle V(p)$. Note that $B$ is finite-dimensional: the natural map $(f, g) \mapsto f$ has finite-dimensional target, and its kernel is a presentation of the rigid cohomology group $H^1_{\rig, c-\partial}\left(\bar Y^{\ord}, \cH^{(\mu-2)}\right)$ and is therefore also finite-dimensional.
   
   As in \cite{besser00, bannai02}, we attach to $\cX^\ord$ and the filtered isocrystal $\cH^{(k_1, k_2)}$ various complexes of $L$-vector spaces: de Rham cohomology complexes $\Fil^r C^\bullet_{\dR}$ for every $r \ge 0$; a rigid cohomology complex $C^\bullet_{\rig}(\cX^\ord)$ equipped with an action of Frobenius; and a specialisation map relating the two. The actual complexes $C^\bullet$ are rather hard to describe explicitly (they depend on various choices of injective resolutions), but once they are chosen, we can present the space $H^2_{\FP, c-\partial}\left(\cY^\ord, \cH^{(k_1, k_2)}(1 + j), P\right)$ as a mapping fibre:
   \begin{equation}
    \label{eq:fpcoho} 
    \frac{ 
    \left\{ (x, y): x \in \Fil^{1 + j} C^2_{\dR}, y \in C^1_{\rig}\, \middle|\, dx = 0, dy = P(p^{-1-j}\varphi)\mathrm{sp}(x)\right\} }
    {\left\{ (dx', P(\varphi)\mathrm{sp}(x') - dy') : x' \in \Fil^{1 + j} C^1_{\dR}, y' \in C^0_{\rig}\right\}}.
   \end{equation}
  
   \begin{proposition}
    There exist maps 
    \[ S_{(k_1, k_2)}(\fN) \to \Fil^{1 + j} C^2_{\dR},\]
    and
    \[ S^{\dagger}_{(-k_1, k_2 + 2)} \oplus  S^{\dagger}_{(k_1 + 2, -k_2)} \to C^1_{\rig},\]
    which assemble into a map $B \to H^2_{\FP, c-\partial}\left(\cY^\ord, \cH^{(k_1, k_2)}(1 + j), P\right)$.
   \end{proposition}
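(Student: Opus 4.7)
The plan is to realise $C^\bullet_{\dR}$ and $C^\bullet_{\rig}$ concretely via the cuspidal BGG complex of Proposition \ref{prop:tianxiao}, so that the data $(f, g_1, g_2)$ encoded by a class in $B$ becomes manifestly a cocycle in the mapping-fibre presentation \eqref{eq:fpcoho}. We take $C^\bullet_{\dR}$ (with its Hodge filtration) to compute $H^\bullet_{\dR, c}\left(X, \cH^{(\mu-2)}(1+j)\right)$ via global sections of $\mathrm{BGG}_c^\bullet\left(\cH^{(\mu-2)}\right)$ on $X$, and take $C^\bullet_{\rig}$ to be the overconvergent sections over the tube of $\bar X^{\ord}$ of the same complex, equipped with the Frobenius induced from the rigid structure.

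With these models fixed, the map $S_{\mu}(\fN, L) \to \Fil^{1+j} C^2_{\dR}$ sends a classical cusp form $f$ to itself, viewed as a section of $\omega^{\mu}(-C)$: this is the degree-$2$ term of $\mathrm{BGG}_c^\bullet$, and it automatically represents a cocycle since the complex terminates in degree $2$. That the section lies in $\Fil^{1+j}$ of the twisted complex amounts to the inequality $k_1 + k_2 + 2 \ge 2(1+j)$, which holds since $j \le \min(k_1, k_2)$. Similarly, a pair $(g_1, g_2) \in S^\dagger_{w_1(\mu)}(\fN, L) \oplus S^\dagger_{w_2(\mu)}(\fN, L)$ is a section of the degree-$1$ term of the overconvergent cuspidal BGG complex, and hence defines a class in $C^1_{\rig}$.

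It remains to check that the assignment $(f, g_1, g_2) \mapsto (f, (g_1, g_2))$ lands in the mapping fibre \eqref{eq:fpcoho} and respects the equivalence relation defining $B$. By Proposition \ref{prop:tianxiao}, the degree-$1$-to-degree-$2$ differential in $\mathrm{BGG}_c^\bullet$ is exactly $(g_1, g_2) \mapsto \Theta_1(g_1) + \Theta_2(g_2)$, so the relation $P\left(p^{-1-j}\varphi\right)(f) = \Theta_1(g_1) + \Theta_2(g_2)$ imposed in $B$ is precisely the cocycle condition $P(p^{-1-j}\varphi)\,\mathrm{sp}(x) = dy$. Moreover, the equivalence $(g_1, g_2) \sim (g_1 + \Theta_2(h), g_2 - \Theta_1(h))$ for $h \in S^\dagger_{w_1 w_2(\mu)}$ is exactly the image of $h$ under the degree-$0$-to-degree-$1$ differential $(\Theta_2, -\Theta_1)$ of the BGG complex, i.e.\ a coboundary in \eqref{eq:fpcoho} (corresponding to $x' = 0$, $y' = h$).

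The main technical point is establishing the BGG models compatibly within Besser's formalism: one must verify that the cuspidal BGG complex maps to fixed choices of $C^\bullet_{\dR}$ and $C^\bullet_{\rig}$ at the level of complexes (not merely of cohomology), with the natural stupid filtration and the rigid Frobenius tracked correctly through the Tate twist $(1+j)$ (which is what converts $\varphi$ on the untwisted complex into $p^{-1-j}\varphi$ on the twisted one). This is conceptually routine but requires some careful bookkeeping; once it is in place, the construction above is purely formal.
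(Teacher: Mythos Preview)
Your proposal is correct and follows essentially the same strategy as the paper. The only difference is one of presentation: the paper routes the de Rham map through the de Rham complex itself (identifying $S_\mu(\fN)$ with $H^0\bigl(X, \Fil^{k_1+k_2}\cH^{(k_1,k_2)}(-C)\otimes\Omega^2(\log C)\bigr)$ via $\omega^{(k_1,k_2)}=\Fil^{k_1+k_2}\cH^{(k_1,k_2)}$ and $\Omega^2(\log C)=\omega^{(2,2)}$) and invokes the BGG subcomplex only on the rigid side, whereas you use $\mathrm{BGG}_c^\bullet$ uniformly for both; since $\mathrm{BGG}_c^2 = \omega^{\mu}(-C)$ includes naturally in $\mathrm{DR}_c^2$, the two descriptions coincide in degree~2, and your acknowledged ``main technical point'' of mapping BGG into the fixed injective-resolution complexes is exactly what the paper spells out.
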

   
   \begin{proof}
    The complex $\Fil^m C^\bullet_{\dR}$ is given by the global sections of a suitable injective resolution of the algebraic de Rham complex of sheaves on $X$ (cf.~\cite[\S 2.14]{tianxiao}),
    \[ \Fil^m \operatorname{DR}_c^\bullet(\cH^{(k_1,k_2)}) \coloneqq \left(\Fil^{m -\bullet} \cH^{(k_1, k_2)}(-C) \otimes \Omega^{\bullet}_{X / \Qp}(\log C)\right).\]
    Hence, for any $m$, there is a natural map
    \[ H^0\left(X, \Fil^{m-2} \cH^{(k_1, k_2)}(-C) \otimes \Omega^2(\log C)\right) \to C^2_{\dR}.\]
    However, the sheaf $\Omega^2(\log C)$ is just the automorphic line bundle $\omega^{(2, 2)}$, and $\Fil^{k_1 + k_2} \cH^{(k_1, k_2)}$ is $\omega^{(k_1, k_2)}$. So the source of the above map is $H^0(X, \omega^{(k+2, k'+2)}(-C)) = S_\mu(\fN)$.
    
    The rigid cohomology is handled similarly, replacing the variety $X$ with the rigid-analytic space $X^{\rig}$, and the algebraic de Rham complex with its overconvergent analogue $j^\dagger \operatorname{DR}_c^\bullet(\cH^{(k_1,k_2)})$, where $j$ is the inclusion of $X^{\ord}$ into $X$. This gives a natural map 
    \[ 
     H^0\left(X^\rig, j^\dagger \operatorname{DR}_1^\bullet(\cH^{(k_1,k_2)})\right) \to C^\bullet_{\rig}. 
    \]
    However, the complex of sheaves $\operatorname{DR}_c^\bullet(\cH^{(k_1,k_2)})$ is isomorphic to its subcomplex $\operatorname{BGG}^\bullet_c(\cH^{(k_1,k_2)})$ as described in \cite[\S 2.15]{tianxiao}, and $H^0(X^{\rig}, j^\dagger \operatorname{BGG}^1_c)$ is precisely the direct sum $S^\dagger_{(-k_1, k_2 + 2)} \oplus S^\dagger_{(k_1 + 2, -k_2)}$.
    
    Finally, the specialisation map $C^\bullet_{\dR} \to C^\bullet_{\rig}$ is chosen to be compatible with the differentials and with the natural inclusion $H^0(X, \mathrm{DR}^\bullet_c) \into H^0(X^\rig, j^\dagger \mathrm{DR}^\bullet_c)$; so we do indeed obtain a map from $B$ into the quotient \eqref{eq:fpcoho}.
   \end{proof}
   
   \begin{remark}
    The map from $B$ to $H^2_{\FP}$ is neither injective nor surjective in general. We do not know at present how to give a convenient presentation for the space $H^2_{\FP}$ in terms of $p$-adic modular forms.
   \end{remark}
   
   \begin{proposition}
    If $b = (f, g_1, g_2) \in B$, and $\rho$ is the image of $b$ in $H^2_{\FP}$, then $(CG^{(k, k', j)} \circ \iota^*)(\rho)$ is represented by the nearly-overconvergent elliptic modular form
    \[ \frac{ k_1! k_2!}{(k_1-j)!(k_2 - j)!} \iota^*\Big( (-1)^j \delta_1^{k_1 - j}(g_1)  - \delta_2^{k_2 - j} (g_2) \Big) \in \SS^{\dagger, \le m}_{m + 2}(N).\]
   \end{proposition}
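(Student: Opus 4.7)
The plan is to chase the mapping-fibre representative \eqref{eq:fpcoho} of $\rho$ through the composition $\iota^* \circ CG^{(k_1,k_2,j)}$, passing from the BGG complexes of Propositions \ref{prop:hypercoho} and \ref{prop:tianxiao} to the full de Rham/rigid complexes on both sides. On the Hilbert side, the BGG-to-de Rham quasi-isomorphism takes $b = (f, g_1, g_2)$ to a pair $(x, y)$, where $x$ is the image of $f \in H^0(X, \omega^\mu(-C))$ in $H^0(X, \cH^{(\mu-2)} \otimes \Omega^2(-C))$ under the degree-$2$ BGG inclusion, and $y$ is the overconvergent 1-form obtained by inserting $g_i$ into $\cH^{(\mu-2)} \otimes \Omega^1_i$ via the degree-$1$ BGG inclusion on the $i$-th factor (for $i = 1, 2$).

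The first observation is that the $x$-part contributes zero under $\iota^* \circ CG$: since the diagonal embedding $\iota$ satisfies $\iota^*(d\tau_1) = \iota^*(d\tau_2)$, the induced map $\iota^*\Omega^2_{X/\QQ_p} \to \Omega^2_{\cY_\QQ}$ vanishes (as $\Omega^2$ on a curve is zero). Thus $(CG \circ \iota^*)(\rho)$ is represented purely by $\iota^*CG(y)$. To compute this, I use the characterisation of the Hilbert BGG degree-$1$ inclusion, analogous to the normalisation of Proposition \ref{prop:hypercoho}: the composite of the inclusion $\omega^{w_1(\mu)} \into \cH^{(\mu-2)} \otimes \Omega^1_2$ with the natural quotient onto $\omega^{(-k_1, k_2)} \otimes \Omega^1_2 = \omega^{w_1(\mu)}$ (killing the Hodge sub in the first $\cH$-factor while retaining it in the second) equals multiplication by $(-1)^{k_1} k_1!$, and symmetrically for the $g_2$-factor with $(-1)^{k_2}k_2!$. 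Pulling back via $\iota^*$ and projecting via $CG^{(k_1,k_2,j)} : \cH^{(k_1)} \otimes \cH^{(k_2)} \to \cH^{(m)}$, the contribution from $g_1$ becomes a section of $\cH^{(m)} \otimes \Omega^1_{\cY_\QQ}$; expanding in the Hodge filtration basis of $\cH^{(m)}$ and reading off the polynomial Fourier--Whittaker expansion, one identifies this section with a scalar multiple of $\iota^*(\delta_1^{k_1-j}g_1)$, since $\delta_1^{k_1-j}$ is precisely the differential operator shifting from the subquotient $\omega^{(-k_1, k_2)}$ of $\cH^{(k_1, k_2)}$ into the subquotient of weight $(k_1 - 2j, k_2)$ picked out by the $V_m$-component of the Clebsch--Gordan decomposition.

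The main obstacle is the careful bookkeeping of scalars to recover the exact factor $\tfrac{k_1! k_2!}{(k_1-j)!(k_2-j)!}$ and sign $(-1)^j$. These arise as the product of (i) the BGG normalisations $(-1)^{k_i} k_i!$ on the Hilbert side, (ii) the inverse normalisations on the elliptic side when converting a section of $\cH^{(m)} \otimes \Omega^1$ into the nearly-holomorphic polynomial expansion of Proposition \ref{prop:hypercoho}, and (iii) the explicit Clebsch--Gordan coefficient $\binom{k_1}{j}\binom{k_2}{j}$ (up to sign) coming from the highest-weight projection $V_{k_1} \otimes V_{k_2} \to V_m$; after simplification these combine into the announced factor. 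The sign $(-1)^j$ separating the $g_1$ and $g_2$ contributions is tracked back to the sign in the Clebsch--Gordan expansion of $(X_1 Y_2 - X_2 Y_1)^j$, interacting with the $(\Theta_2, -\Theta_1)$ sign convention built into Proposition \ref{prop:tianxiao}. As a sanity check, at $k_1 = k_2 = j = 0$ the formula reduces to $\iota^*(g_1 - g_2)$, which matches the trivial case where the degree-$1$ BGG inclusion carries no factorial normalisation; more substantially, in the degenerate limit replacing the Hilbert modular surface by a product of two modular curves, the formula reduces to the Beilinson--Flach regulator computation of \cite{KLZ1a}, providing an independent consistency check.
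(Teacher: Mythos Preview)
Your approach is essentially the same as the paper's: both observe that the degree-2 (i.e.\ $f$-) part vanishes under $\iota^*$ because $\Omega^2$ on a curve is zero, and then compute the image of the degree-1 BGG inclusion of $(g_1,g_2)$ under $CG^{(k_1,k_2,j)}\circ\iota^*$ by tracking what happens to the highest-weight component. The paper, however, carries out the scalar computation explicitly rather than abstractly: it writes down the exact $q$-expansion of the image of $g_1$ under the BGG inclusion as $\xi_1 = \sum_{a=0}^{k_1}\frac{(-1)^a k_1!}{(k_1-a)!}\theta_1^{k_1-a}(g_1)\,(v^{(k_1-a,a)}\otimes v^{(k_2,0)})\,\tfrac{dq_2}{q_2}$ (and similarly $\xi_2$), then uses the explicit Clebsch--Gordan formulae of \cite[\S 5.1]{KLZ1a} to see that $v^{(k_1-j,j)}\otimes v^{(k_2,0)}\mapsto \frac{k_2!}{(k_2-j)!}v^{(m,0)}$ (and terms with $a<j$ die), and finally invokes the unit-root splitting to conclude that agreement of the degree-0 parts forces equality of the full $\cH^{(m)}$-valued sections.

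Your scalar bookkeeping, by contrast, is sketchy at the crucial point. You claim the Clebsch--Gordan projection contributes a factor $\binom{k_1}{j}\binom{k_2}{j}$ ``up to sign'', but this does not obviously combine with the $(-1)^{k_i}k_i!$ from the BGG normalisation and an unspecified ``inverse normalisation on the elliptic side'' to give $\frac{k_1!k_2!}{(k_1-j)!(k_2-j)!}$; indeed $\binom{k_1}{j}\binom{k_2}{j}$ has an extra $(j!)^{-2}$ that you never cancel. The paper avoids this by reading off the coefficient of $v^{(m,0)}$ directly. You should either reproduce that explicit computation, or at minimum state precisely which basis vectors map where under $CG$ and with what coefficient, rather than appealing to a simplification you don't perform.
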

   
   \begin{proof}
    Via the map of complexes $\operatorname{BGG}^\bullet_c \into \operatorname{DR}_c^\bullet$ described above, the pair $(g_1, g_2)$ determines an overconvergent rigid 1-differential on the ordinary locus of $X_1(\fN)$ with values in $\cH^{(k_1, k_2)}$, and clearly $\iota^*(\rho)$ is represented by the restriction to $X_{\QQ}$ of this 1-differential (since the restriction map on 2-differentials is 0). 
    
    Let us recall how the map $\operatorname{BGG}^\bullet_c \into \operatorname{DR}_c^\bullet$ is defined. In degree 2 it is the natural embedding, while in degree 1 it maps $g_1 \in S^\dagger_{w_1(\mu)}$ to the overconvergent $\cH^{(k_1, k_2)}$-valued 1-differential whose $q$-expansion is
    \[ \xi_1 = \sum_{a = 0}^{k_1} 
     \frac{(-1)^a k_1!}{(k_1 - a)!} \theta_1^{k_1 - a}(g_1) \cdot
     (v^{(k_1 - a, a)} \otimes v^{(k_2, 0)})  \frac{\mathrm{d}q_2}{q_2},
    \]
    where $\theta_1 = q_1 \tfrac{\partial}{\partial q_1}$.
    Similarly, $g_2 \in S^\dagger_{w_2(\mu)}$ is mapped to
    \[ 
     \xi_2 = -\sum_{a = 0}^{k_2} 
     \frac{(-1)^a k_2!}{(k_2 - a)!} \theta_2^{k_2 - a}(g_2) \cdot
     (v^{(k_1,0)} \otimes v^{(k_2-a, a)})  \frac{\mathrm{d}q_1}{q_1}.
    \]
    If $\xi = \xi_1 + \xi_2$, then one verifies easily that 
    \[ \nabla(\xi) = \left( \theta_1^{k_1 + 1} (g_1) + \theta_2^{k_2 + 1}(g_2)\right) \cdot \left(\tfrac{\mathrm{d}q_1}{q_1}\wedge\tfrac{\mathrm{d}q_2}{q_2}\right) = f \cdot \left(\tfrac{\mathrm{d}q_1}{q_1}\wedge\tfrac{\mathrm{d}q_2}{q_2}\right).\]
   
    We now consider the images of these forms under the composition of pullback to $]\bar X_{\QQ}^{\ord}[ \subset X_{\QQ}^{\rig}$ and the Clebsch--Gordan map. Considering the characters by which the diagonal torus acts, we see that Clebsch--Gordan must send $v^{(k_1-a, a)} \otimes v^{(k_2, 0)}$ to zero if $a < j$, and to a scalar multiple of $v^{(m - a + j, a-j)}$ otherwise; and in the boundary case $a = j$, one computes (using the formulae in \cite[\S 5.1]{KLZ1a}) that its image is $\frac{k_2!}{(k_2 - j)!} v^{(m, 0)}$. Similarly, $v^{(k_1,0)} \otimes v^{(k_2-j, j)}$ maps to $(-1)^j \frac{k_1!}{(k_1 - j)!} v^{(m, 0)}$ plus higher-order terms. Consequently, we have
    \[ 
     (CG^{(k_1, k_2, j)} \circ \iota^*)(\xi) = 
      \frac{k_1! k_2!}{(k_1 - j)! (k_2 - j)!}\iota^*\left( (-1)^j \theta_1^{k_1 - j}(g_1) - \theta_2^{k_2 - j}(g_2)\right) v^{(m, 0)} + \dots,
    \]
    where the dots indicate terms involving $v^{(m-b, b)}$ with $b \ge 1$. Hence $(CG^{(k_1, k_2, j)} \circ \iota^*)(\xi)$ and the form in the statement of the proposition are both overconvergent sections of $\cH^{(m)} \otimes \Omega^1_{X^\rig}$ whose images under the unit-root splitting coincide as $p$-adic modular forms, and hence they must be equal.
   \end{proof}  
  
  \subsection{Choice of the $g_i$}
   
   To make the above formulae completely explicit, we explain how to choose the polynomial $P$ and the forms $g_i$ giving a lifting of $\cF$ to the quotient $B$. Recall that we are using the notation $P_p(\cF, T)$ for the polynomial $\det(1 - T \varphi: M_{\rig}(\cF))$, whose roots are the eigenvalues of $\varphi^{-1}$ on $M_{\rig}(\cF)$. 
   
   \begin{lemma} \
    The overconvergent form
    \[  P_p(\cF, V(p)) \cdot \cF \in S^\dagger_\mu(\fN, L) \]
    is in the kernel of the operator $U(p)^3$.
   \end{lemma}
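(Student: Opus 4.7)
The plan is to reduce the claim to a Cayley--Hamilton identity on the finite-dimensional subspace $W = \mathrm{span}_L\{\cF, V(\fp_1)\cF, V(\fp_2)\cF, V(p)\cF\}$ of $S^\dagger_\mu(\fN, L)$. Expanding $P_p(\cF, V(p))\cF = \sum_{n=0}^4 (-1)^n s_n V(p)^n\cF$, where $s_n$ is the $n$-th elementary symmetric polynomial in the four products $\gamma_1\gamma_2$ with $\gamma_i \in \{\alpha_i, \beta_i\}$, and then applying $U(p)^3$ using $U(p)V(p) = \mathrm{id}$, one finds
\[
U(p)^3\bigl(P_p(\cF, V(p))\cF\bigr) = U(p)^3\cF - s_1 U(p)^2\cF + s_2 U(p)\cF - s_3 \cF + s_4 V(p)\cF,
\]
so it is enough to show that this last expression vanishes.

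First I would verify that $W$ is stable under both $U(\fp_1)$ and $U(\fp_2)$, using the commutation $U(\fp_i)V(\fp_j) = V(\fp_j)U(\fp_i)$ for $i \ne j$, the identity $U(\fp_i)V(\fp_i) = \mathrm{id}$, and the Hecke relation $U(\fp_i)\cF = (\alpha_i+\beta_i)\cF - \alpha_i\beta_i V(\fp_i)\cF$ (which is equivalent to the statement that $\cF_{\alpha_i} \coloneqq \cF - \beta_i V(\fp_i)\cF$ is a $U(\fp_i)$-eigenform of eigenvalue $\alpha_i$). Next I would establish the Cayley--Hamilton-type identity $\chi(U(p))\cF = 0$, with $\chi(X) = \prod_{\gamma_1,\gamma_2}(X-\gamma_1\gamma_2) = X^4 - s_1 X^3 + s_2 X^2 - s_3 X + s_4$. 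This follows from the observation that $\chi(XY)$ lies in the ideal of $L[X,Y]$ generated by $(X-\alpha_1)(X-\beta_1)$ and $(Y-\alpha_2)(Y-\beta_2)$ --- a short argument via the $4$-dimensional quotient ring in which $XY$ takes precisely the values $\gamma_1\gamma_2$ --- combined with the Hecke relations above, which show that both generators already annihilate $\cF$ in $S^\dagger_\mu(\fN, L)$.

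For the final step, note that $s_4 = (\alpha_1\beta_1)^2(\alpha_2\beta_2)^2 \ne 0$, so the identity $\chi(U(p))|_W = 0$ forces $U(p)|_W$ to have nonzero determinant and hence to be invertible. Since $V(p)\cF \in W$ and $U(p)\cdot V(p)\cF = \cF$, this yields $V(p)\cF = U(p)^{-1}\cF$ in $W$, and applying $U(p)^{-1}$ to the Cayley--Hamilton identity and substituting gives exactly the vanishing of the displayed expression above. The only real subtlety is the bookkeeping in degenerate cases where some $\alpha_i = \beta_i$, so that $W$ may have dimension less than $4$ or $U(p)|_W$ may fail to be semisimple; but the argument is uniform, since we only use that $\chi$ annihilates $U(p)|_W$, not that it is the minimal polynomial.
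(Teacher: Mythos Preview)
Your argument is correct, but it takes a genuinely different route from the paper. The paper's proof is essentially a one-line Fourier--Whittaker computation: it observes directly that
\[
 P_p(\cF, V(p))\cdot\cF = \bigl(1 - \alpha_1\beta_1\alpha_2\beta_2\, V(p)^2\bigr)\,\cF^{[p]},
\]
where $\cF^{[p]} = (1 - V(p)U(p))\cF$ is the $p$-depletion; since $U(p)\cF^{[p]} = 0$, the result is immediate. Your approach instead isolates the finite-dimensional $U(\fp_1)$-, $U(\fp_2)$-stable subspace $W$ and runs a Cayley--Hamilton / invertibility argument there. This is more structural and avoids any explicit coefficient manipulation, at the cost of being longer; the paper's version has the side benefit of producing an explicit closed formula for $P_p(\cF, V(p))\cdot\cF$.

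One small point to tighten: you establish $\chi(U(p))\cF = 0$ (the two quadratic generators annihilate $\cF$), but then invoke ``$\chi(U(p))|_W = 0$'' to conclude that $U(p)|_W$ is invertible. These are not the same statement. The fix is easy: check that $(U(\fp_i)-\alpha_i)(U(\fp_i)-\beta_i)$ annihilates not just $\cF$ but each of the four spanning vectors of $W$ (a short direct computation using $U(\fp_i)V(\fp_i) = \mathrm{id}$ and $U(\fp_i)V(\fp_j) = V(\fp_j)U(\fp_i)$ for $i \ne j$). Then each $U(\fp_i)|_W$ satisfies a quadratic with nonzero constant term $\alpha_i\beta_i$ and is therefore invertible, so $U(p)|_W$ is too. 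Alternatively you can bypass $\chi$ entirely at this step and deduce invertibility of $U(p)|_W$ directly from that of the $U(\fp_i)|_W$; then $U(p)v = \chi(U(p))\cF = 0$ with $v \in W$ forces $v = 0$, which is exactly what you need.
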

   
   \begin{proof}
    It follows easily from the recurrences satisfied by the Fourier--Whittaker coefficients of $\cF$ that 
    \[ P_p(\cF, V(p)) = (1 - \alpha_1 \alpha_2\beta_1\beta_2 V(p^2)) \cF^{[p]}, \]
    where $\cF^{[p]}$ is the $p$-depletion of $\cF$. This is clearly in the kernel of $U(p)^3$.
   \end{proof}
   
   Since $U(p)$ acts invertibly on the rigid cohomology group $H^2_{\rig, c-\partial}(Y_1(\fN)^{\ord}, \cH^{(k_1, k_2)})$, it follows from Proposition \ref{prop:tianxiao} that $P_p(\cF, V(p)) \cdot \cF$ lies in the sum of the images of the two $\Theta$ operators. Since the Frobenius map on $\cH^{(k_1, k_2)}\otimes \Omega^2$ is given by $p^{k_1 + k_2 + 2} \langle p \rangle V(p)$, this gives a lifting of $\cF$ to $B$, taking the polynomial $P$ to be 
   \[ P(T) = P_p\left(\cF, \frac{p^j T}{p^{k_1 + k_2 + 1} \varepsilon_\cF(p)}\right).\]
   
   We can build a specific choice of lifting to $B$ by considering Hecke operators at the primes $\fp_1, \fp_2$ above $p$. Recall that we have defined $U(\fp_i) = p^{-t_i} \cU(\fp_i)$, and similarly $V(\fp_i)$. 
   
   \begin{notation}
    We write $P_{\fp_i}(\cF, T)$ for the polynomial $(1 - \alpha_{\fp_i} T)(1 - \beta_{\fp_i} T)$. 
   \end{notation}
   
   Thus $P_p(\cF, T)$ is the ``star product'' of $P_{\fp_1}(\cF, T)$ and $P_{\fp_2}(\cF, T)$ in the notation of \cite[\S 2]{besser00a} -- the polynomial whose roots are the pairwise products of the roots of the two quadratics. One sees easily that for each $i \in \{1, 2\}$ we have $P_{\fp_i}\left(\cF, V(\fp_i)\right)\cdot \cF = \cF^{[\fp_i]}$, the $\fp_i$-depletion of $\cF$; this is in the kernel of $U(\fp_i)$, and hence defines the trivial class in $H^2_{\rig}$, as before.

   \begin{proposition}
    For each $i \in \{1, 2\}$, we can find a pair of forms $\left(g_1^{(i)}, g_2^{(i)}\right) \in S^\dagger_{w_1(\mu)} \oplus S^\dagger_{w_2(\mu)}$ such that:
    \begin{itemize}
     \item We have $\Theta_1\left(g_1^{(i)}\right) + \Theta_2\left(g_2^{(i)}\right) = \cF^{[\fp_i]}$.
     \item For every prime $\fq \nmid p\fN$, the pair 
     \[ \left((\cT(\fq) - \mu(\fq))\cdot g_1^{(i)}, (\cT(\fq) - \mu(\fq))\cdot g_2^{(i)}\right) \]
     defines the zero class in $H^1_{\rig, c-\partial}(Y^{\ord}, \cH^{(k_1, k_2)})$, where $\mu(\fq)$ is the $\cT(\fq)$-eigenvalue of $\cF$.
     \item Both $g_1^{(i)}$ and $g_2^{(i)}$ are in the kernel of $U(\fp_i)$.
    \end{itemize}
   \end{proposition}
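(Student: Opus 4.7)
I would construct $(g_1^{(i)}, g_2^{(i)})$ in three refinements: first exhibit any pair $(\tilde g_1, \tilde g_2) \in S^{\dagger}_{w_1(\mu)} \oplus S^{\dagger}_{w_2(\mu)}$ satisfying $\Theta_1 \tilde g_1 + \Theta_2 \tilde g_2 = \cF^{[\fp_i]}$; then adjust so that both components are killed by $U(\fp_i)$; and finally adjust so that the pair becomes a strict Hecke eigenvector for the operators $\cT(\fq)$ with $\fq \nmid p\fN$. These refinements are compatible because $\fp_i$-depletion commutes with the Hecke operators away from $p\fN$.

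\textbf{Initial lift and $U(\fp_i)$-condition.} The form $\cF^{[\fp_i]}$ is annihilated by $U(\fp_i)$, via the identity $U(\fp_i)(1 - V(\fp_i)U(\fp_i)) = 0$ coming from $U(\fp_i)V(\fp_i) = 1$. Since $U(\fp_i)$ acts invertibly on $H^2_{\rig, c-\partial}(\bar Y^{\ord}, \cH^{(\mu-2)})$ (the same invertibility used in the proof of Proposition \ref{prop:sstheta}), the class of $\cF^{[\fp_i]}$ there vanishes, so Proposition \ref{prop:tianxiao} supplies some lift $(\tilde g_1, \tilde g_2)$. Replacing each $\tilde g_j$ by its $\fp_i$-depletion preserves the identity $\Theta_1 \tilde g_1 + \Theta_2 \tilde g_2 = \cF^{[\fp_i]}$, because $\Theta_j$ commutes with $V(\fp_i)U(\fp_i)$ and $\cF^{[\fp_i]}$ is already $\fp_i$-depleted; and by construction $U(\fp_i)$ now kills both components.

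\textbf{Hecke eigenvector refinement.} For any such lift and any $\fq \nmid p\fN$, the pair $((\cT(\fq)-\mu(\fq))\tilde g_1, (\cT(\fq)-\mu(\fq))\tilde g_2)$ has image $(\cT(\fq)-\mu(\fq))\cF^{[\fp_i]} = 0$ under $(\Theta_1, \Theta_2)$, and therefore defines a class in the finite-dimensional Hecke module $W := H^1_{\rig, c-\partial}(\bar Y^{\ord}, \cH^{(\mu-2)})$. The obstruction to modifying $(\tilde g_1, \tilde g_2)$ by an element of $W$ to make the resulting pair a strict $\cF$-eigenvector lies in the generalized $\cF$-eigenspace of $W$ under the Hecke algebra away from $p\fN$: if that eigenspace vanishes, a standard linear-algebra argument (comparing generalized eigenspaces in the short exact sequence defining $W$ as the kernel of $(\Theta_1,\Theta_2)$) shows that $\cF^{[\fp_i]}$ lifts uniquely to a strict eigenvector, to which one reapplies $\fp_i$-depletion (preserving the eigenvector property) to obtain the required $(g_1^{(i)}, g_2^{(i)})$.

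\textbf{Main obstacle.} The hard part, in my view, is proving the vanishing of the generalized $\cF$-eigenspace in $W = H^1_{\rig, c-\partial}(\bar Y^{\ord}, \cH^{(\mu-2)})$. The automorphic input to invoke is that a cuspidal Hilbert modular eigensystem contributes only to the middle-degree ($H^2$) cohomology of the Hilbert modular variety, never to $H^1$; accordingly, in the BGG presentation of Proposition \ref{prop:tianxiao}, the Hecke eigensystems appearing in this $H^1$ should be Eisenstein-like, arising from the toroidal boundary and --- via a Gysin sequence analogous to the one used in the elliptic case earlier in the paper --- from the supersingular strata, and thus cannot coincide with the cuspidal system of $\cF$. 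Care will be needed with the mixed ``compact supports toward cusps but not toward supersingular points'' condition, since this is exactly what makes $W$ differ from the classical interior cohomology, but the Eisenstein nature of the extra contributions should still rule out the cuspidal system of $\cF$.
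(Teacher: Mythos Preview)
Your approach is essentially the paper's, with the refinements reordered: the paper first arranges the Hecke condition (asserting, as you do, that the cuspidal eigensystem of $\cF$ does not appear in $H^1_{\rig}(\bar Y^{\ord}, \cH^{(k_1,k_2)})$) and only afterwards applies $\fp_i$-depletion, whereas you deplete first and then worry about Hecke.

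That said, your ordering actually makes the ``main obstacle'' disappear, and you have worked harder than necessary. Once you have replaced $(\tilde g_1,\tilde g_2)$ by their $\fp_i$-depletions, both components lie in $\ker U(\fp_i)$; since the $\cT(\fq)$ commute with $U(\fp_i)$, the pairs $\bigl((\cT(\fq)-\mu(\fq))\tilde g_1,(\cT(\fq)-\mu(\fq))\tilde g_2\bigr)$ also lie in $\ker U(\fp_i)$, and therefore the classes they define in $H^1_{\rig,c-\partial}(\bar Y^{\ord},\cH^{(k_1,k_2)})$ are killed by $U(\fp_i)$. But $U(\fp_i)$ acts invertibly on this $H^1$ just as on $H^2$ (it is a Frobenius-type operator on rigid cohomology in every degree), so these classes are zero and the second bullet is automatic. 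The paper records exactly this observation in the remark immediately following its proof: the third condition implies the second. So you may delete the entire ``Hecke eigenvector refinement'' and ``Main obstacle'' paragraphs and simply invoke the invertibility of $U(\fp_i)$ on $H^1_{\rig,c-\partial}$; no analysis of which eigensystems occur in $H^1$ is required.
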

   
   \begin{proof}
    Since $U(\fp_i)$ acts invertibly on the rigid $H^2$, the existence of a pair $(g_1^{(i)}, g_2^{(i)})$ satisfying the first condition is immediate. Since the system of Hecke eigenvalues associated to $\cF$ does not appear in $H^1_{\rig}(Y^{\ord}, \cH^{(k_1, k_2)})$, we can arrange that the second condition is satisfied. 
    
    Finally, since the $\fp_1$-depletion operator $1 - V(\fp_1) U(\fp_1)$ acts on $S^\dagger_{w_1(\mu)} \oplus S^\dagger_{w_2(\mu)}$ compatibly with its Hecke action and with the map to $S^\dagger_{\mu}$, and it sends $\cF^{[\fp_1]}$ to itself, applying this operator to an arbitrary pair $(g_1^{(i)}, g_2^{(i)})$ satisfying the first two conditions will give a pair satisfying all three.
   \end{proof}
   
   \begin{remark} \
    \begin{enumerate}
     \item The third condition implies the second, since $U(\mathfrak{p}_i)$ acts invertibly on $H^1_{\rig, c-\partial}\left(\bar Y^{\ord}, \cH^{(k_1, k_2)}\right)$. In fact, one can check that this group actually vanishes unless $k_1 = k_2 = 0$; via various exact sequences this ultimately follows from the fact that the group  $\GL_2(\cO_F)$ has the congruence subgroup property, which forces $H^1(Y_1(\fN), -)$ to vanish for any coefficient sheaf.

     \item If Conjecture \ref{conj:thetaimage} holds for $\cF$, then we can take $g_1^{(1)} = \Theta_1^{-1}(\cF^{[\fp_1]})$, and $g_2^{(1)} = 0$. Similarly, we can choose $g_1^{(2)} = 0$ if Conjecture \ref{conj:thetaimage} holds for the internal conjugate $\cF^{\sigma}$. However, we are not assuming this at present.
    \end{enumerate}
   \end{remark}
   
   Since $P_p(\cF, X)$ is the star product of $P_{\fp_1}$ and $P_{\fp_2}$, we can construct a preimage of $P_p(\cF, V(p)) \cdot \cF$ out of the four forms $g_r^{(s)}$, following the construction of cup-products in \cite{besser00a}. We choose polynomials $a(T_1, T_2)$ and $b(T_1, T_2)$ such that
   \begin{equation}
    \label{eq:cupprod}
    a(T_1, T_2) P_{\fp_1}(\cF, T_1) + b(T_1, T_2) P_{\fp_2}(\cF, T_2) = P_p(\cF, T_1 T_2).
   \end{equation}
   Then the forms $(h_1, h_2)$ defined by
   \[ 
    h_1 = a\left(p^{-(k_1 + 1)} V(\fp_1), V(\fp_2)\right) g_1^{(1)} + b(p^{-(k_1 + 1)} V(\fp_1), V(\fp_2)) g_1^{(2)}
   \]
   and
   \[ 
    h_2 = a(V(\fp_1), p^{-(k_2 + 1)} V(\fp_2)) g_2^{(1)} + b(V(\fp_1), p^{-(k_2 + 1)} V(\fp_2)) g_2^{(2)}
   \]
   satisfy $\Theta_1(h_1) + \Theta_2(h_2) = P_p(\cF, V(p)) \cdot \cF$, and they define the unique lift of $\cF$ to $B$ which lies in the $\cF$-eigenspace for the Hecke operators outside $p$.

   \begin{proposition}
    The identity \eqref{eq:cupprod} is satisfied by the polynomials
    \[ a(T_1, T_2) = \alpha_1\beta_1\alpha_2\beta_2 (\alpha_2 + \beta_2) T_1^2T_2^3 -\alpha_1\beta_1\alpha_2\beta_2 T_1^2T_2^2 -\alpha_2 \beta_2(\alpha_1 +\beta_1)T_1T_2^2 + 1\]
    and
    \[ b(T_1, T_2) = \alpha_1^2\beta_1^2\alpha_2\beta_2T_1^4T_2^2 - \alpha_1 \beta_1 (\alpha_2 +\beta_2)T_1^2T_2 -\alpha_1\beta_1 T_1^2 + (\alpha_1 + \beta_1)T_1.\makeatletter\displaymath@qed\]
   \end{proposition}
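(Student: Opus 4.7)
The claimed identity is a polynomial equality in $T_1, T_2$ with coefficients in $L[\alpha_1, \beta_1, \alpha_2, \beta_2]$, so in principle it can be verified directly by expansion. A more structural approach exploits the ``star product'' shape of $P_p(\cF, T)$: its roots are exactly the pairwise products $\lambda\mu$ with $\lambda \in \{\alpha_1, \beta_1\}$ and $\mu \in \{\alpha_2, \beta_2\}$, so $P_p(\cF, T_1T_2)$ vanishes at every simultaneous zero of $P_{\fp_1}(\cF, T_1)$ and $P_{\fp_2}(\cF, T_2)$. In the UFD $L[T_1, T_2]$, a polynomial vanishing on the variety $V(P_{\fp_1}(T_1)) \cap V(P_{\fp_2}(T_2))$ lies in the ideal $(P_{\fp_1}(T_1), P_{\fp_2}(T_2))$, giving \emph{some} decomposition of the required form. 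The task is then to show that the explicit polynomials $a, b$ in the statement provide such a decomposition.

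To pin these down, the plan is first to reduce modulo $P_{\fp_1}(T_1)$: since this factor has simple roots $T_1 = 1/\alpha_1$ and $T_1 = 1/\beta_1$, by CRT it suffices to check the congruence at these two values. A short substitution gives $b(1/\alpha_1, T_2) = (1 - \tfrac{\beta_1 \alpha_2}{\alpha_1} T_2)(1 - \tfrac{\beta_1 \beta_2}{\alpha_1} T_2)$, which is exactly $P_p(\cF, T_2/\alpha_1)/P_{\fp_2}(\cF, T_2)$; the check at $T_1 = 1/\beta_1$ is symmetric. An analogous calculation modulo $P_{\fp_2}(T_2)$ confirms the expected values of $a(T_1, 1/\alpha_2)$ and $a(T_1, 1/\beta_2)$. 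These four evaluations together show that the difference $D := a\,P_{\fp_1}(\cF, T_1) + b\,P_{\fp_2}(\cF, T_2) - P_p(\cF, T_1T_2)$ is divisible by both $P_{\fp_1}(T_1)$ and $P_{\fp_2}(T_2)$, hence, by coprimality in the UFD, by their product.

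The final step is to deduce that $D$ itself vanishes. A bidegree count gives $\deg_{T_1, T_2}(a\, P_{\fp_1}) \le (4,3)$, whereas $b\,P_{\fp_2}$ and $P_p(\cF, T_1T_2)$ both have bidegree $(4,4)$, so the quotient $c := D/(P_{\fp_1}P_{\fp_2})$ has bidegree at most $(2,2)$. Substituting $T_1 = 0$ one sees $a(0, T_2) = 1$ and $b(0, T_2) = 0$, so $D|_{T_1 = 0} = 1 \cdot 1 + 0 - 1 = 0$, forcing $c(0, T_2) = 0$; symmetrically $c(T_1, 0) = 0$. Comparing the $T_1^4$ and $T_2^4$ coefficients on both sides (which match $\alpha_1^2\beta_1^2\alpha_2^2\beta_2^2 T_2^4$ and $\alpha_1^2\beta_1^2\alpha_2^2\beta_2^2 T_1^4$ respectively) then constrains the leading terms of $c$ and forces it to be constant, and the vanishing at the origin gives $c = 0$. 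The main potential obstacle is the bookkeeping of coefficients in this concluding degree argument; but it is entirely mechanical, and one can alternatively finish by brute-force expansion of both sides, or by appealing to the explicit star-product construction of Bezout coefficients in \cite[\S 2]{besser00a}.
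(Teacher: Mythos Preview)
The paper gives no written proof of this proposition: the statement ends with a QED box, indicating that the identity is left as a direct symbolic verification. Your proposal supplies a more structural argument, first checking the identity modulo $P_{\fp_1}(T_1)$ and modulo $P_{\fp_2}(T_2)$ by evaluating at the roots $T_i = 1/\alpha_i, 1/\beta_i$, and then using degree considerations to show the difference $D$ is identically zero. The evaluation checks are correct, and this viewpoint has the merit of explaining \emph{why} polynomials of this shape exist (cf.\ the star-product formalism of \cite{besser00a}), whereas the paper's bare verification does not.

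There is, however, a small gap in the concluding degree argument. Vanishing of the $T_1^4$- and $T_2^4$-coefficients of $D$ shows only that $c = D/(P_{\fp_1}P_{\fp_2})$ has bidegree $\le (1,1)$; combined with $c(0,T_2)=c(T_1,0)=0$ this gives $c = \gamma\, T_1 T_2$ for some scalar $\gamma$, not ``$c$ constant'' as you assert. Since $T_1 T_2$ also vanishes at the origin, the final clause ``vanishing at the origin gives $c=0$'' does not force $\gamma=0$. One further check is needed, for instance the $T_1^3 T_2^3$ coefficient of $D$, or a single evaluation at a point off the coordinate axes and the zero loci of the $P_{\fp_i}$. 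As you yourself note, this residual step is purely mechanical and is in any case subsumed by the brute-force expansion the paper implicitly relies on.
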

  
   These polynomials are carefully chosen so that almost all of their terms will contribute nothing to the final formula, because of the following lemma (which is an analogue for Hilbert modular forms of \cite[Lemma 2.17]{darmonrotger14} and \cite[Lemma 6.4.6]{KLZ1a} in the Rankin--Selberg setting).
   
   \begin{lemma}
    Suppose $x, y$ are non-negative integers with $x > y$, and let $\cG$ be a $p$-adic Hilbert modular form (not necessarily overconvergent) whose Fourier--Whittaker coefficients $c(\fm, \cG)$ are zero unless $v_{\fp_1}(\fm) \ge x$ and $v_{\fp_2}(\fm) = y$. Then the $p$-adic elliptic modular form $\iota^*(\cG)$ is in the kernel of $U(p)^{1 + y}$.
   \end{lemma}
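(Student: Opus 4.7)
The plan is to reduce the statement to a direct computation on $q$-expansions. The pullback $\iota^*$ is defined geometrically, but it acts on Fourier--Whittaker expansions in a transparent way: setting $\tau_1 = \tau_2 = \tau$ in the expansion
\[ \cG(\tau_1, \tau_2) = \sum_{\lambda \gg 0 } \sigma_1(\lambda)^{-t_1} \sigma_2(\lambda)^{-t_2} c(\lambda, \cG) \exp\!\left( 2 \pi i \left[\tau_1 \sigma_1(\lambda) + \tau_2 \sigma_2(\lambda)\right]\right) \]
produces the elliptic $q$-expansion
\[ \iota^*(\cG) = \sum_{n \ge 1} b_n q^n, \qquad b_n = \sum_{\substack{\lambda \gg 0 \\ \operatorname{Tr}_{F/\QQ}(\lambda) = n}} \sigma_1(\lambda)^{-t_1} \sigma_2(\lambda)^{-t_2} c(\lambda, \cG), \]
where $\lambda$ runs over totally positive elements of an appropriate fractional ideal. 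Since $U(p)^{1+y}$ sends $\sum b_n q^n$ to $\sum b_{p^{1+y} n} q^n$, the claim is equivalent to showing $b_m = 0$ whenever $p^{1+y} \mid m$.

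The second step is to translate the hypothesis on $c(\fm, \cG)$ into a statement about $\lambda$. Because $p$ is unramified in $F$, the different $\fd$ is prime to $p$, so $v_{\fp_i}(\lambda) = v_{\fp_i}(\lambda \fd)$ for $i = 1,2$, and the hypothesis reads: $c(\lambda, \cG) = 0$ unless $v_{\fp_1}(\lambda) \ge x$ and $v_{\fp_2}(\lambda) = y$. Moreover, by construction the embedding $\sigma_i \colon F \into L$ corresponds to $\fp_i$, so $v_p(\sigma_i(\lambda)) = v_{\fp_i}(\lambda)$ (extending the $p$-adic valuation to $L$).

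The final step is a valuation computation. For any $\lambda$ contributing nontrivially to $b_m$, we have $v_p(\sigma_1(\lambda)) \ge x > y = v_p(\sigma_2(\lambda))$, and since $\operatorname{Tr}_{F/\QQ}(\lambda) = \sigma_1(\lambda) + \sigma_2(\lambda)$ (computed inside $L$, since both sides agree there), the ultrametric inequality with unequal valuations gives
\[ v_p(m) = v_p(\operatorname{Tr}_{F/\QQ}(\lambda)) = \min\bigl(v_p(\sigma_1(\lambda)), v_p(\sigma_2(\lambda))\bigr) = y. \]
In particular $p^{1+y} \nmid m$, contradicting the assumption $p^{1+y} \mid m$. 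Hence $b_m = 0$ for every $m$ divisible by $p^{1+y}$, and $U(p)^{1+y} \iota^*(\cG) = 0$.

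There is no serious obstacle here; the only point requiring care is the bookkeeping to connect the three notions of ``$p$-power order'' in play (the valuation of an ideal $\fm$, the $\fp_i$-adic valuation of a generator $\lambda$, and the $p$-adic valuation of the rational number $\operatorname{Tr}(\lambda)$). Once one fixes conventions and uses that $p = \fp_1\fp_2$ is unramified with $\sigma_i$ corresponding to $\fp_i$, the three are identified as above and the conclusion is immediate.
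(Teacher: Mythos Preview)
Your proof is correct and follows essentially the same route as the paper's: both arguments observe that any $\lambda$ contributing to the Fourier expansion of $\iota^*(\cG)$ must have $v_p(\Tr \lambda) = y$ by the ultrametric inequality, so $\iota^*(\cG)$ is supported on $q^n$ with $v_p(n) = y$ and hence killed by $U(p)^{1+y}$. You have simply spelled out more of the bookkeeping (the identification $v_p(\sigma_i(\lambda)) = v_{\fp_i}(\lambda)$ and the irrelevance of the different) that the paper leaves implicit.
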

   
   \begin{proof}
    If $\lambda \in (\fd^{-1})^+$ satisfies the conditions $v_{\fp_1}(\lambda) \ge x$ and $v_{\fp_2}(\lambda) = y$, then we must have $v_p(\Tr \lambda) = y$. Since the coefficient of $q^n$ in the Fourier expansion of $\iota^*(\cG)$ is given by $\sum_{\lambda: \Tr(\lambda) = n} \lambda^{-\underline{t}} c(\lambda, \cG)$, this implies that the Fourier expansion of $\iota^*(\cG)$ is supported on coefficients of $p$-adic valuation $y$, and is therefore in the kernel of $U(p)^{1 + y}$.
   \end{proof}
   
   Since all the monomials in $b(T_1, T_2)$ are of the form $T_1^x T_2^y$ with $x > y$, and we are applying the operators $b(p^{-(k_1 + 1)} V(\fp_1), V(\fp_2))$ and $b(V(\fp_1),p^{-(k_2 + 1)}  V(\fp_2))$ to forms which are $\fp_2$-depleted, the result will pull back to a differential which lies in the kernel of $U(p)$ and is therefore exact. Similarly, the terms involving $T_1^2 T_2^3$ and $T_1 T_2^2$ in $a(T_1, T_2)$ can be neglected.
   
   So if $\tilde \eta$ is the unique Hecke-equivariant lifting of $\cF$ to a class in FP-cohomology, we conclude that $CG^{(k_1, k_2, j)}\left(\iota^*(\eta)\right)$ is represented by the class of the nearly-overconvergent cusp form
   \[ 
    (1 - \alpha_1 \beta_1 \alpha_2 \beta_2 p^{-2-2j} V(p^2)) \cdot \left[(-1)^j \delta_1^{k_1 - j} g_1^{(1)} - \delta_2^{k_2 - j}g_2^{(1)}\right], 
   \]
   and the forms $g_1^{(2)}$, $g_2^{(2)}$ do not enter the formula.
   
   Putting this together we have:
   
   \begin{proposition}
    \label{thm:regAE1}
    Let $\eta$ be the class in $M_\dR(\cF)$ corresponding to the normalised eigenform $\cF$. Then the regulator of the Asai--Flach class is
    \begin{multline*}
     \left\langle \log\left(\mathrm{AF}^{[\cF, j]}_{\et}\right), \eta\right\rangle =  \frac{\left(1 - \tfrac{p^{2j}}{\alpha_1 \beta_1 \alpha_2 \beta_2}\right)}
    {\left(1 - \tfrac{p^j}{\alpha_1 \alpha_2}\right)\left(1 - \tfrac{p^j}{\alpha_1 \beta_2}\right)\left(1 - \tfrac{p^j}{\beta_1 \alpha_2}\right)\left(1 - \tfrac{p^j}{\beta_1 \beta_2}\right)}\cdot
    \frac{k_1! k_2!}{(k_1 - j)!(k_2-j)!}\\
    \times \left\langle \widetilde\Eis^{k_1 + k_2 - 2j}_{\rig, N}, \Pi^{\mathrm{oc}}\iota^*\left( (-1)^j \delta_1^{k_1-j}(g_1^{(1)}) - \delta_2^{k_2 - j} g_2^{(1)}\right)\right\rangle_{\rig}.
    \end{multline*}
    In particular, if the projection of $\Pi^{\mathrm{oc}}\iota^*\left( (-1)^j \delta_1^{k_1-j}(g_1^{(1)}) - \delta_2^{k_2 - j} g_2^{(1)}\right)$ to the critical-slope Eisenstein quotient is non-zero, then the Asai--Flach class is also non-zero.
   \end{proposition}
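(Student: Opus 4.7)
The strategy is to assemble the explicit representatives and vanishing results established in the preceding subsections. One starts with the formula (proven in the earlier subsection)
\[
\left\langle \log_p\!\bigl(\mathrm{AF}^{[\cF, j]}_{\et}\bigr), \eta\right\rangle_{\dR, Y} = \frac{1}{P(p^{-1})}\, \left\langle \widetilde\Eis^{m}_{\rig, N},\; \xi\vert_{\bar Y_\QQ^{\ord}}\right\rangle,
\]
where $P(T) = P_p\bigl(\cF,\, p^j T/(p^{k_1 + k_2 + 1}\varepsilon_\cF(p))\bigr)$. Since the four roots of $P_p(\cF, -)$ are $\alpha_1\alpha_2, \alpha_1\beta_2, \beta_1\alpha_2, \beta_1\beta_2$ and $\alpha_1\beta_1\alpha_2\beta_2 = p^{k_1+k_2+2}\varepsilon_\cF(p)$, one has $P(p^{-1}) = P_p(\cF, p^j/\alpha_1\beta_1\alpha_2\beta_2)$, which is precisely the denominator appearing in the statement.

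Next, I would realise the lift $\tilde\eta$ in $B$ via the star-product construction $(h_1, h_2)$ built from the four auxiliary forms $g_r^{(s)}$ and the polynomials $a, b$ of the previous subsection. Combined with the nearly-overconvergent formula for $(CG^{(k_1,k_2,j)}\circ\iota^*)(\rho)$, this gives
\[
(CG^{(k_1,k_2,j)}\circ\iota^*)(\tilde\eta) = \frac{k_1!\,k_2!}{(k_1-j)!(k_2-j)!}\, \iota^*\!\left((-1)^j \delta_1^{k_1-j}(h_1) - \delta_2^{k_2-j}(h_2)\right).
\]
Frobenius-equivariance of the Poincar\'e pairing together with the $\varphi$-invariance of $\widetilde\Eis{}^m_{\rig, N}$ show that $V(p)$ acts on the functional $\langle \widetilde\Eis, -\rangle$ as multiplication by the scalar $p^{-(m+1)}\varepsilon_\cF(p)^{-1}$, and so $V(p^2)$ acts as $p^{-2(m+1)}\varepsilon_\cF(p)^{-2}$. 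In particular, any term lying in the kernel of a positive power of $U(p)$ is annihilated by the pairing.

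The key combinatorial step is a monomial-by-monomial analysis of $a, b$ using the $U(p)$-kernel lemma. Each $g_r^{(s)}$ is in $\ker\cU(\fp_s)$, so its Fourier--Whittaker support lies on ideals $\fm$ with $v_{\fp_s}(\fm) = 0$; after applying the operator attached to a monomial $T_1^x T_2^y$, one of $v_{\fp_1}(\fm), v_{\fp_2}(\fm)$ becomes exactly $x$ (or $y$) while the other is $\ge y$ (or $\ge x$). The $U(p)$-kernel lemma then forces the $\iota^*$-image to vanish in the Eisenstein quotient unless $v_{\fp_1}(\fm) = v_{\fp_2}(\fm)$ on the relevant part of the support. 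Working through the eight monomials of $a$ and $b$, one verifies that every contribution dies except the constant term $a_0 = 1$ acting on $g_r^{(1)}$ and the ``diagonal'' part of the $a_{22} T_1^2 T_2^2$-term applied to $g_r^{(1)}$. This term-by-term check is the principal bookkeeping obstacle; the polynomials $a, b$ are engineered precisely so that every unwanted monomial meets the hypothesis of the $U(p)$-kernel lemma.

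Finally, the commutation relation $\theta_1^{k_1-j} V(\fp_1)^2 = p^{2(k_1-j)} V(\fp_1)^2 \theta_1^{k_1-j}$ (with its $\theta_2, V(\fp_2)$-analogue, and noting that $\theta_i$ commutes with $V(\fp_{3-i})$) lets me move $V(p^2)$ past the differential operators. A short computation then gives the factorisation, modulo the (irrelevant) kernel of a power of $U(p)$,
\[
(-1)^j \delta_1^{k_1-j}(h_1) - \delta_2^{k_2-j}(h_2) \equiv \bigl(1 - \alpha_1\beta_1\alpha_2\beta_2\, p^{-2-2j}\, V(p^2)\bigr)\!\bigl((-1)^j \delta_1^{k_1-j}(g_1^{(1)}) - \delta_2^{k_2-j}(g_2^{(1)})\bigr).
\]
After applying $\iota^*$ and $\Pi^{\mathrm{oc}}$, the scalar action of $V(p^2)$ on the Eisenstein functional converts the operator $(1 - \alpha_1\beta_1\alpha_2\beta_2 p^{-2-2j} V(p^2))$ into the scalar $(1 - p^{2j}/\alpha_1\beta_1\alpha_2\beta_2)$ appearing in the numerator of the claimed formula. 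The non-vanishing assertion is then immediate, since all the constant factors are nonzero and the pairing with $\widetilde\Eis$ corresponds to projection to the critical-slope Eisenstein quotient.
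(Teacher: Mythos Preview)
Your proposal is correct and follows essentially the same route as the paper: both arguments start from the reduction formula of \S5.1, realise $\tilde\eta$ via the star-product construction with the polynomials $a,b$, apply the $U(p)$-kernel lemma monomial-by-monomial to discard all contributions except those coming from $1 - \alpha_1\beta_1\alpha_2\beta_2\,T_1^2T_2^2$ in $a$, and then read off the stated formula. The only minor divergence is in the justification for why terms in $\ker U(p)^k$ die: the paper observes directly that such terms are \emph{exact} (since $U(p)$ is invertible on $H^1_{\rig,c-\partial}$), whereas you route this through the scalar action of $V(p)$ on the Eisenstein functional; the paper's phrasing is slightly cleaner, but the conclusion is the same.
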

   
   \begin{remark}
    Notice that all the products $\{ \alpha_1 \alpha_2, \dots, \beta_1 \beta_2\}$ have complex absolute value $p^{(k_1 + k_2 + 2)/2}$, which is strictly larger than $p^j$, so the Euler factors are all non-zero. 
   \end{remark} 
   
   This formula is not convenient in practice, since we do not have an explicit form for the preimages $(g_1^{(1)}, g_2^{(1)})$. If Conjecture \ref{conj:thetaimage} holds (e.g.~if $\cF$ is non-ordinary at $\fp_2$), we can take $g_1^{(1)} = \Theta_1^{-1}\left(\cF^{[\fp_1]}\right)$ and $g_2^{(1)} = 0$; then we can write the above formula in terms of Rankin--Cohen brackets, since
   \[ 
    (-1)^j \left(\Pi^{\mathrm{oc}} \circ \iota^* \circ \delta_1^{k_1 - j}\right)\left( \Theta_1^{-1}\cF^{[\fp_1]}\right) = \frac{(-1)^{k_1} (k_1 - j)!(k_2 - j)!}{(k_1 + k_2 - 2j)!} \left[\Theta_1^{-1}\cF^{[\fp_1]}\right]_{k_1 - j}
   \]
   modulo $\theta^{m+1}(S^{\dagger}_{-m}(N, L))$, by Proposition \ref{prop:ocprojectionbracket}(ii). However, following an idea of Fornea \cite{fornea17}, we can modify the above argument slightly so we still obtain a canonically-defined answer without needing to impose additional hypotheses, using the fact that although we do not have uniquely-determined antiderivatives of $\cF^{[\fp_1]}$ or $\cF^{[\fp_2]}$, by Proposition \ref{prop:fornea} we do have such an  antiderivative for $\cF^{[\fp_1, \fp_2]}$. This gives our main theorem:
   
   \begin{proof}[Proof of Theorem B]
    We replace the identity $P_p(T_1 T_2) = a(T_1, T_2) P_{\fp_1}(\cF, T_1) + b(T_1, T_2) P_{\fp_2}(\cF, T_2)$ with the slightly different identity
   \[ P_p(T_1 T_2) = (1 - \alpha_1 \beta_1 \alpha_2 \beta_2 T_1^2 T_2^2) P_{\fp_1}(T_1) P_{\fp_2}(T_2) + b(T_1, T_2) P_{\fp_2}(T_2) + b'(T_1, T_2) P_{\fp_1}(T_1),
   \]
   where $b'$ is the polynomial obtained from $b$ by interchanging the indices $1$ and $2$ throughout. Substituting in $V(\fp_i)$ for $T_i$, this gives us
   \[ P_p(\cF, V(p)) \cdot \cF = (1 - \alpha_1 \beta_1 \alpha_2 \beta_2 V(p)^2)\cF^{[\fp_1, \fp_2]} + b(V(\fp_1), V(\fp_2))\cF^{[\fp_2]} + b'(V(\fp_1), V(\fp_2))\cF^{[\fp_1]}.\]
   We use this to construct an integral of $P_p(\cF, V(p)) \cdot \cF$, as before. Using the fact that $b(T_1, T_2)$ contains only monomials with higher powers of $T_1$ than $T_2$, and vice versa for $b'(T_1, T_2)$, the integrals of the second and third terms become exact after pulling back to $\cY_{\QQ}$. This gives the formula
   \begin{multline*} 
    \left\langle \log\left(\mathrm{AF}^{[\cF, j]}_{\et}\right), \omega_{\cF}\right\rangle =  \frac{\left(1 - \tfrac{p^{2j}}{\alpha_1 \beta_1 \alpha_2 \beta_2}\right)}
   {\left(1 - \tfrac{p^j}{\alpha_1 \alpha_2}\right)\left(1 - \tfrac{p^j}{\alpha_1 \beta_2}\right)\left(1 - \tfrac{p^j}{\beta_1 \alpha_2}\right)\left(1 - \tfrac{p^j}{\beta_1 \beta_2}\right)}\cdot
   \frac{(-1)^{k_1} k_1! k_2!}{(k_1 + k_2 - 2j)!}\\
   \times \left\langle \widetilde\Eis^{k_1 + k_2 - 2j}_{\rig, N}, \left[ \Theta_1^{-1} \cF^{[\fp_1, \fp_2]}\right]_{k_1 - j}\right\rangle_{\rig}.\qedhere
   \end{multline*}
   \end{proof}
  \section{An example for $D = 13$}

   \subsection{The newform $\cF$}

    Let $F$ be the field $\QQ(\sqrt{13})$. Note that this field has narrow class number 1. We let $\sigma_1: F \into \mathbf{R}$ be the embedding corresponding to the positive square root, and $\sigma_2$ its conjugate.

    Using Demb\'el\'e's algorithms for computing Hilbert modular forms via Brandt matrices (cf.~\cite{dembele07}), which are implemented in Magma \cite{magma}, we find that there is a unique Hilbert modular form $\cF$ over $F$ of weight $(2, 8, 3, 0)$ and level 1, up to scalars. If we write $\mu(\fm)$ for the $\cT(\fm)$-eigenvalue of $\cF$, then the quantities $\mu(\fm)$ all lie in the field $F$ itself. For the first few prime values of $\fm$ the values of $\mu(\fm)$ are given by the following table:
    \[
     \begin{array}{|c|c|c|}
      \hline
      \text{prime $\fp$} & \operatorname{Nm}(\fp) & \text{$\cT(\fp)$-eigenvalue $\mu(\fp)$} \\
      \hline
      2 & 4 & -104 \\
      ( \sqrt{13} + 5 )/ 2 & 3 & -3 \sqrt{13} - 60 \\
      ( -\sqrt{13} + 5 )/ 2 & 3 & 3 \sqrt{13} - 60 \\
      5 & 25 & -11375 \\
      7 & 49 & -1368913 \\
      11 & 121 & -2664662 \\
      ( 3 \sqrt{13} + 13 )/ 2 & 13 & -3380 \\
      ( \sqrt{13} + 9 )/ 2 & 17 & -3744 \sqrt{13} - 15795 \\
      ( -\sqrt{13} + 9 )/ 2 & 17 & 3744 \sqrt{13} - 15795 \\
      19 & 361 & 556580414 \\
      \sqrt{13} + 6 & 23 & 9438 \sqrt{13} + 35100 \\
      -\sqrt{13} + 6 & 23 & -9438 \sqrt{13} + 35100 \\
      2 \sqrt{13} + 9 & 29 & 19860 \sqrt{13} - 84456 \\
     -2 \sqrt{13} + 9 & 29 & -19860 \sqrt{13} - 84456 \\
      \hline
     \end{array}
    \]
    (The left-hand column gives, for each ideal, the totally-positive generator having the smallest possible trace.) Notice that $\lambda(\fp)$ is always divisible by $\fp^3$, since $t_1 = 3$. Moreover, $\lambda(\sigma(\fm)) = \sigma(\lambda(\fm))$, where $\sigma$ is the Galois automorphism of $F$. (This can be used to speed up the computations somewhat, since it is not necessary to compute $\lambda(\fp)$ and $\lambda(\sigma(\fp))$ separately.) We normalise $\cF$ by setting $c(\fd^{-1}, \cF) = 1$ (this is different from the normalisation used in \cite{LLZ16}, but it makes the computations simpler). Then we have $c(\lambda, \cF) = \mu(\fd\lambda)$, and the values $\mu(\fm)$ for all $\fm$ are easily computed once one knows $\lambda(\fp)$ for each prime $\fp$.

    We set $p = 3$, and we embed $F$ in $\QQ_3$ using the embedding corresponding to the prime $\fp_1 = (\sqrt{13} + 5)/2$. Then $\cF$ is ordinary at $\fp_1$, but non-ordinary at $\fp_2$, since its $\cT(\fp_2)$-eigenvalue maps to $2\cdot 3 + 3^2 + 2\cdot 3^3 + \dots$. Hence $\cF^{[\fp_1]}$ is in the image of $\Theta_1$.

    For any $n \in \mathbf{N}$ the set $\{ \lambda \in \left(\fd^{-1}\right)^+: \Tr(\lambda) = n\}$ is finite (and easy to compute), so one can evaluate the $q$-expansion of the overconvergent elliptic modular form $\iota^*\left( \Theta_1^{-1} \cF^{[\fp_1]}\right)$ up to degree $N$ via the formula
    \[
     \iota^*\left( \Theta_1^{-1} \cF^{[\fp_1]}\right) = \sum_{n \ge 1} \left( \sum_{\substack{\lambda\in (\fd^{-1})^+\\ \Tr(\lambda) = n,\ \fp_1 \nmid \lambda}} \frac{c(\lambda, \cF)}{\sigma_1(\lambda)^4}\right) q^n.
    \]
    (Evaluating the $q$-expansion up to degree $N$ requires the computation of the $\lambda(\fp)$ for primes $\fp$ of norm up to $\frac{13}{4} N^2$.) Since $\cF$ has level 1, this must be an overconvergent 3-adic modular form of tame level 1 and weight 8. The theory does not seem to give any immediate bound for its radius of overconvergence; since $\cF^{[\fp_1]}$ is $r$-overconvergent for every $r < 1/4$, it seems likely that $\iota^*\left( \Theta_1^{-1} \cF^{[\fp_1]}\right)$ should also have this property, but we have not proved this. 
    
    \begin{remark}
     A similar result is sketched for elliptic modular forms in \cite[\S 2.3.2]{lauder14}, but the argument does not seem to generalise to this 2-dimensional setting.
    \end{remark}
    
   \subsection{A basis for overconvergent modular forms}

    Since $X_0(3)$ has genus 0, one has a convenient explicit presentation for the space $S^\dagger_k(1, r)$ of $r$-overconvergent 3-adic cusp forms, for any even integer $k \ge 2$. For $k = 8$ and any $r < \tfrac{p}{p+1} = \tfrac{3}{4}$, a Banach basis is given by the forms
    \[ \left(3^{\lfloor 6rn\rfloor} g^n \cdot E_8^{\mathrm{ord}}\right)_{n \ge 1} \]
    where $E_8^{\mathrm{ord}} = 1 - \frac{240}{1093}\sum_{n \ge 1} \left(\sum_{\substack{d \mid n, 3 \nmid d}} d^7 \right)q^n$ is the ordinary weight 8 Eisenstein series, and $g$ is the meromorphic modular function $(\Delta(3z) / \Delta(z))^{1/2}$, which gives an isomorphism $X_0(3) \cong \mathbf{P}^1$. See \cite{loeffler07} for further details. For the purposes of our example we will take $r = \tfrac{1}{6}$.

    The matrix $A$ of the Hecke operator $U(3)$ on $S^\dagger_k(1, r)$ in the above basis has been extensively studied by many authors (going back to work of Kolberg in the 1960s), and the entries satisfy a wealth of congruences and recurrence relations. Using these relations, one can verify that for $k = 8$ and $r = \tfrac{1}{6}$, the matrix entries $(a_{ij})_{1 \le i,j \le \infty}$ have the following two properties:
    \footnote{
     The first property is obvious. Let us sketch the proof of the second. It is convenient to extend the definition of $a_{ij}$ to allow $i = 0$ or $j = 0$ (which gives the matrix of $U(3)$ on the full space of overconvergent forms $M^\dagger_8(1, r)$). Then the operator $U(3)$ is an ``operator of rational generation'' in the sense of \cite{smithline04}: the generating function $\sum_{i, j \ge 0} a_{ij} X^i Y^j$ is a rational function. Explicitly, it is given by
     \[
      \frac{\left(\begin{split}
       1093 + 2106X - 2187X^2 - 230580XY - 34222176X^2Y - 40068XY^2 - 2449943010X^3Y 
        - 5959575X^2Y^2\qquad \\- 48920206932X^4Y - 304338546X^3Y^2 - 282300396318X^5Y - 1742595039X^4Y^2\end{split}\right)}
      {(1093 + 2106 X - 2187 X^2)(1  - 270XY - 8748X^2Y  - 108XY^2 - 59049X^3 Y - 729X^2 Y^2 - 9XY^3)}
     .\]
     Substituting $3^{-2}X$ and $Y$ in place of $X$ and $Y$ gives the rational function whose coefficients are $3^{-2i} a_{ij}$; and this function is easily seen to be a ratio of polynomials over $\ZZ$ whose constant terms are 3-adic units, so its power-series coefficients are in $\ZZ_3$. One can prove in the same way the slightly stronger bound $v_3(a_{i,3i-t}) \ge 2i + \tfrac{1}{2}t$, which is the optimal linear bound on $v_3(a_{ij})$.
    }
    \begin{itemize}
     \item If $j > 3i$ then $a_{ij} = 0$.
     \item For all $i,j$ we have $v_3(a_{ij}) \ge 2i$.
    \end{itemize}
    It follows that all entries of the matrix lie $3^2 \ZZ_3$, and for any $N \ge 1$, we have $a_{ij} = 0 \bmod 3^{2N}$ if $i \ge N$ or if $j \ge 3N-2$. For instance, modulo $3^{10}$ the only non-zero entries of the matrix are
    \[\left(\begin{array}{rrrrrrrrrrrrrrrrrr}
    48087 & 21195 & 9 \\
    4374 & 52488 & 51030 & 8019 & 14580 & 81 \\
     &  & 39366 &  & 6561 & 6561 & 15309 & 21870 & 729 \\
     &  &  &  &  &  &  &  &  & 39366 & 39366 & 6561
    \end{array}\right)
    .\]
    
  \subsection{Numerical linear algebra}
  
   \begin{definition}
    Let us say that an infinite matrix $A = (a_{ij})_{i, j \ge 1}$ over $\ZZ_p$ is \emph{computable} if, for every $N \ge 1$, there exists $R = R(N) \ge 0$ such that $v_p(a_{ij}) \ge N$ whenever $i > R$ or $j > R$, and there is an algorithm which, given an integer $N$, computes such a bound $R(N)$ and the values $a_{ij} \bmod p^N$ for all $1 \le i, j \le R(N)$.
   \end{definition}
   
   Via the theory of Newton polygons, one sees that if $A$ is computable, then the dimension of the slope $\le n$ subspace of $A$ (the sum of the generalised eigenspaces for all eigenvalues of valuation $\le n$) is a computable function of $n \in \ZZ_{\ge 0}$. 
   
   \begin{remark}
    More precisely, for each integer $r \ge 0$ let us define $c_r \in \ZZ_p$ to be $(-1)^r$ times the sum of the determinants of the $r \times r$ diagonal minors of $A$ (the trace of $\bigwedge^r A$), so that formally $\sum_{r \ge 0} c_r t^r = \text{``}\det(1 - tA)\text{''}$. Then the dimension of the slope $n$ subspace is equal to the total length of the edges of slope $n$ in the Newton polygon of $A$, which is the convex hull of the points $\{ (r, c_r) : r \ge 0\}$. Any vertex $(r, c_r)$ such that $c_r > rn$ will not affect the slope $n$ edges. However, it is easily seen that $v_p(c_r) \ge N(r - R(N))$ for any $N \ge 1$, and if $N > n$ then this is eventually larger than $rn$; so the set of $r$ such that $c_r \le rn$ is finite, and computable as a function of $n$. 
   \end{remark}
   
   We now define a ``condition number'' for non-zero eigenvalues of computable matrices. For simplicity, we suppose that the eigenvalue $\lambda$ is known exactly as an element of $\ZZ_p \cap \QQ^\times$, and that the $\lambda$-eigenspace is one-dimensional, as this is the case in all the examples we shall consider.
   
   \begin{definition}
    We define the \emph{condition number} of $\lambda$ to be the largest non-zero power of $p$ appearing as an elementary divisor of the $R(N) \times R(N)$ truncation of $(A - \lambda) \bmod p^N$, where $N > v_p(\lambda)$ is any integer sufficiently large that this truncation has exactly one elementary divisor which is zero (in $\ZZ/p^N \ZZ$).
   \end{definition}
    
   Note that the condition number is always at least $v_p(\lambda)$ (but it may be much larger). If $c$ is the condition number of $\lambda$, then the image modulo $p^{N}$ of the kernel of $(A - \lambda) \bmod p^{N+c}$ is free of rank 1 over $\ZZ / p^N \ZZ$. Since it must contain the mod $p^N$ reduction of the kernel of $A - \lambda$, these spaces must be equal. Thus we may calculate the mod $p^N$ reduction of the $\lambda$-eigenspace of $A$ by performing our calculations modulo $p^{N + c}$.
   
   We now apply this to our $U(3)$ example. As we saw in the previous section, the matrix of $U(3)$ in the Kolberg basis of $S^\dagger_8(1, \tfrac{1}{6})$ is computable (and it suffices to take $R(N) = 3 \lceil \tfrac{N}{2} \rceil - 3$). We find that the  slope $\le 7$ subspace is 2-dimensional, and hence must be spanned by the classical level 3 newform $q + 6q^2 - 27q^3 + \dots$ (whose $U(3)$-eigenvalue is $-27$, of slope 3) and the critical-slope Eisenstein series
   \[ E^{(8)}_{\mathrm{crit}} = \sum_{n \ge 1} \left( \sum_{d \mid n, 3 \nmid d} (n/d)^7\right) q^n = q + 129q^2 + 2187q^3 + 16513q^4 +\dots\]
   (whose $U(3)$-eigenvalue is $3^7$). In particular, both of these $U(3)$ eigenspaces are 1-dimensional, so the critical-slope Eisenstein series is \emph{not} a critical eigenform in the sense of \cite[Definition 2.12]{bellaiche11a}.
   
   We computed the matrix $A$ modulo $3^{20}$ (which is zero outside the top left $27\times27$ submatrix) and computed the Smith normal form of $A - 3^7$. The smallest non-zero elementary divisor of this matrix was $3^9$, so exactly 9 digits of $3$-adic precision were lost, and this computation determines the kernel of $A - 3^7$ modulo $3^{11}$: it is spanned by $(42041, 1, 54513, 21870, 0, 0, \dots)^t$. Up to a normalisation factor this is (of course) just the expansion of $E^{(8)}_{\mathrm{crit}}$ in the Kolberg basis. Much more interestingly, this computation also determines the kernel of $(A - 3^7)^t$, so we can use it to write down a non-zero linear functional factoring through projection to the critical-slope Eisenstein subspace. 
   
  \subsection{The result}
   
   We computed the Hecke eigenvalues of $\cF$ for all primes of norm up to $10,000$, which was sufficient to determine the first 55 coefficients of the form $h = \iota^*\left( \Theta_1^{-1} \cF^{[\fp_1]}\right)$ in the Kolberg basis of $S^\dagger_8(1, \tfrac{1}{6})$. These coefficients appeared to be tending rapidly to zero 3-adically; in fact the coefficient $b_n$ of the $n$-th basis vector appeared to have $p$-adic valuation growing approximately as $\tfrac{1}{2} n$ (supporting our conjecture that this form is $r$-overconvergent for all $r < 1/4$). However, since we have no precise bounds on the $b_n$, we have been forced to assume such a bound:
   
   \begin{conjecture}
    We have $v_3(b_n) \ge 10$ for all $n > 55$.
   \end{conjecture}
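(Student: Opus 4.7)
The strategy is to upgrade the qualitative overconvergence of $h \coloneqq \iota^*\left(\Theta_1^{-1} \cF^{[\fp_1]}\right)$, which is only known to be a $p$-adic modular form a priori, to a quantitative statement: namely, that $h$ is $r$-overconvergent for some explicit $r > 1/6$. Once one has this, the conjectured bound is automatic. Indeed, in the Kolberg presentation at parameter $r_0 = 1/6$, the expansion $h/E_8^{\ord} = \sum_{n\ge 1} b_n \cdot 3^n g^n$ converges on the annulus of radius $r$ if and only if $v_3(b_n) + n(1 - 6r) \to \infty$. For instance, proving $r$-overconvergence with $r = 1/5$ would give $v_3(b_n) \ge n/5 + O(1)$, which already implies $v_3(b_n) \ge 12$ for $n > 55$ once the implicit constant is pinned down from finitely many explicit values. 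So the plan is to establish such an $r$ and then combine it with the finite list of coefficients already computed to bootstrap to the precise bound in the conjecture.

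To prove quantitative overconvergence, the first step is to revisit Fornea's argument (Proposition \ref{prop:fornea}) and track radii explicitly. The double depletion $\cF^{[\fp_1, \fp_2]}$ is $r_1$-overconvergent on the Hilbert modular surface for every $r_1 < p/(p+1) = 3/4$, since it is a classical form modified by applying $1 - V(\fp_i) U(\fp_i)$ on each factor and these operators preserve large overconvergence. One then analyses the antiderivative: writing $\Theta_1^{-1}(\cF^{[\fp_1,\fp_2]}) = \lim_{n\to\infty} \theta_1^{p^n(p-1) - k_1 - 1}(\cF^{[\fp_1,\fp_2]})$ in the space of $p$-adic modular forms, and tracking the rate of convergence in the $r$-overconvergent norm, gives an effective bound on the radius of overconvergence of the antiderivative. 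The key point is that $\theta_1$ acts on a $\fp_1$-depleted form without shrinking its radius, because $\theta_1$ commutes with $\fp_1$-depletion on $q$-expansions. The pullback $\iota^*$ then transfers this to an explicit $r$-overconvergent form on $X_1(1)^{\rig}$, using that $\iota$ is a finite morphism of rigid spaces between the ordinary loci with quantifiable behaviour on strict neighbourhoods. The relation between $\cF^{[\fp_1]}$ and $\cF^{[\fp_1,\fp_2]}$ (differing only by $\cV(\fp_2)\cU(\fp_2)$ applied to the former) then allows one to transfer the bound back to $h$ itself.

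The hard part will be quantifying the convergence of $\theta_1^{p^n(p-1) - k_1 - 1}(\cF^{[\fp_1,\fp_2]})$: Fornea's argument uses finite-dimensional slope-decomposition arguments that produce an overconvergent antiderivative without tracking radii. One must instead argue directly that on the $r$-overconvergent locus, the difference between successive iterates $\theta_1^{p^n(p-1)-k_1-1}$ and $\theta_1^{p^{n+1}(p-1)-k_1-1}$ on the $\fp_1$-depleted form has norm bounded by a geometric factor depending on $r$. This reduces to a Koike-type estimate for the Atkin operator acting on nearly-overconvergent forms on the Hilbert modular surface, in the spirit of Kassaei's analytic continuation results, but adapted to the two-variable setting. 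As a fallback, if obtaining the full overconvergence rate proves too delicate, one can weaken the strategy by combining a modest theoretical bound (just strong enough to show $v_3(b_n) \to \infty$ super-linearly) with direct computation of $b_n$ for $n$ up to some moderate $N_0 > 55$; since the empirical growth appears to be $v_3(b_n) \approx n/2$, a bound like $v_3(b_n) \ge \tfrac{1}{3}(n - N_0)$ for $n > N_0$ would suffice, and leaves only a finite verification.
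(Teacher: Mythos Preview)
The statement you are trying to prove is explicitly labelled as a \emph{Conjecture} in the paper, and the paper does not prove it. The authors write that they ``have no precise bounds on the $b_n$'' and have ``been forced to assume such a bound''; the subsequent numerical conclusion is stated conditionally on this assumption. So there is no proof in the paper to compare against.

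Your proposal is a strategy outline, not a proof, and it contains a genuine gap at the key technical step. You assert that ``$\theta_1$ acts on a $\fp_1$-depleted form without shrinking its radius, because $\theta_1$ commutes with $\fp_1$-depletion on $q$-expansions.'' This reasoning is incorrect: agreement on $q$-expansions says nothing about preservation of overconvergence radii. In the elliptic case it is well known that $\theta$ does \emph{not} preserve overconvergence in general; Coleman's theorem tells you that $\theta^{k+1}$ lands back in the overconvergent space, but with a loss of radius that must be quantified separately. The passage to the limit $\theta_1^{p^n(p-1)-k_1-1}$ is exactly where the difficulty lies, and you have not supplied the required estimate. The paper itself flags this obstacle in the Remark immediately preceding the Conjecture: a quantitative overconvergence result of this type ``is sketched for elliptic modular forms in \cite{lauder14}, but the argument does not seem to generalise to this 2-dimensional setting.'' Your fallback plan (combine a weak theoretical bound with extended computation) is reasonable in spirit, but it still presupposes \emph{some} effective overconvergence estimate, which is precisely what is missing. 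Until such an estimate is established for the Hilbert-modular $\Theta_1^{-1}$, the Conjecture remains open.
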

   
   Under this conjecture, we find that the coefficient of $q$ in the critical-slope Eisenstein projection of $h$ is $3^{-2} + 3^{-1} + 2 + 3^1 + 2\cdot3^2 + 3^5 + 2\cdot3^6 + O(3^7)$. In particular, it is non-zero.

\providecommand{\bysame}{\leavevmode\hbox to3em{\hrulefill}\thinspace}
\providecommand{\MR}[1]{\relax}
\renewcommand{\MR}[1]{%
 MR \href{http://www.ams.org/mathscinet-getitem?mr=#1}{#1}.
}
\providecommand{\href}[2]{#2}
\newcommand{\articlehref}[2]{\href{#1}{#2}}

     \end{document}